\def\@begintheorem#1#2{\par\bgroup{\sc #1\ #2. }\it\ignorespaces}
\def\@opargbegintheorem#1#2#3{\par\bgroup{\sc #1\ #2\ (#3). } \it\ignorespaces}
\def\@endtheorem{\egroup}
\newtheoremstyle{theoremstyleslant}{6pt}{3pt}{\sl}{}{\bf}{.}{.5em}{}
\theoremstyle{theoremstyleslant}
\newtheorem{theorem}{Theorem}[section]
\newtheorem{corollary}[theorem]{Corollary}
\newtheorem{lemma}[theorem]{Lemma}
\newtheorem{proposition}[theorem]{Proposition}
\newtheorem{definition}[theorem]{Definition}
\def\defn#1{\emph{\color{blue} #1}}
\def\dist{\operatorname{dist}}
\def\diam{\operatorname{diam}}
\def\width{\operatorname{width}}
\def\Awidth{\operatorname{Awidth}}
\def\Bwidth{\operatorname{Bwidth}}
\def\prob{\operatorname{Pr}}
\def\Ln{\mathscr{L}} % generic layer family
\def\Vn{\mathscr{V}}
\def\Wn{\mathscr{W}}
\def\Xn{\mathscr{X}}
\def\Yn{\mathscr{Y}}
\def\Zn{\mathscr{Z}}
\def\L{\mathcal{L}} % generic layer family
\def\V{\mathcal{V}}
\def\W{\mathcal{W}}
\def\X{\mathcal{X}}
\def\Y{\mathcal{Y}}
\def\Z{\mathcal{Z}}
\def\A{\mathcal{A}}
\renewcommand{\qedsymbol}{$\blacksquare$}
\begin{document}

\title[Superlinear subset partition graphs]{Superlinear subset partition graphs with dimension reduction, strong adjacency, and endpoint count}

\author{Tristram C. Bogart}
\address{Departamento de Matem\'aticas, Universidad de los Andes, Carrera Primera \# 18A-12, Bogot\'a, D.C., Colombia}

\author{Edward D. Kim}
\address{Department of Mathematics, University of Wisconsin--La Crosse, 1725 State Street, La Crosse, WI 54601, USA}

\date{}

\keywords{diameter, Hirsch Conjecture, linear programming, polytopes}
\subjclass[2010]{05B40, 05C12, 52B05, 90C05}

\begin{abstract}
We construct a sequence of subset partition graphs satisfying the dimension reduction, adjacency, strong adjacency, and endpoint count properties whose diameter has a superlinear asymptotic lower bound. These abstractions of polytope graphs give further evidence against the Linear Hirsch Conjecture.
\end{abstract}

\maketitle

\section{Introduction}

Asymptotic bounds on the diameters of polytopes and polyhedra are relevant in understanding the efficiency of the simplex algorithm for linear optimization. The pivot steps of the algorithm form a path of adjacent vertices along the feasibility polyhedron of the linear optimization problem. The Hirsch Conjecture asserted that the maximal diameter $\Delta(n,d)$ among $d$-dimensional polytopes with $n$ facets is at most $n-d$. (For more on the Hirsch Conjecture, refer to~\cite{Klee:The-d-step-conjecture},~\cite{Klee:d-step}, or the survey~\cite{KimSantos:HirschSurvey}.) Though Santos disproved the Hirsch Conjecture for convex polytopes in~\cite{Santos:CounterexampleHirsch}, the true growth rate of $\Delta(n,d)$ is unknown.

The best known lower bound for $\Delta(n,d)$ is $\frac{21}{20}(n-d)$ if $d \geq 20$ by Matschke, Santos, and Weibel in~\cite{MatschkeSantosWeibel:Width5Prismatoids}. The best known upper bound for $\Delta(n,d)$ as $d$ varies is the recent bound $(n-d)^{\log_2 d}$ by Todd in~\cite{Todd:KalaiKleitman}, improving on the previous upper bound $n^{1+\log_2d}$ of Kalai and Kleitman in~\cite{Kalai:Quasi-polynomial}. If the dimension $d$ is fixed, the diameter is linear in the number $n$ of facets. More specifically, if $d$ is fixed, the upper bound $\Delta(n,d) \leq \frac132^{d-3}(n-d+\frac52)$ by Barnette in~\cite{Barnette} is linear in $n$, which improved on the previous upper bounds $\Delta(n,d) \leq n2^{d-3}$ by Larman in~\cite{Larman} and $\Delta(n,d) \leq n3^{d-3}$ by Barnette in~\cite{Barnette:UpperBound}. Though $\Delta(n,d)$ is linear in $n$ when $d$ is fixed, the question of knowing whether $\Delta(n,d)$ is linear in $n$ as $d$ increases remains open. In addition to this so-called Linear Hirsch Conjecture, the Polynomial Hirsch Conjecture (which asserts that $\Delta(n,d)$ is polynomial in $n$ as $d$ increases) is also open.

In~\cite{Eisenbrand:Diameter-of-Polyhedra}, Eisenbrand et al.{} revived the study of diameters of abstractions of polytope graphs which were pioneered in the 1970s by Adler, Dantzig, Murty, and others (see~\cite{Adler:AbstractPolytopesThesis}, \cite{Adler:LowerBounds}, \cite{AdlerDantzigMurty:AbstractPolytopes}, \cite{Adler:MaxDiamAbsPoly}, \cite{AdlerSaigal:LongPathsAbstract}, \cite{Kalai:DiameterHeight}, \cite{Lawrence}, \cite{Murty:GraphAbstract}). Adler analyzed diameters of the so-called abstract polytopes. The key construction of Eisenbrand, H\"ahnle, Razborov, and Rothvo\ss{} in~\cite{Eisenbrand:Diameter-of-Polyhedra} is a connected layer family whose diameter is almost quadratic in $n$. The defining feature of a connected layer family is the connectedness property, which is inspired by the fact that the graphs of faces of a polyhedron are themselves graphs of polyhedra and thus connected.

In~\cite{Kim:PolyhedralGraphAbstractions}, Kim introduced this property to the setting of arbitrary graphs where it is called dimension reduction on a subset partition graph. Two additional properties found in the earlier abstractions are also introduced and a construction in~\cite{Kim:PolyhedralGraphAbstractions} showed that subset partition graphs satisfying these two properties have superlinear diameter. The first of these additional properties is adjacency, which is inspired by the fact that $d-1$ facets of a $d$-polytope determine an edge, thus the vertices at its endpoints should be adjacent. The second of these properties is the endpoint count property, which in terms of polytopes says that the number of vertices on an edge is two in a polytope and at most two in a polyhedron. A strengthening of adjacency is strong adjacency, which captures the fact that if two vertices of a polytope are adjacent, then they share $d-1$ facets in common.

In~\cite{Haehnle:ConstructingSPGs}, H\"ahnle gave a construction of subset partition graphs satisfying strong adjacency and endpoint count whose diameter is exponential in $n$. The main unresolved question in~\cite{Kim:PolyhedralGraphAbstractions} and H\"ahnle's discussion in~\cite{Haehnle:ConstructingSPGs} is how the diameter of subset partition graphs grows in $n$ when the dimension reduction, adjacency, and endpoint count properties are all satisfied. It would be even more interesting to ascertain that the diameter is superlinear when dimension reduction, strong adjacency, and endpoint count are all satisfied. This paper answers that question in the affirmative, starting with the superlinear construction from~\cite{Eisenbrand:Diameter-of-Polyhedra} and the doubling operation of H\"ahnle in~\cite{Haehnle:ConstructingSPGs}. The resulting subset partition graph satisfies all the desired properties and has almost quadratic diameter. This is additional evidence that $\Delta(n,d)$ may not be linear in $n$ and provides an answer to a question posed in~\cite{Eisenbrand:Diameter-of-Polyhedra},~\cite{Haehnle:ConstructingSPGs}, and~\cite{Kim:PolyhedralGraphAbstractions}.

We stress that the construction in~\cite{Eisenbrand:Diameter-of-Polyhedra} gives the \emph{first} superlinear construction with the crucial property of dimension reduction. The construction in H\"ahnle's article~\cite{Haehnle:ConstructingSPGs}, Corollary 2.12 in~\cite{Santos:RecentProgress} by Santos, and Theorem 4.4 in~\cite{Kim:PolyhedralGraphAbstractions} by Kim all give constructions with exponential diameter with the other properties but without dimension reduction. Theorem~\ref{theorem:2ndZsummary} gives the first superlinear construction with dimension reduction and all other major properties identified in the pioneering work by Adler, Dantzig, Murty et al.{} (see~\cite{Adler:AbstractPolytopesThesis}, \cite{Adler:LowerBounds}, \cite{AdlerDantzigMurty:AbstractPolytopes}, \cite{Adler:MaxDiamAbsPoly}, \cite{AdlerSaigal:LongPathsAbstract}, \cite{Kalai:DiameterHeight}, \cite{Lawrence}, \cite{Murty:GraphAbstract}). Inasmuch as subset partition graphs with all of these properties model polyhedra and polytopes, Theorem~\ref{theorem:2ndZsummary} is the best evidence thus far against the Linear Hirsch Conjecture.

In Section~\ref{section:definitions}, we introduce definitions which will be relevant throughout the paper, along with some constructions of subset partition graphs arising from the graphs of polytopes. In Section~\ref{section:Eisenbrand1}, we start with the first connected layer family of Eisenbrand et al.~in~\cite{Eisenbrand:Diameter-of-Polyhedra} to obtain a subset partition graph satisfying dimension reduction, strong adjacency, and endpoint count. Both the starting connected layer family and the resulting subset partition graph have diameter sublinear in $n$. Using similar but more elaborate constructions, in Section~\ref{section:Eisenbrand2}, we start with the second connected layer family of Eisenbrand et al.{} in~\cite{Eisenbrand:Diameter-of-Polyhedra} to obtain a subset partition graph satisfying dimension reduction, strong adjacency, and endpoint count. The resulting graph has superlinear diameter. This key result is summarized in Theorem~\ref{theorem:2ndZsummary}. Appendix~\ref{section:separation-lemmas} contains several lemmas on cardinalities of random sets that are needed in our constructions.

\section{Definitions and Preliminary Constructions}\label{section:definitions}

Terminology on polytopes as used in this manuscript can be found in~\cite{Grunbaum:Polytopes} or~\cite{Ziegler:Lectures}.
Many of the following definitions and notation are taken from~\cite{Kim:PolyhedralGraphAbstractions} and H\"ahnle's article~\cite{Haehnle:ConstructingSPGs}. 

\begin{definition}
Let $[n] = \{1,2,\dots,n\}$ be a set of \defn{symbols} and fix a set $\A \subset \binom{[n]}{d}$ of $d$-subsets. Let $\V = \{V_0,\dots,V_t\}$ be a partition of $\A$; that is, $\bigcup_{i=0}^t V_i = \A$, $V_i \neq \emptyset$ for all $i$, and $V_i \cap V_j = \emptyset$ for all $i \neq j$. A \defn{subset partition graph} $G$ is any (simple) connected graph whose vertex set is $\V$.    
\end{definition}
Given a subset partition graph $G$, we denote its diameter by $\delta(G)$. Define the \defn{Hirsch ratio} of $G$ to be~$\frac{\delta(G)}{n}$. We will be interested in the following properties that subset partition graphs may have:
\begin{itemize}
\item \textbf{dimension reduction:} If $F \subseteq [n]$ and $\left| F \right| \leq d-1$, then the induced subgraph $G_F$ on the vertices that contain a set $A$ with $A \subseteq F$ is connected.
\item \textbf{adjacency:} If $A,A' \in \A$ and $\left| A \cap A' \right| = d-1$, then $A$ and $A'$ are in the same or adjacent vertices of $G$.
\item \textbf{strong adjacency:} The graph $G$ satisfies adjacency, and in addition the following property: if $V$ and $V'$ are adjacent vertices in $G$, then there exist $A \in V$ and $A' \in V'$ such that $\left| A \cap A' \right| = d-1$. 
\item \textbf{endpoint count:} If $F \in \binom{[n]}{d-1}$, then $F$ is contained in at most two sets $A \in \A$.
\end{itemize}

We define a \defn{layer family} to be a subset partition graph whose underlying graph is a path and a \defn{layer} to be the collection of sets associated to an individual vertex of the path. A \defn{connected layer family} is a layer family that satisfies dimension reduction.

Following~\cite{GoreskiMacPherson} and~\cite{NovikSwartz}, a pure simplicial complex $\Delta$ is a pseudomanifold if each codimension-one face of $\Delta$ is contained in exactly two facets. In particular, there is no assumption that $\Delta$ is strongly connected.
In~\cite{Santos:RecentProgress}, Santos identifies the pseudomanifold property as a pure simplicial complex where each codimension-one face is contained in at most two facets. Under this definition, a subset partition graph whose set $\A$ satisfies endpoint count (such as our final construction in Theorem~\ref{theorem:2ndZsummary}) is a pseudomanifold.
As Santos indicates in~\cite{Santos:RecentProgress}, it is interesting to ascertain if $\Delta$ is normal, that is, is the star of every simplex in $\Delta$ connected. Though we suspect the complex $\A$ in our construction for Theorem~\ref{theorem:2ndZsummary} is not normal, this remains open.

The remainder of this section presents a collection of constructions of subset partition graphs related to the graphs of polytopes. Though these statements are not referenced in the subsequent sections, the constructions provide polytopal motivation for studying subset partition graphs. We start by recalling a remark from~\cite{Kim:PolyhedralGraphAbstractions} which will be needed later:

\begin{proposition}[\cite{Kim:PolyhedralGraphAbstractions}]\label{proposition:propertypreservation} The dimension reduction, adjacency, and endpoint count properties are all preserved under edge addition and edge contraction.\hfill\qedsymbol
\end{proposition}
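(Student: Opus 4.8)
The plan is to verify each of the three properties directly from its definition, checking what happens when we add a single edge to a subset partition graph $G$ and, separately, when we contract a single edge; the general case then follows by induction on the number of edges added or contracted. Note that neither operation changes the symbol set $[n]$, the set $\A$, or the partition $\V$ of $\A$ into vertices; edge addition leaves the vertex set untouched and only enlarges the edge set, while edge contraction of an edge $\{V,V'\}$ replaces the two vertices $V$ and $V'$ by a single new vertex whose associated set of $d$-subsets is $V \cup V'$, with its neighborhood being the union of the neighborhoods of $V$ and $V'$.

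For \textbf{edge addition}, the key observation is that every property in question is of the form ``a certain induced subgraph is connected'' (dimension reduction) or ``certain pairs of vertices are equal or adjacent'' (adjacency) or ``a purely set-theoretic condition on $\A$'' (endpoint count). Adding an edge can only add edges to any induced subgraph $G_F$, so connectivity of $G_F$ is preserved; adjacency of a pair of vertices is likewise preserved since we never delete edges; and endpoint count refers only to $\A$, which is unchanged. So edge addition is essentially immediate.

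For \textbf{edge contraction} of $\{V,V'\}$ to a new vertex $W = V\cup V'$, the argument is slightly more delicate but still routine. For dimension reduction, fix $F\subseteq[n]$ with $|F|\le d-1$ and consider $G_F$: if neither $V$ nor $V'$ lies in $G_F$, the contracted graph's $G_F$ equals the old one; if at least one of them does, then $W$ lies in $G_F$ (since $W\supseteq V$ and $W\supseteq V'$, so $W$ contains some $A\subseteq F$ whenever $V$ or $V'$ does), and the new $G_F$ is obtained from the old one by the same contraction (or by deleting one of $V,V'$ and relabeling), which preserves connectivity because contracting an edge of a connected graph yields a connected graph, and a connected graph with a vertex identified still connected. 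For adjacency, suppose $A,A'\in\A$ with $|A\cap A'|=d-1$; in $G$ they lie in the same or adjacent vertices. If both lay in $\{V,V'\}$ they now both lie in $W$; if one lay in $V$ or $V'$ and the other in some third vertex adjacent to it, then that third vertex is adjacent to $W$ in the contracted graph; and if the adjacency in $G$ did not involve $V$ or $V'$ at all, it is untouched. Endpoint count is again a condition on $\A$ alone and so is trivially preserved.

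I do not expect a serious obstacle here; the one point that requires a moment's care is the dimension reduction case under contraction, specifically making sure that identifying the two vertices $V$ and $V'$ (which may or may not both appear in a given $G_F$) does not disconnect $G_F$ --- but since contraction never disconnects a graph, and since $W$ appears in $G_F$ exactly when $V$ or $V'$ did, the bookkeeping works out. It is worth remarking explicitly that strong adjacency is \emph{not} claimed to be preserved (the statement only lists dimension reduction, adjacency, and endpoint count): contracting an edge $\{V,V'\}$ where the adjacency of $V$ and $V'$ was witnessed by sets $A\in V$, $A'\in V'$ with $|A\cap A'|=d-1$ can destroy a neighboring witness, so strong adjacency genuinely requires separate treatment, which is presumably why it is omitted.
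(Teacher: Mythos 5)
The paper itself gives no proof of this proposition; it is quoted from the reference as a ``remark'' and stamped with a $\qedsymbol$, so there is nothing in the text to compare against line by line. Your direct verification is correct and is the natural argument: for edge addition, dimension reduction only adds edges to each induced subgraph $G_F$, adjacency is a requirement that can only become easier to satisfy, and endpoint count is a condition on $\A$ alone; for edge contraction, the merged vertex $W = V \cup V'$ inherits membership in $G_F$ from either constituent, the neighborhoods are unioned so connectivity and adjacency cannot be lost, and again endpoint count never sees the graph structure. The induction-on-one-step framing is fine.

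Your closing remark about strong adjacency has the mechanism backwards, though. As the paper observes in the sentence immediately following the proposition, the ``witness'' half of strong adjacency (that adjacent vertices contain $d$-sets meeting in $d-1$ elements) \emph{is} preserved under edge contraction: if $\{W,U\}$ is an edge after contracting $\{V,V'\}$ to $W$, it came from an old edge $\{V,U\}$ or $\{V',U\}$, and the witnessing pair $A \in V \subseteq W$, $A' \in U$ carries over because $W$ is the union $V \cup V'$. What actually fails is edge \emph{addition}: a freshly inserted edge need not come equipped with any witnessing pair. So strong adjacency is omitted from the proposition because of addition, not because of contraction. This does not affect the rest of your argument, which only concerns the three listed properties.
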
 

Trivially, the property that adjacent vertices contain $d$-sets intersecting in $d-1$ elements is preserved under edge contraction but not edge addition. In contrast, the property that each vertex has precisely one $d$-set is preserved under edge addition but not under edge contraction. 

The motivation for these properties is to study graphs of polytopes. Let $P$ be a simple $d$-polytope whose facets are indexed by $[n]$. Define two subset partition graphs as follows: 

\begin{enumerate}
\item For each vertex $v$ of $P$, define the set 
\[A_v = \{i \in [n]: \text{ facet } i \text{ contains } v\}\]
and let $V_v = \{A_v\}$. Define a graph $G_P$ with vertex set $\{V_v\}$ and edges $\{V_v, V_w\}$ whenever $\{v,w\}$ is an edge of $P$. That is, if we forget the subsets then $G_P$ is isomorphic to the graph of $P$ itself. This is the simplicial complex dual to $P$, with facets of the form $A_v$.
\item Fix a vertex $u$ of $P$. For each vertex $v$ of $P$, define $A_v$ as above. Let $V_i = \{A_v: \dist(u,v) = i\}$. Define $G_P^u$ to be the path whose vertices are (in order) $V_0,V_1,\dots,V_k$, where $k$ is the largest distance from $u$ to any vertex of $P$. This subset partition graph motivates the connected layer families defined by Eisenbrand et al.{} in~\cite{Eisenbrand:Diameter-of-Polyhedra}.
\end{enumerate} 

We observe that $G_P$ has the dimension reduction, strong adjacency, and endpoint count properties, vertex consists of exactly one $d$-set, and that its diameter equals the diameter of $P$. In particular, $G_P$ is a pseudomanifold in the sense of~\cite{Santos:RecentProgress}. Furthermore, we can obtain $G_P^u$ from $G_P$ as follows. First, if $v,w$ are two vertices such that $\dist(u,v) = \dist(u,w)$ and such that $v, w$ are not already adjacent, then add the edge $\{v,w\}$. Second, contract all edges $\{v,w\}$ for which $\dist(u,v) = \dist(u,w)$. Thus $G_P^u$ also has dimension reduction, adjacency, and endpoint count by Proposition~\ref{proposition:propertypreservation}. In addition to adjacency, the subset partition graph $G_P^u$ also satisfies strong adjacency because the added edges that might violate this property are all contracted away. Obviously, $G_P^u$ fails the property of unit vertex cardinality on nearly every vertex. Its diameter may be strictly less than the diameter of $G$, but the diameters will be equal if $u$ is chosen appropriately. 

This construction works more generally as follows, and thus our constructions in the following sections focus on layer families:
\begin{lemma} 
Let $G$ be a subset partition graph with symbol set $[n]$ and dimension $d$. Then there exists a subset partition graph $G'$ with symbol set $[n]$ and dimension $d$ satisfying the following.
\begin{enumerate}
\item The underlying graph of $G'$ is a path. 
\item The diameter (edge length) of $G'$ equals the diameter of $G$. 
\item If $G$ satisfies any or all of the dimension reduction, adjacency, strong adjacency, and endpoint count properties, then $G'$ satisfies the same properties. 
\end{enumerate} 
\label{lemma:topath}
\end{lemma}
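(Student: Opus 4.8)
The plan is to mimic, for an arbitrary subset partition graph, the passage from $G_P$ to $G_P^u$ described just before the lemma, taking care to choose the basepoint so that no distance is lost. Fix vertices $u$ and $w$ of $G$ realizing the diameter, $\dist(u,w) = \delta(G)$, let $k$ be the largest distance in $G$ from $u$ to a vertex, and for $i = 0,1,\dots,k$ let $V_i$ be the union of all vertices of $G$ at distance exactly $i$ from $u$. Since $G$ is connected, a vertex at distance $i>0$ has a neighbor at distance $i-1$, so every value $0,1,\dots,k$ is attained and each $V_i$ is nonempty; and the $V_i$ are obtained by grouping the blocks of the partition $\V$ according to distance from $u$, so $\{V_0,\dots,V_k\}$ is again a partition of $\A$. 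Let $G'$ be the path $V_0 - V_1 - \cdots - V_k$. Because $\A$ is unchanged, $G'$ has the same symbol set $[n]$ and the same dimension $d$.

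Next I would observe that $G'$ is exactly what one obtains from $G$ by first adding every edge joining two distinct vertices at equal distance from $u$, and then contracting every edge joining two vertices at equal distance from $u$ (discarding the resulting loops and identifying parallel edges). An edge of $G$ joins vertices whose distances from $u$ differ by at most one, so after these operations the only surviving edges run between consecutive layers $V_i$ and $V_{i+1}$; conversely each such pair is genuinely joined, since the penultimate vertex on a shortest $u$-to-$W$ path with $\dist(u,W)=i+1$ lies in $V_i$ and is adjacent to a vertex in $V_{i+1}$. Hence the quotient graph really is the path above. Dimension reduction, adjacency, and endpoint count are preserved because these are edge additions and edge contractions, to which Proposition~\ref{proposition:propertypreservation} applies. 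For the diameter, $G'$ is a path with $k$ edges, so $\delta(G') = k$; every such eccentricity is at most the diameter, so $k \le \delta(G)$, while $k \ge \dist(u,w) = \delta(G)$ by the choice of $u$, giving $\delta(G') = \delta(G)$ and establishing (1) and (2).

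The one property not handled by Proposition~\ref{proposition:propertypreservation} is strong adjacency, and I expect this to be the step needing a separate (short) argument. Suppose $G$ satisfies strong adjacency and let $\{V_i,V_{i+1}\}$ be an edge of $G'$. By the description above it arises from an edge $\{V,V'\}$ of $G$ with $V \subseteq V_i$ and $V' \subseteq V_{i+1}$; strong adjacency of $G$ supplies $A \in V$ and $A' \in V'$ with $\left| A \cap A' \right| = d-1$, and since $A \in V_i$ and $A' \in V_{i+1}$ this is precisely the witness required for the edge of $G'$. The key point is that contraction creates no new edges and the edges added in the first step are exactly the ones contracted away, so no edge of $G'$ can fail the property; combined with adjacency (already covered) this shows $G'$ satisfies strong adjacency, completing (3).
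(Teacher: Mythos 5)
Your proof is correct and follows the same route as the paper's proof: pick a vertex $u$ realizing the diameter, stratify all vertices of $G$ by distance from $u$, add edges inside each stratum, and contract each stratum to a point, so that the result is a path whose length equals the eccentricity of $u$, which by the choice of $u$ is the diameter; then invoke Proposition~\ref{proposition:propertypreservation} for dimension reduction, adjacency, and endpoint count.

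One small remark in your favor: the paper's own proof of this lemma only appeals to Proposition~\ref{proposition:propertypreservation} and never explicitly addresses strong adjacency, even though it appears in the statement (the relevant observation is buried in the discussion of $G_P^u$ preceding the lemma). Your final paragraph supplies exactly the missing step: each edge $\{V_i,V_{i+1}\}$ of $G'$ descends from a genuine edge $\{V,V'\}$ of $G$ crossing the two strata (the newly added edges live inside strata and are contracted away), so the witnessing pair $A \in V$, $A' \in V'$ with $\lvert A \cap A' \rvert = d-1$ furnished by strong adjacency of $G$ survives and witnesses strong adjacency of $G'$. This is the right argument and makes your version strictly more complete than the one in the paper.
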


\begin{proof}
Choose a vertex $u$ of $G$ whose distance to some other vertex equals $\delta(G)$. For each pair of nonadjacent vertices $v, w$ such that $\dist(u,v)$ = $\dist(u,w)$, add an edge from $v$ to $w$. This does not change the distance of any vertex from $u$; in particular, the diameter still equals $\delta(G)$. For all $0 \leq k \leq \delta(G)$, the set $V_k$ of vertices at distance $k$ from $u$ is now a clique. Contract edges to turn this clique into a single vertex $\V_k$. The result is a path of length $\delta(G)$. If $G$ has dimension reduction, adjacency, and/or endpoint count, then so does $G'$ by~\ref{proposition:propertypreservation}.
\end{proof}

Alternatively, we can preserve most of the properties while splitting vertices instead of merging them. The operation is not a true inverse to the previous one: applying it to the connected layer family of the graph of a polytope does not produce the original graph, but instead yields a graph with many extra edges. In addition, the operation does not in general preserve strong adjacency.

\begin{lemma}
Let $G$ be a subset partition graph with symbol set $[n]$ and dimension $d$. Then there exists a subset partition graph $G''$ with symbol set $[n]$ and dimension $d$ satisfying the following.
\begin{enumerate}
\item Each vertex $V \in \V$ of $G''$ consists of exactly one $d$-set.
\item The diameter of $G''$ equals the diameter of $G$. 
\item If $G$ satisfies any or all of the dimension reduction, adjacency, and endpoint count properties, then $G''$ satisfies the same properties. 
\end{enumerate} 
\label{lemma:tosingleton}
\end{lemma}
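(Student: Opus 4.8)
The plan is to let $G''$ be obtained from $G$ by \emph{splitting} each vertex into singletons: replace each vertex $V \in \V$ by a clique on the $|V|$ vertices $\{A\}$, $A \in V$, and for every edge $\{V,W\}$ of $G$ join every singleton arising from $V$ to every singleton arising from $W$. (Equivalently, $G''$ is the blow-up of $G$ in which $V$ is blown up to a clique of size $|V|$.) The vertex set of $G''$ is then $\{\{A\} : A \in \A\}$, which partitions $\A$ into singletons, so (1) holds by construction, $G''$ is simple, and $G''$ is connected because $G$ is connected and every blown-up part is nonempty. We may assume $\delta(G) \geq 1$; if $G$ were a single vertex containing at least two $d$-sets then no graph on singletons could have diameter $0$, so this mild restriction is implicit in the statement.

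For (2), I would verify that for $A \in V$ and $A' \in W$ one has $\dist_{G''}(\{A\},\{A'\}) = \dist_G(V,W)$ when $V \neq W$, and $\dist_{G''}(\{A\},\{A'\}) \le 1$ when $V = W$. The inequality ``$\le$'' follows by lifting a shortest $V$--$W$ path of $G$ to a path of the same length in $G''$; the inequality ``$\ge$'' follows by projecting a shortest $G''$-path onto $G$, sending each singleton to the vertex of $G$ containing it, observing that consecutive singletons on the path lie in equal or adjacent vertices of $G$, and deleting repetitions to obtain a $V$--$W$ walk of no greater length. Maximizing over all pairs of singletons and using $\delta(G) \ge 1$ then gives $\delta(G'') = \delta(G)$.

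For (3): endpoint count involves only $\A$, which is unchanged, so it is immediate. For adjacency, if $A, A' \in \A$ with $\left|A \cap A'\right| = d-1$, then in $G$ the sets $A$ and $A'$ either lie in a common vertex, in which case $\{A\}$ and $\{A'\}$ lie in a common clique of $G''$, or in two adjacent vertices, in which case they are joined by one of the added complete bipartite edge sets; either way $\{A\}$ and $\{A'\}$ are equal or adjacent in $G''$. For dimension reduction, fix $F$ with $\left|F\right| \le d-1$: the induced subgraph $G''_F$ on the singletons $\{A\}$ with $F \subseteq A$ is exactly the blow-up of $G_F$, each vertex $V$ of $G_F$ being blown up to the nonempty clique of those $\{A\}$ with $A \in V$ and $F \subseteq A$, with two such cliques completely joined precisely when the corresponding vertices are adjacent in $G$, hence in $G_F$. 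Since dimension reduction for $G$ makes $G_F$ connected and every part is nonempty, $G''_F$ is connected.

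Finally, strong adjacency need not survive, and I do not claim it: an added edge $\{A\}$--$\{A'\}$ between cliques coming from adjacent vertices $V, W$ of $G$ is justified only by the existence of \emph{some} pair in $V \times W$ meeting in $d-1$ symbols, not by $A, A'$ themselves. I expect the only mildly delicate point to be the dimension-reduction clause — one must blow up each vertex to a \emph{clique} (a path would disconnect $G''_F$ when the $F$-containing sets are non-consecutive, and would also inflate the diameter), and one must confirm that no edges appear in $G''$ between cliques coming from non-adjacent vertices of $G$. The diameter identity, though routine, likewise hinges on the projection argument to exclude shortcuts through the newly created cliques.
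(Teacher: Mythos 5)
Your proposal is correct and is essentially identical to the paper's proof: the paper likewise blows up each vertex $V$ into a clique of singletons $W_{V,1},\dots,W_{V,\#(V)}$, joins cliques of adjacent vertices completely, handles the diameter by the same lift/project argument (noting as you do that $G$ needs at least one edge), and verifies the three properties in the same way, observing that strong adjacency is not preserved. No discrepancies worth flagging.
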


\begin{proof}
To form $G''$, replace each vertex $V$ of $G$ by a clique of vertices $W_{V,1}, \dots W_{V,\#(V)}$ that each contain a single set $A \in V$. For each edge $\{U,V\}$ in $G$, connect $W_{V,i}$ to $W_{U,j}$ for all $i$ and $j$. Now $\dist(W_{V,i}, W_{V,j}) = 1$ for any vertex $V$ and any indices not equal to $j$, and $\dist(W_{U,i},W_{V,j})$ in $G''$ equals $\dist(U,V)$ in $G$ for any vertices $U \neq V$ and any indices $i$ and $j$. So as long as $G$ has at least one edge, we have $\diam(G'') = \diam(G)$. 

We now show this construction preserves each of the three key properties. 
\begin{enumerate}
\item Suppose $G$ has dimension reduction. Let $F \subseteq [n]$. The vertices $V_1, \dots, V_k$ that contain a set containing $F$ form a connected subgraph $H$ of $G$. Then the vertices of $G''$ whose set contains $F$ are of the form $W_{V_i,j}$ with at least one valid choice of $j$ for each $1 \leq i \leq k$. These vertices induce a subgraph $H''$ of $G''$ whose structure is obtained by replacing each vertex of $H$ by a clique of size at least one. So $H''$ is connected. 
\item Suppose $G$ has adjacency. Let $F \subseteq [n]$ with $\left| F \right| = d-1$. Suppose $A$ and $A'$ are $d$-sets that contain $F$ and appear in $G''$. Then $A$ and $A'$ also occur in $G$, and by hypothesis appear in the same or adjacent vertices of $G$. Then by construction, $A$ and $A'$ appear in adjacent vertices of $G''$.   
\item Suppose $G$ has endpoint count. Since the sets $A$ that appear in $G''$ are the same as those that appear in $G$, $G''$ also has endpoint count. 
\end{enumerate}  
Of course, the reason that strong adjacency is not preserved above is that a subset partition graph with strong adjacency where each vertex has cardinality one must be the graph $G$ itself. 
\end{proof}

Since both these constructions preserve the number of symbols and the diameter, they preserve the Hirsch ratio. Thus in constructing graphs that satisfy adjacency, endpoint count, and dimension reduction, we may at any point choose to work either with paths or with graphs satisfying the property that each vertex contains precisely one set. In the following sections we will work only with paths.  

\section{Modifying the first Eisenbrand et al. Construction}\label{section:Eisenbrand1}

In this section, we construct a layer family with all desired properties based on the first construction in~\cite{Eisenbrand:Diameter-of-Polyhedra}. Our construction will define layer families of dimension $d$ and of dimension $2d$ at the same time: layer families of dimension $d$ will be defined on the symbol set $[n]$ while layer families of dimension $2d$ will be defined on the symbol set $[2] \times [n]$. We can represent subsets of $[2] \times [n]$ as two-row diagrams of boxes as in~\cite{Haehnle:ConstructingSPGs}. As much as it is possible, we use $C$ and $E$ to denote subsets of $[n]$, and use $S$ and $T$ to denote subsets of $[2] \times [n]$. Given a set $C$, we define the doubling operation $D : 2^{[n]} \rightarrow 2^{[2] \times [n]}$ as $D(C) = [2] \times C$. Define the vertical projection $\tilde{\pi}: [2] \times [n] \to [n]$ by $\pi((1,i)) = \pi((2,i)) = i$, and as a pseudo-inverse to $D$, define $\pi$ on a subset $S \subseteq [2] \times [n]$ to be the direct image of $S$ under $\tilde{\pi}$. For $S \subseteq [2] \times [n]$, we define $\width(S) = \# (\pi(S))$. 

A set $C \subseteq [n]$ is \defn{active} on layer $\L_k$ if there exists a $d$-set $E \in \L_k$ with $C \subseteq E$. We define the following  properties for a layer family $\Ln = (\L_1,\dots,\L_\ell)$, to be used here and in Section~\ref{section:Eisenbrand2}.  
\begin{itemize} 
\item \textbf{covering:} Let $C$ be a subset of $[n]$ with $\left| C \right| \leq d-1$. Then $C$ is active on every layer of $\Ln$.
\item \textbf{$m$-covering:} Let $C$ be a subset of $[n]$ of size at most $d-1$. Then for every layer $\L_k$ of $\Ln$, there exist at least $m$ different $d$-sets $E_1, \dots, E_m \in \L_k$ such that $C \subseteq E_i$.
\item \textbf{completeness:} If $C$ is a $d$-subset of $[n]$, then $C \in \L_k$ for some $k$. 
\item \textbf{linkage:} For every pair of adjacent layers $\L_k, \L_{k+1}$, there exist $A \in \L_k$ and $A' \in \L_{k+1}$ such that $\left| A \cap A' \right| = d-1$. 
\end{itemize}     
Note that covering is equivalent to $1$-covering. Also $m$-covering (for any $m \geq 1$) implies covering and covering implies dimension reduction. Linkage together with adjacency is equivalent to strong adjacency for a layer family: the new notion is introduced because an intermediate step in our construction is a layer family with linkage that does not satisfy adjacency.

In~\cite{Eisenbrand:Diameter-of-Polyhedra}, Eisenbrand, H{\"a}hnle, Razborov, and Rothvo{\ss} present two constructions of connected layer families. The first construction produces a connected layer family $\Vn=(\V_1, \V_2, \dots, \V_\ell)$ on $n$ symbols of any dimension $d \leq n$ that satisfies covering and completeness, and hence also dimension reduction.

Let $\Lambda(n,d)$ denote the maximal number of layers in a $d$-dimensional connected layer family on $n$ symbols. 
Eisenbrand et al.{} establish (see Theorem 4.1 in~\cite{Eisenbrand:Diameter-of-Polyhedra}) that 
\[\Lambda(n,d) \geq \left\lfloor \frac{n - (d-1)}{\ln n} \right\rfloor\]
by proving the existence of many disjoint $(n,d,d-1)$-covering designs. Each covering design is a layer in their first construction of a connected layer family.

Our goal in this section is to modify this construction to preserve dimension reduction and add the properties of endpoint count and strong adjacency while not reducing the diameter by more than a constant factor. In the next section, we do the same to the second construction of Eisenbrand et al., which has superlinear diameter.

The steps are as follows.
\begin{enumerate}
\item From $\Vn$, construct a new family $\Wn$ by merging groups of $m$ consecutive layers. (Eventually we will need to choose any $m \geq 3$.) This strengthens covering to $m$-covering. In particular, $\Wn$ still satisfies dimension reduction. Let $\delta$ be the number of layers of $\Vn$, which is roughly $1/m$ times the number of layers of $\Vn$. 
\item From $\Wn$, construct a new family $\Xn$ by applying the map $D$ that duplicates each symbol. Then $\Xn$ has adjacency and endpoint count for a trivial reason: each $(2d-1)$-set is a subset of at most one $2d$-set in the entire family. Furthermore, although $\Xn$ no longer satisfies completeness, it still satisfies dimension reduction for the following reason. For any $S \subseteq [2] \times [n]$ of width at most $d-1$ and for any $1 \leq k \leq \delta$, there exists $T$ in the $k$th layer of $\Vn$ that contains $\pi(S)$. Then $D(T)$ appears in the $k$th layer of $\Xn$ and $D(T)$ contains $S$. So $S$ is active on every layer of $\Xn$. Wider sets are active on at most one layer. 
\item From $\Xn$, construct a new family $\Yn$ by adding one ``interpolation set'' to each layer except the last. The interpolation sets are chosen nearly randomly so that they will have certain properties with high probability. They are also chosen so that $\Yn$ has linkage. The new sets cause $\Yn$ not to have dimension reduction or adjacency, though it still does have endpoint count.      
\item From $\Yn$, construct a new family $\Zn$ by removing a small number of correspondence sets that interact with the interpolation sets to cause the problems with dimension reduction and adjacency in $\Yn$. We show that $\Zn$ has dimension reduction, strong adjacency, and endpoint count. 
\end{enumerate}  

\subsection{Obtaining $m$-covering}

We begin by describing the construction of $\Wn$ from $\Vn$. Fix $m \geq 1$. If $\ell$ is the number of layers of the layer family $\Vn$, write $\ell = \delta m + r$ with $0 \leq r < m$. Construct a new family $\Wn = (\W_1, \W_1, \dots, \W_\delta)$ by merging consecutive groups of $m$ layers. For $\W_\delta$, merge the last $m+r$ layers. That is, $\W_1 = \V_1 \cup \V_2 \cup \dots \cup \V_{m}$, $\W_2 = \V_{m+1} \cup \dots \cup \V_{2m}$, \dots, $\W_\delta = \V_{(\delta-1)m+1} \cup \dots \cup \V_{\delta m + r}$.  
Thus the diameter $\delta=\delta(\Wn)$ of the new family $\Wn$ satisfies $\delta+1 = \lfloor \ell/m\rfloor$. Also, $\Wn$ satisfies $m$-covering because $\Vn$ satisfies covering. The layer family $\Wn$ is complete because $\Vn$ is complete and the two families contain exactly the same $d$-sets. 

Note that if $m=1$, then $\Wn=\Vn$.  

\begin{theorem}
The layer family $\Wn$ satisfies $m$-covering and thus dimension reduction. The diameter of $\Wn$ is in $\Omega(n/\log n)$.\hfill\qedsymbol
\end{theorem}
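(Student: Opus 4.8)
The plan is to verify the two assertions separately; both come out of unwinding the definition of $\Wn$ in terms of $\Vn$, and neither requires any real work beyond what has already been set up.

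For $m$-covering I would fix a set $C \subseteq [n]$ with $|C| \le d-1$ and a layer $\W_k$ of $\Wn$, and recall that $\W_k$ is by construction the union of $m$ consecutive layers of $\Vn$ (or $m+r$ of them when $k$ is the last index). Covering of $\Vn$ supplies, for each of these layers $\V_j$, a $d$-set $E_j \in \V_j$ with $C \subseteq E_j$; and since $\Vn$ is a \emph{partition} of $\A$, distinct layers are disjoint, so the $E_j$ are pairwise distinct. Hence $\W_k$ contains at least $m$ distinct $d$-sets containing $C$, which is exactly $m$-covering. The ``thus dimension reduction'' then follows from the implications $m$-covering $\Rightarrow$ covering $\Rightarrow$ dimension reduction recorded in Section~\ref{section:definitions}.

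For the diameter bound I would use that the underlying graph of any layer family is a path, so its diameter is one less than its number of layers. Grouping the $\ell$ layers of $\Vn$ into blocks of $m$ gives $\delta(\Wn) + 1 = \lfloor \ell/m\rfloor$, as already noted in the construction, and Theorem 4.1 of~\cite{Eisenbrand:Diameter-of-Polyhedra} gives $\ell \ge \lfloor (n-(d-1))/\ln n\rfloor$. Since $m$ is a fixed constant, combining these yields
\[
\delta(\Wn) \;=\; \left\lfloor \frac{\ell}{m}\right\rfloor - 1 \;\ge\; \frac{1}{m}\left\lfloor \frac{n-(d-1)}{\ln n}\right\rfloor - 2 \;\in\; \Omega\!\left(\frac{n}{\log n}\right).
\]

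There is essentially no obstacle — the statement is recorded without a separate proof for that reason — but two points deserve a word of care. First, the upgrade from covering to $m$-covering genuinely uses disjointness of the merged layers of $\Vn$; without it one could not rule out the same $d$-set being ``recounted'' across blocks. Second, $m$ must be held fixed as $n \to \infty$ for the asymptotic bound to survive, and the displayed estimate tacitly assumes the intended regime $d = o(n/\log n)$ (in particular $d$ fixed), so that the right-hand side really is of order $n/\log n$.
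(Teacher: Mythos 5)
Your proof is correct and essentially the same as the paper's (which records this theorem with no separate argument, the observations being made inline during the construction of $\Wn$). The $m$-covering argument — one $d$-set per merged layer of $\Vn$, distinctness of those $d$-sets from the partition property — is the intended reasoning, and the diameter bound follows from $\delta(\Wn)+1 = \lfloor \ell/m\rfloor$ exactly as you say.

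One correction to your closing caveat: the remark that the bound ``tacitly assumes the intended regime $d = o(n/\log n)$ (in particular $d$ fixed)'' is too restrictive, and in fact does not describe the paper's setup. All that is needed for $\lfloor (n-(d-1))/\ln n\rfloor = \Omega(n/\log n)$ is that $n-d = \Omega(n)$, i.e.\ that $d$ stays bounded away from $n$ by a constant fraction. In Section~\ref{section:Eisenbrand1} the paper works (as Appendix~\ref{section:separation-lemmas} makes explicit, taking $N := n = 2d$) in the regime $d = n/2$, which is linear in $n$ — the bound still holds there. So the hypothesis you flagged is neither the paper's regime nor necessary for the estimate.
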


\subsection{Doubling}

For the next step, we perform a doubling operation inspired by H\"ahnle's construction in~\cite{Haehnle:ConstructingSPGs}. Construct a new family $\Xn=(\X_1,\X_2,\dots,\X_\delta)$ of dimension $2d$ on the symbol set $[2] \times [n]$ of size $2n$ by duplicating a symbol every time it occurs in $\Wn$. That is, for each $k = 1,\dots,\delta$, define  
\[\X_k = \{ D(C) : C \in \W_k\}.\]

Every $2d$-set in $\Xn$ consists of two identical rows, so its width is exactly $d$. We call these $2d$-sets \defn{correspondence sets}: each set $D(C)$ in $\X_k$ corresponds to the set $C \in \W_k$. 

\begin{theorem}
The layer family $\Xn$ satisfies adjacency, endpoint count, $m$-covering, and dimension reduction. The diameter of $\Xn$ is in $\Omega(n/\log n)$.\hfill\qedsymbol
\end{theorem}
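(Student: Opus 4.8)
The plan is to verify each of the four claimed properties of $\Xn$ in turn, mostly by transferring them from $\Wn$ through the doubling map $D$ and its pseudo-inverse $\pi$. First I would record the basic structural observation already noted in the text: every $2d$-set appearing in $\Xn$ has the form $D(C) = [2] \times C$ for some $d$-set $C$, hence consists of two identical rows and has width exactly $d$. The key consequence is that if $S \subseteq [2] \times [n]$ with $|S| = 2d-1$ is contained in some $D(C)$, then $S$ must be $D(C)$ with one box removed, so its width is either $d$ or $d-1$; in either case $\pi(S)$ determines $C$ uniquely (when $\width(S) = d$ we have $\pi(S) = C$; when $\width(S) = d-1$, the missing column is the unique element of $[n]$ whose boxes in $D(C)$ are not both in $S$, but this case cannot arise for $|S| = 2d-1$ since removing one box from $D(C)$ leaves both rows agreeing except in one column — so actually $\width(S) = d$ always). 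Thus each $(2d-1)$-subset of the symbol set is contained in \emph{at most one} $2d$-set of the entire family $\Xn$.

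From this uniqueness, \textbf{endpoint count} is immediate: each $F \in \binom{[2]\times[n]}{2d-1}$ is contained in at most one $2d$-set, a fortiori at most two. \textbf{Adjacency} is also immediate for the same trivial reason: if $S, S' \in \A(\Xn)$ with $|S \cap S'| = 2d-1$, then $S \cap S'$ is a $(2d-1)$-set contained in both $S$ and $S'$, forcing $S = S'$; so the hypothesis of adjacency is vacuous and the property holds trivially (both sets lie in the same vertex). Next, for \textbf{$m$-covering} I would argue exactly as sketched in step (2) of the construction outline: given $S \subseteq [2]\times[n]$ with $\width(S) \leq d-1$, the set $\pi(S)$ is a subset of $[n]$ of size at most $d-1$, so by $m$-covering of $\Wn$, for each layer $\W_k$ there are at least $m$ distinct $d$-sets $C_1,\dots,C_m \in \W_k$ containing $\pi(S)$; then $D(C_1),\dots,D(C_m)$ are $m$ distinct $2d$-sets in $\X_k$, and each contains $S$ because $S \subseteq [2]\times\pi(S) \subseteq [2]\times C_i = D(C_i)$. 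Hence $S$ is active with multiplicity at least $m$ on every layer. (If instead $S$ has width $\geq d-1$ but is not contained in any $D(C)$ — e.g. width $d$ but not of the doubled form — then $S$ need not be active at all, which is fine: the $m$-covering hypothesis only concerns $S$ with $|S| \leq 2d-1$, and such $S$ automatically has width $\leq d-1$... wait, no: $|S| \leq 2d-1$ only forces $\width(S) \leq 2d-1$. But the definition of $m$-covering in the paper is stated for subsets $C$ of $[n]$ of size at most $d-1$; in dimension $2d$ the relevant hypothesis is width at most... ) — here I would be careful to state $m$-covering for $\Xn$ in terms of the width: every $S$ with $\width(S) \leq d-1$ is $m$-active on every layer. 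Since $m$-covering implies covering implies dimension reduction, \textbf{dimension reduction} follows once $m$-covering is established (one can also give the direct argument from step (2): a set $S$ with $|S| \leq 2d-1$ and width $\leq d-1$ is active on every layer via the $D(T)$ construction, while any wider relevant set is active on at most one layer, so the induced subgraph $G_F$ is either a subpath or a single vertex, hence connected). Finally, \textbf{the diameter bound} $\delta(\Xn) \in \Omega(n/\log n)$: the doubling operation does not change the number of layers, so $\delta(\Xn) = \delta(\Wn)$, and $\Xn$ lives on $2n$ symbols; since the previous theorem gives $\delta(\Wn) = \lfloor \ell/m\rfloor - 1$ with $\ell \geq \lfloor (n-d+1)/\ln n\rfloor$ from the Eisenbrand et al.\ bound and $m$ a constant, we get $\delta(\Xn) \in \Omega(n/\log n) = \Omega((2n)/\log(2n))$, as claimed.

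The main obstacle — really the only subtle point — is making the width bookkeeping airtight in the dimension reduction / $m$-covering argument: one must be precise about the fact that a $(2d-1)$-subset of a correspondence set $D(C)$ has width exactly $d$ (not $d-1$), so the ``at most one $2d$-set'' claim genuinely holds and adjacency/endpoint count are vacuously or trivially satisfied; and conversely one must match the paper's convention for how the covering properties are phrased in the doubled setting (size $\leq d-1$ in $[n]$ versus width $\leq d-1$ in $[2]\times[n]$). Everything else is a direct transfer through $D$ and $\pi$ using that $D$ and $\pi$ are monotone and that $\pi \circ D = \mathrm{id}$, together with the already-proved properties of $\Wn$ and the Eisenbrand et al.\ layer-count bound.
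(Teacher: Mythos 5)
Your proof is correct and follows essentially the same route the paper sketches in step~(2) of the construction outline: each $(2d-1)$-set has width exactly $d$ and hence lies in at most one correspondence set (giving adjacency and endpoint count vacuously), narrow sets of width $\leq d-1$ are covered on every layer via $D(T)$, and the diameter is unchanged because doubling preserves the layer count. One point worth keeping explicit when writing this up: you are right to insist that the ``$m$-covering'' property must be interpreted in terms of width rather than cardinality in the doubled setting --- the literal cardinality-based reading of the definition would fail for $\Xn$ as soon as $m \geq 2$, since any $(2d-1)$-set of width $d$ lies in at most one $2d$-set. The paper glosses over this (indeed it later uses ``$m$-sectioned-\emph{width}-covering'' in the analogous theorem of Section~4), so your careful bookkeeping resolves a genuine imprecision rather than creating one.
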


Since no $(2d-1)$-set is active on more than one layer, $\Xn$ fails strong adjacency at every pair of consecutive layers. Our next step is to rectify this. 

\subsection{Obtaining linkage}

We now modify $\Xn$ to construct a new family $\Yn=(\Y_1,\dots,\Y_\delta)$ by adding a $2d$-set to each layer so that $\Yn$ has linkage. Choose (without replacement) random $(d-1)$-subsets $P_1,\dots,P_{\delta-1}$ of $[n]$. Assume that $d$ and $n-d$ are sufficiently large so that Proposition~\ref{proposition:P-intersection} allows us to stipulate that among the $P_1, P_2, \dots, P_{\delta-1}$, each pairwise intersection is of size at most $d-4$. By the covering property for $\Wn$, for each $k$ there exists a $d$-set $G_k \in \W_{k+1}$ that contains $P_k$. Write $G_k = P_k \cup \{g_k\}$. See Figure~\ref{figure:construction1-interpolation}. Choose $f_k \notin G_k$ such that $F_k := P_k \cup \{f_k\} \in \W_k$. Let $I_k$ be the $2d$-set whose first row is $F_k$ and whose second row is $G_k$. Also let $U_k = P_k \cup \{f_k,g_k\}$, so we have $\pi(I_k) = U_k$. The sets of the form $I_k$ are called \defn{interpolation sets}.

\begin{figure}[hbt]
\begin{tikzpicture}
\draw [decorate,decoration={brace,amplitude=4pt},xshift=0pt,yshift=0pt]
(1.25,1.3) -- (3.15,1.3) node [black,midway,yshift=0.4cm]
{\footnotesize $d-1$};
\draw [decorate,decoration={brace,amplitude=4pt},xshift=-4pt,yshift=0pt]
(-.1,0) -- (-.1,1.2) node [black,midway,xshift=-0.6cm] 
{$I_{k}$};
\draw (.6,.6) rectangle (1.2,1.2);
\node at (.9,.9) {$f_{k}$};
\draw (0,0) rectangle (.6,.6);
\node at (.3,.3) {$g_{k}$};
\draw (1.2,1.2) rectangle (3.2,.6);
\node at (2.2,.9) {$P_{k}$};
\draw (1.2,.6) rectangle (3.2,0);
\node at (2.2,.3) {$P_{k}$};
\node at (4.1,.9) {$F_k\in \W_{k}$};
\node at (4.32,.3) {$G_k \in \W_{k+1}$};
\draw[->] (1.8,-.2) --node[anchor=west]{$\pi$} (1.8,-.6);
\draw (0,-1.5) rectangle (3.2,-.9);
\node at (1.6,-1.2) {$U_{k}$};
\end{tikzpicture}
\caption{Schematic diagram of an interpolation set $I_{k}$ and related sets}
\label{figure:construction1-interpolation}
\end{figure}

Now for $1 \leq k \leq \delta-1$, define the layer $\Y_k$ of the new family by $\Y_k = \X_k \cup \{I_k\}$. Finally, define $\Y_\delta = \X_\delta$. 

The idea is that $I_k$ has $2d-1$ elements in common with $[2] \times F_k \in \Y_k$ and $2d-1$ elements in common with $[2] \times G_k \in \Y_{k+1}$. In particular, the latter fact gives us the property that layers $\Y_k$ and $\Y_{k+1}$ contain (respectively) the $2d$-sets $I_k$ and $[2] \times G_k$.

\begin{corollary} \label{corollary:Uintersection}
For distinct indices $k$ and $k'$, $\left| U_k \cap U_{k'} \right| \leq d-2$. 
\end{corollary}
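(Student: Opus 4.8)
The plan is to derive Corollary~\ref{corollary:Uintersection} directly from Proposition~\ref{proposition:P-intersection} once the structure of the sets $U_k$ is unpacked. Recall from the construction that $U_k = P_k \cup \{f_k,g_k\}$, where $g_k \notin P_k$ (because $G_k = P_k\cup\{g_k\}$ is a $d$-set, so the adjoined symbol cannot already lie in $P_k$) and $f_k \notin G_k \supseteq P_k$ (because $f_k$ was chosen outside $G_k$). In particular $f_k \neq g_k$, so $|U_k| = d+1$, and both extra symbols $f_k,g_k$ lie outside $P_k$; the analogous statements hold with $k$ replaced by $k'$.

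The main step is the inclusion
\[
U_k \cap U_{k'} \;\subseteq\; (P_k \cap P_{k'}) \,\cup\, \{f_k,g_k,f_{k'},g_{k'}\} .
\]
I would prove this by the obvious dichotomy: each element of $U_k$ lies in $P_k$ or in $\{f_k,g_k\}$, and each element of $U_{k'}$ lies in $P_{k'}$ or in $\{f_{k'},g_{k'}\}$, so an element of $U_k\cap U_{k'}$ avoiding the four extra symbols must lie in both $P_k$ and $P_{k'}$. Taking cardinalities then gives $|U_k\cap U_{k'}|\le |P_k\cap P_{k'}|+4$.

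It remains only to feed in a bound on $|P_k\cap P_{k'}|$ coming from the random choice of the $P_i$. Since the $P_i$ are random $(d-1)$-subsets of the far larger ground set $[n]$, their pairwise intersections are much smaller than $d$, so by Proposition~\ref{proposition:P-intersection} we may assume, for $d$ and $n-d$ large enough, that $|P_k\cap P_{k'}|\le d-6$; this yields $|U_k\cap U_{k'}|\le (d-6)+4 = d-2$. I do not expect any genuine difficulty here: the only subtlety is that the crude additive ``$+4$'' for the extra symbols forces us to invoke the pairwise-intersection bound in the sharper form $d-6$, rather than the $d-4$ that is enough elsewhere in the construction --- and Proposition~\ref{proposition:P-intersection} supplies this at no additional cost.
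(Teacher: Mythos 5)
Your proposal is correct and follows the same strategy as the paper (write $U_k = P_k\cup\{f_k,g_k\}$, bound $\left|U_k\cap U_{k'}\right|$ additively in terms of $\left|P_k\cap P_{k'}\right|$, then invoke the randomized separation of the $P_j$), but you have been more careful than the paper. The paper's proof asserts that ``$U_k$ and $U_{k'}$ are obtained by adding two elements to (respectively) $P_k$ and $P_{k'}$'' and then reads off $\left|U_k\cap U_{k'}\right|\le d-2$ from $\left|P_k\cap P_{k'}\right|\le d-4$, implicitly estimating the growth of the intersection as $+2$. As you correctly work out from the inclusion $U_k\cap U_{k'}\subseteq (P_k\cap P_{k'})\cup\{f_k,g_k,f_{k'},g_{k'}\}$, the true additive slack is $+4$, not $+2$ (the construction nowhere rules out, for instance, $f_k,g_k\in P_{k'}$ and simultaneously $f_{k'},g_{k'}\in P_k$), so the stated $d-4$ bound only yields $\left|U_k\cap U_{k'}\right|\le d$, which is insufficient for the downstream uses: part~(3) of Lemma~\ref{lemma:widthclassificationinZ} needs $\le d-1$, and Proposition~\ref{proposition:widthcoveringinZ} needs $\le d-2$. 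Your fix --- stipulating $\left|P_k\cap P_{k'}\right|\le d-6$ --- is exactly what is needed and is indeed available for free: the proof of Proposition~\ref{proposition:P-intersection} actually establishes a bound that is a constant fraction of $d$ (namely $\tfrac34 d$ when $n=2d$), far below $d-6$, and the remark following it notes explicitly that $d-k$ is achievable for any fixed~$k$. In short, your version is correct and repairs a small arithmetic slip in the paper's own proof.
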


\begin{proof}
By construction, $P_k$ and $P_{k'}$ have at most $d-4$ elements in common. The result follows since $U_k$ and $U_{k'}$ are obtained by adding two elements to (respectively) $P_k$ and $P_{k'}$. 
\end{proof}

\begin{theorem}
The layer family $\Yn$ has endpoint count and linkage. The diameter of $\Yn$ is in $\Omega(n/\log n)$.\hfill\qedsymbol
\end{theorem}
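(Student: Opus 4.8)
The plan is to check the three assertions separately. The diameter statement is immediate: passing from $\Xn$ to $\Yn$ only enlarges layers (each $\Y_k = \X_k \cup \{I_k\}$, with $\Y_\delta = \X_\delta$), so $\Yn$ has the same number of layers as $\Xn$ and its underlying graph is still a path; hence the diameter of $\Yn$ equals that of $\Xn$, which is in $\Omega(n/\log n)$ by the previous theorem. Before the other two parts I would record that $\Yn$ really is a layer family: each interpolation set $I_k$ has width $d+1$ (its projection is $U_k = P_k \cup \{f_k,g_k\}$ with $f_k,g_k \notin P_k$ and $f_k \neq g_k$), whereas every correspondence set has width exactly $d$, so no $I_k$ is a correspondence set; and the bound $|I_k \cap I_{k'}| \le 2d-4$ proved below shows the $I_k$ are pairwise distinct. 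Since $I_k$ lies only in $\Y_k$, the layers still partition a set of $2d$-subsets of $[2] \times [n]$.

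For endpoint count, fix $F \in \binom{[2] \times [n]}{2d-1}$ and count the $2d$-sets of $\Yn$ containing $F$. By the endpoint count of $\Xn$ --- in fact by the stronger fact noted there that no $(2d-1)$-set lies in more than one correspondence set --- at most one correspondence set contains $F$. It remains to see that at most one interpolation set contains $F$. Writing $I_k = (\{1\} \times F_k) \cup (\{2\} \times G_k)$, for $k \neq k'$ we get $|I_k \cap I_{k'}| = |F_k \cap F_{k'}| + |G_k \cap G_{k'}|$; since $F_k, G_k \subseteq U_k$ and $F_{k'}, G_{k'} \subseteq U_{k'}$, Corollary~\ref{corollary:Uintersection} bounds each summand by $|U_k \cap U_{k'}| \le d-2$, so $|I_k \cap I_{k'}| \le 2d-4 < 2d-1$; thus $F$ cannot be contained in two interpolation sets. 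Altogether $F$ lies in at most $1+1 = 2$ sets of $\Yn$.

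For linkage I would exhibit, for each pair of adjacent layers $\Y_k, \Y_{k+1}$ with $1 \le k \le \delta-1$, the pair $A = I_k \in \Y_k$ and $A' = D(G_k) = [2] \times G_k$; since $G_k \in \W_{k+1}$, we have $D(G_k) \in \X_{k+1} \subseteq \Y_{k+1}$. Writing both sets in two-row form, the first rows are $F_k$ and $G_k$, which meet in $F_k \cap G_k = P_k$ (as $f_k \notin G_k$ and $g_k \notin F_k$) of size $d-1$, while the second rows are both $G_k$, contributing $d$ elements; hence $|A \cap A'| = (d-1) + d = 2d-1$, which is exactly the linkage condition in dimension $2d$.

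There is no deep obstacle here; the one point needing care is the endpoint-count tally, where one must be certain to have considered every way a $(2d-1)$-set could sit inside the two kinds of $2d$-sets --- but the two intersection estimates above ($\le 2d-4$ for two interpolation sets, and the inherited uniqueness for correspondence sets) already give the bound with room to spare. One could alternatively deduce the correspondence-set uniqueness directly from the two-identical-rows structure of $D(C)$, avoiding any reliance on the previously stated property of $\Xn$.
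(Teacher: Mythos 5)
Your proof is correct, and it supplies the details that the paper treats as immediate (the theorem appears in the paper with a terminal $\qedsymbol$ and no written proof, since the authors regard this as a routine check after the construction). All three parts are handled cleanly: the diameter claim is indeed just the observation that $\Xn\to\Yn$ does not change the number of layers; the linkage pair $I_k$, $D(G_k)$ with intersection size $(d-1)+d$ is exactly the one the construction is built around (as the paper remarks just after Figure~\ref{figure:construction1-interpolation}); and the endpoint-count tally correctly uses the two-identical-rows form of correspondence sets plus Corollary~\ref{corollary:Uintersection} to bound $\lvert I_k\cap I_{k'}\rvert\le 2d-4$. One small stylistic contrast with what the paper does later for $\Zn$: your count splits directly by set type (correspondence vs.\ interpolation), whereas the paper's Lemma~\ref{lemma:widthclassificationinZ} stratifies by $\width(S)$ and derives the same conclusion after observing $\width(S)\ge d$; both arguments ultimately rest on Corollary~\ref{corollary:Uintersection}, and the content is the same. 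Your additional sanity check that $\Yn$ is genuinely a partition (interpolation sets have width $d+1$, so are distinct from every correspondence set, and are pairwise distinct by the intersection bound) is a worthwhile remark the paper leaves implicit.
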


\subsection{Obtaining adjacency and dimension reduction}

We need to make one more modification in order to create adjacency (and thus strong adjacency) and dimension reduction. For each interpolation set $I_k$, we will remove certain correspondence sets that combine with $I_k$ to violate these properties. For each $k$ and for each $a \in P_k$, define 
\[R_k^a = D\left( (P_k \setminus \{a\}) \cup \{f_k,g_k\} \right).\]

That is, $R_k^a$ is a correspondence set such that $\pi(R_k^a) \subseteq U_k$ but $R_k^a \neq D(F_k)$ and $R_k^a \neq D(G_k)$. See Figure~\ref{figure:Rka}. The idea is that $R_k^a$ may combine with $I_{k}$ to create a violation of dimension reduction in $\Yn$. Specifically, $R_k^a \cap I_k$ is a set of width $d$ that is active on at least two layers: layer $k$ and the layer containing $R_k^a$. These two layers may not be adjacent, and there is nothing to guarantee that $R_k^a \cap I_k$ is active on the layers in between the two. 

\begin{figure}[hbt]
\begin{tikzpicture}
\draw [decorate,decoration={brace,amplitude=4pt},xshift=-4pt,yshift=0pt]
(-.1,0) -- (-.1,1.2) node [black,midway,xshift=-0.6cm] 
{$R_{k}^a$};
\draw (.6,.6) rectangle (1.2,1.2);
\node at (.9,.9) {$f_{k}$};
\draw (.6,0) rectangle (1.2,0.6);
\node at (.9,.3) {$f_{k}$};
\draw (0,0) rectangle (.6,.6);
\node at (.3,.3) {$g_{k}$};
\draw (0,.6) rectangle (.6,1.2);
\node at (.3,.9) {$g_{k}$};
\draw (1.2,1.2) rectangle (3.2,.6);
\node at (2,.9) {$P_{k}$};
\draw (1.2,.6) rectangle (3.2,0);
\node at (2,.3) {$P_{k}$};
\draw[fill=lightgray, color=lightgray] (2.7,.7) rectangle (3.1,1.1);
\draw[fill=lightgray, color=lightgray] (2.7,.1) rectangle (3.1,0.5);
\node at (2.9,.3) {$a$};
\node at (2.9,.9) {$a$};
\end{tikzpicture}
\caption{Schematic diagram of the correspondence $2d$-set $R_{k}^a$. The shaded element $a \in P_{k}$ is removed: $(a,1),(a,2)\not\in R_k^a$.}
\label{figure:Rka}
\end{figure}

We thus define the collection  
$$\mathcal{R} = \{R_k^a: \; 1 \leq k \leq \delta, a \in P_k \}$$  
and define a new family 
\[\Zn = (\Z_1, \Z_2, \dots \Z_\delta)\]
where for each $k$,  
\[\Z_k = \Y_k \setminus \mathcal{R}.\] 

We will proceed to show that the family $\Zn$ has dimension reduction and all of the other desired properties. 

\begin{lemma}\label{lemma:widthclassificationinZ}
Let $S \subseteq [2] \times [n]$. 
\begin{enumerate}
\item If $\width(S) \geq d+2$, then no set in $\Zn$ contains $S$. 
\item If $\width(S) = d+1$, then no correspondence set and at most one interpolation set in $\Zn$ contains $S$. 
\item If $\width(S) = d$, then at most one correspondence set and at most one interpolation set in $\Zn$ contain $S$. 
\end{enumerate} 
\end{lemma}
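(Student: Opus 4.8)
The plan is to use the fact that the two species of sets populating $\Zn$ have rigidly controlled widths. Every correspondence set $D(C)$ has width exactly $d$, since its two rows are both the $d$-set $C$. Every interpolation set $I_k$ has width exactly $d+1$: its rows are $F_k = P_k \cup \{f_k\}$ and $G_k = P_k \cup \{g_k\}$, and since $f_k \notin G_k$ we have $f_k \neq g_k$, so $\pi(I_k) = P_k \cup \{f_k,g_k\} = U_k$ has $d+1$ elements. I would open the proof by recording these two facts together with the trivial monotonicity $\width(S) \leq \width(T)$ whenever $S \subseteq T$.

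Part (1) is then immediate: a set $T \in \Zn$ with $S \subseteq T$ would have $\width(T) \geq \width(S) \geq d+2$, impossible since every member of $\Zn$ has width $d$ or $d+1$. For part (2), monotonicity again kills the correspondence sets (width $d < d+1$), and for interpolation sets I would argue by contradiction: if $I_k$ and $I_{k'}$ with $k \neq k'$ both contained $S$, then $\pi(S) \subseteq \pi(I_k) \cap \pi(I_{k'}) = U_k \cap U_{k'}$, whence $d+1 = |\pi(S)| \leq |U_k \cap U_{k'}| \leq d-2$ by Corollary~\ref{corollary:Uintersection} --- absurd. Part (3) follows the same pattern: the interpolation-set half is the identical computation (now $d \leq d-2$), and for the correspondence-set half, if $D(C) \in \Zn$ contains $S$ then $\pi(S) \subseteq C$ with $|C| = d = \width(S) = |\pi(S)|$, forcing $C = \pi(S)$; hence the unique candidate is $D(\pi(S))$.

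I do not expect a real obstacle here --- the statement is a bookkeeping lemma whose entire content is the width arithmetic above plus Corollary~\ref{corollary:Uintersection}. The only subtlety worth a sentence is that deleting the collection $\mathcal{R}$ in passing from $\Yn$ to $\Zn$ only removes sets, so none of these ``at most'' counts can increase; in fact all three parts already hold verbatim for $\Yn$, and $\Z_k \subseteq \Y_k$ for every $k$. The deletion of $\mathcal{R}$ earns its keep in the later lemmas about dimension reduction and adjacency, not in this one.
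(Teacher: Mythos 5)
Your proof is correct and follows essentially the same route as the paper's: width monotonicity handles part (1) and the correspondence-set halves of (2) and (3), while Corollary~\ref{corollary:Uintersection} applied to $\pi(S) \subseteq U_k \cap U_{k'}$ handles the interpolation-set halves. Your closing observation that the lemma already holds in $\Yn$ and deletion can only help is correct but not needed; the paper argues directly about $\Zn$.
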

      
\begin{proof} $ $
\begin{enumerate}
\item Suppose $\width(S) \geq d+2$. Each correspondence set has width $d$ and each interpolation set has width $d+1$, so none contain $S$. 
\item Suppose $\width(S) = d+1$. Then again no correspondence set is wide enough to contain $S$. If $I_k$ and $I_k'$ are distinct interpolation sets that both contain $S$, then by applying $\pi$, we see that $U_k$ and $U_k'$ both contain $\pi(S)$. This implies $\left| U_k \cap U_k' \right| \geq \left| \pi(S) \right| = d+1$, contradicting Corollary~\ref{corollary:Uintersection}. 
\item Suppose $\width(S) = d$. If $T$ is a correspondence set that contains $S$, then $\pi(T) \supseteq \pi(S)$. But both $\pi(T)$ and $\pi(S)$ have width $d$, so in fact $\pi(T) = \pi(S)$ and thus by the form of correspondence sets, $T = D(\pi(S))$. Thus no more than one correspondence set contains $S$. 

If $I_k$ and $I_k'$ are distinct interpolation sets that both contain $S$, then by the same argument as in the previous statement, we see that $\left| U_k \cap U_k' \right| \geq \left| \pi(S) \right| = d$, again contradicting Corollary~\ref{corollary:Uintersection}. So at most one interpolation set contains $S$.  
\end{enumerate}
\end{proof}

\begin{proposition} \label{proposition:endpointcountinZ}
The layer family $\Zn$ has endpoint count. 
\end{proposition}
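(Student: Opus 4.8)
The plan is to reduce the whole statement to the width classification already established in Lemma~\ref{lemma:widthclassificationinZ}, together with one elementary observation about the width of a $(2d-1)$-set.

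First I would observe that if $F \in \binom{[2]\times[n]}{2d-1}$, then $\width(F) \geq d$. Indeed, $F$ meets each of the $\width(F)$ columns of the two-row diagram in at most two elements, so $2\,\width(F) \geq |F| = 2d-1$, forcing $\width(F) \geq d$. In particular the value $d-1$ is impossible, and $F$ can be contained only in $2d$-sets of width $d$ or $d+1$ — that is, in correspondence sets or interpolation sets.

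Now I would split into three cases according to whether $\width(F) \geq d+2$, $\width(F) = d+1$, or $\width(F) = d$. If $\width(F) \geq d+2$, then by Lemma~\ref{lemma:widthclassificationinZ}(1) no set of $\Zn$ contains $F$. If $\width(F) = d+1$, then by Lemma~\ref{lemma:widthclassificationinZ}(2) no correspondence set contains $F$ and at most one interpolation set does, so $F$ lies in at most one set of $\Zn$. Finally, if $\width(F) = d$, then by Lemma~\ref{lemma:widthclassificationinZ}(3) at most one correspondence set and at most one interpolation set contain $F$; since a correspondence set has width $d$ and an interpolation set has width $d+1$, these two sets are distinct, so $F$ lies in at most two sets of $\Zn$. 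In every case $F$ is contained in at most two $2d$-sets of $\Zn$, which is exactly the endpoint count property.

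I do not expect a genuine obstacle here: the substantive work — bounding the number of interpolation sets through a fixed set via Corollary~\ref{corollary:Uintersection}, and identifying the unique correspondence set of a given width — has already been carried out in Lemma~\ref{lemma:widthclassificationinZ}. The only point requiring a small amount of care is the boundary case $\width(F)=d$, where both halves of Lemma~\ref{lemma:widthclassificationinZ}(3) are needed and one must note that a correspondence set and an interpolation set can never coincide, so that the count "one plus one" does not exceed the two sets permitted by endpoint count.
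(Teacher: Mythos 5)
Your argument is correct and follows the paper's proof essentially verbatim: both establish $\width(F)\geq d$ by an elementary counting bound and then invoke the three cases of Lemma~\ref{lemma:widthclassificationinZ} to conclude at most two $2d$-sets of $\Zn$ contain $F$. The only cosmetic difference is that the paper bounds the width via a row of cardinality at least $d$ while you bound it via columns, and you make explicit the small observation that a correspondence set and an interpolation set are always distinct.
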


\begin{proof}
Let $S \subseteq [2] \times [n]$ be a set of size $2d-1$. Then one row of $S$ has cardinality at least $d$, so $\width(S) \geq d$. By considering each of the three cases of Lemma~\ref{lemma:widthclassificationinZ}, we see that at most two sets in $\Zn$ contain $S$, verifying endpoint count. 
\end{proof}

\begin{lemma} \label{lemma:adjacentlayerZ}
Let $S$ be a subset of $[2] \times [n]$ of width $d$. Then either $S$ is contained in at most one $2d$-set of $\Zn$, or it is contained in exactly two $2d$-sets of $\Zn$ that occur in the same or adjacent layers.  
\end{lemma}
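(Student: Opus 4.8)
The plan is to reduce at once to the only nontrivial situation and then identify the two sets explicitly. By Lemma~\ref{lemma:widthclassificationinZ}(3), a width-$d$ set $S$ is contained in at most one correspondence set and at most one interpolation set of $\Zn$, hence in at most two $2d$-sets altogether; if it lies in at most one of them we are in the first alternative and there is nothing to prove. So I may assume $S$ is contained in exactly two $2d$-sets of $\Zn$, and since two correspondence sets, or two interpolation sets, are impossible by the same lemma, these must be one correspondence set $T$ and one interpolation set $I_k$ for some $k \in \{1,\dots,\delta-1\}$. As in the proof of Lemma~\ref{lemma:widthclassificationinZ}(3), any correspondence set containing the width-$d$ set $S$ must equal $D(\pi(S))$, so $T = D(\pi(S))$. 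Since $\Wn$ is a genuine layer family, i.e.\ a partition of its $d$-sets, the $d$-set $\pi(S)$ lies in a unique layer $\W_j$, so the correspondence set $T = D(\pi(S))$, belonging to $\Zn$ by assumption, belongs to $\Z_j$; it thus remains to show $\left| j-k \right| \le 1$.

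For this I would analyze $\pi(S)$ relative to $I_k$. From $S \subseteq I_k$ we get $\pi(S) \subseteq \pi(I_k) = U_k = P_k \cup \{f_k,g_k\}$, and since $\left|\pi(S)\right| = d = \left|U_k\right| - 1$ we have $\pi(S) = U_k \setminus \{x\}$ for a unique $x \in U_k$, giving three cases. If $x = g_k$, then $\pi(S) = F_k \in \W_k$, so $T = D(F_k)$ lies in $\Z_k$ and $j = k$. If $x = f_k$, then $\pi(S) = G_k \in \W_{k+1}$, so $T = D(G_k)$ lies in $\Z_{k+1}$ and $j = k+1$. If $x \in P_k$, then $\pi(S) = (P_k\setminus\{x\})\cup\{f_k,g_k\}$, so $T = D(\pi(S)) = R_k^x$, a member of $\mathcal{R}$; but every set of $\mathcal{R}$ is deleted in passing from $\Yn$ to $\Zn$, contradicting $T \in \Zn$, so this case does not occur. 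In the two surviving cases $T$ and $I_k$ lie in the same layer or in adjacent layers, which is the assertion.

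I do not anticipate a genuine obstacle; the argument is essentially bookkeeping. The points requiring care are: that $\Wn$ is an honest layer family, so that a $d$-set determines its layer uniquely and the layer of $T$ is well defined; that $\mathcal{R}$ consists exactly of the sets $R_k^a$, so that the case $x\in P_k$ is genuinely excluded rather than needing a separate estimate; and that $I_k \in \Z_k$, which holds because $I_k$ has width $d+1$ and so is never one of the width-$d$ correspondence sets removed in $\mathcal{R}$. The one mildly delicate realization is that the case $x \in P_k$ — the only case that could a priori produce two non-adjacent layers — is not handled by any geometric argument at all, but simply by the fact that the offending correspondence set $R_k^x$ has already been removed, which is exactly the design purpose of $\mathcal{R}$.
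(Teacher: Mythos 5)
Your proposal is correct and follows essentially the same route as the paper: reduce via Lemma~\ref{lemma:widthclassificationinZ}(3) to one correspondence set and one interpolation set, write $\pi(S) = U_k \setminus \{x\}$, and split into the cases $x = f_k$, $x = g_k$, $x \in P_k$, with the last case excluded precisely because $R_k^x$ has been removed. The small elaborations you add (that $T = D(\pi(S))$ and that $T$'s layer is well-defined since $\Wn$ partitions its $d$-sets) are implicit in the paper's argument but do not constitute a different approach.
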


\begin{proof}
If $S$ is contained in at most one set in $\Zn$, then the conclusion is immediate. By Lemma~\ref{lemma:widthclassificationinZ}, the only other possibility is that $S$ is contained in one correspondence set $T$ and one interpolation set $I_k$. Furthermore, since $\width(S) = d$, we know that $\pi(S) = \pi(T)$. Since $S \subset I_k$, we can apply $\pi$ to get $\pi(T) = \pi(S) \subseteq U_k$. So $\pi(T) = U_k \setminus \{a\}$ for some $a \in [n]$.  

If $a \notin \{f_k,g_k\}$, then we have $\pi(T) = R_k^a$, but this is a contradiction because all such sets were removed in the construction of $\Zn$. If $a = f_k$, then $T=P_k \cup \{g_k\} = G_k$. Then the two sets containing $S$ are $I_k \in \Z_k$ and $D(G_k) \in \Z_{k+1}$. If $a=g_k$, then $\pi(T)=P_k \cup \{f_k\} = F_k$. Then the two sets containing $S$ are $I_k$ and $D(F_k)$ which are both in layer $\Z_k$.
\end{proof}

\begin{proposition} \label{proposition:adjacencyinZ}
The layer family $\Zn$ has adjacency.
\end{proposition}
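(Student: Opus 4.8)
The plan is to deduce the statement directly from Lemmas~\ref{lemma:widthclassificationinZ} and~\ref{lemma:adjacentlayerZ}, with one small combinatorial observation added. Let $A$ and $A'$ be two $2d$-sets occurring in $\Zn$ with $\left| A \cap A' \right| = 2d-1$; if $A = A'$ there is nothing to prove, so assume $A \neq A'$ and set $S = A \cap A'$. Since $S$ has $2d-1$ elements distributed among the two rows of the diagram, at least one row contains $d$ of them, and because distinct elements of a single row map to distinct symbols under $\tilde{\pi}$, this forces $\width(S) \geq d$.

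Next I would run through the three cases of Lemma~\ref{lemma:widthclassificationinZ}, which together exhaust the possibilities thanks to the bound $\width(S) \geq d$. If $\width(S) \geq d+2$, then no set of $\Zn$ contains $S$, contradicting $S \subseteq A$. If $\width(S) = d+1$, then at most one set of $\Zn$ (necessarily an interpolation set) contains $S$, contradicting the fact that both $A$ and $A'$ contain $S$. Hence $\width(S) = d$.

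Finally, with $\width(S) = d$ in hand, Lemma~\ref{lemma:adjacentlayerZ} applies: $S$ is a width-$d$ subset of $[2] \times [n]$ contained in at least two $2d$-sets of $\Zn$ (namely $A$ and $A'$), so by that lemma it is in fact contained in exactly two such sets, and they occur in the same or adjacent layers. Since $A$ and $A'$ are two distinct $2d$-sets containing $S$, they must be precisely these two sets; therefore $A$ and $A'$ occur in the same or adjacent layers of $\Zn$, which is exactly the adjacency property.

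The content of this argument is entirely front-loaded into the two preceding lemmas; the only genuinely new point is the elementary remark that a $(2d-1)$-element subset of a two-row diagram has width at least $d$, which is what makes the case analysis in Lemma~\ref{lemma:widthclassificationinZ} exhaustive here. Consequently I do not expect any real obstacle in writing this proof — the substantive work, namely verifying in Lemma~\ref{lemma:adjacentlayerZ} that deleting the sets $R_k^a$ removes exactly the width-$d$ collisions that could straddle non-adjacent layers, has already been carried out.
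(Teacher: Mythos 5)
Your proposal is correct and follows essentially the same route as the paper: observe that a $(2d-1)$-subset of the two-row diagram has width at least $d$, use Lemma~\ref{lemma:widthclassificationinZ} to rule out widths $d+1$ and beyond, and conclude via Lemma~\ref{lemma:adjacentlayerZ} for width exactly $d$. The only cosmetic difference is that you phrase the argument in terms of the specific pair $A, A'$ and their intersection, whereas the paper runs the same case analysis starting from a generic $(2d-1)$-subset $S$.
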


\begin{proof}
Let $S$ be a $(2d-1)$-subset of $[2] \times [n]$. As before, this implies that $\width(S) \geq d$. If $\width(S) \geq d+2$ or $\width(S) = d+1$, then the first or the second cases of Lemma~\ref{lemma:widthclassificationinZ} imply that $S$ is contained in at most one set in $\Zn$. Adjacency immediately follows. Finally, if $\width(S) = d$, then Lemma~\ref{lemma:adjacentlayerZ} implies that $S$ is active on only one layer or on two adjacent layers of $\Zn$, so again adjacency follows.
\end{proof}

\begin{proposition} \label{proposition:linkageinZ}
The layer family $\Zn$ has strong adjacency. 
\end{proposition}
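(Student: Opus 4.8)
The plan is to split strong adjacency into its two constituent parts: since for a layer family strong adjacency is equivalent to adjacency plus linkage, and adjacency is already established in Proposition~\ref{proposition:adjacencyinZ}, the whole task reduces to proving linkage for $\Zn$. That is, for each pair of adjacent layers $\Z_k,\Z_{k+1}$ (equivalently, for each $1 \leq k \leq \delta-1$) I must produce $A \in \Z_k$ and $A' \in \Z_{k+1}$ with $\left|A \cap A'\right| = 2d-1$.

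The natural candidates are $A = I_k$ and $A' = D(G_k)$, which is exactly the pair that was engineered into $\Yn$ to give it linkage. First I would recompute the intersection directly from the definitions: $I_k$ has first row $F_k = P_k \cup \{f_k\}$ and second row $G_k = P_k \cup \{g_k\}$, while $D(G_k)$ has both rows equal to $G_k$. The second rows agree, contributing $d$ symbols, and the first rows meet exactly in $\{1\} \times (F_k \cap G_k) = \{1\} \times P_k$ (using $f_k \notin G_k$), contributing $d-1$ symbols; hence $\left|I_k \cap D(G_k)\right| = 2d-1$. In particular these two $2d$-sets are distinct, so $I_k$ and $D(G_k)$ are genuinely different vertices witnessing an edge.

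The real content is checking that both sets survive the passage from $\Yn$ to $\Zn$, i.e.\ that neither lies in $\mathcal{R}$. For $I_k$ this is immediate: $\mathcal{R}$ consists only of correspondence sets (sets with two identical rows), whereas $I_k$ has $F_k \neq G_k$. For $D(G_k)$ I would argue by contradiction: suppose $D(G_k) = R_{k'}^a$ for some $k'$ and some $a \in P_{k'}$. If $k' = k$ this is impossible since $R_k^a \neq D(G_k)$ by the definition of $R_k^a$. If $k' \neq k$, apply $\pi$ to get $G_k = \pi(R_{k'}^a) \subseteq U_{k'}$; combined with $G_k = P_k \cup \{g_k\} \subseteq U_k$, this yields $\left|U_k \cap U_{k'}\right| \geq \left|G_k\right| = d$, contradicting Corollary~\ref{corollary:Uintersection}. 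Therefore $D(G_k) \in \X_{k+1} \setminus \mathcal{R} \subseteq \Z_{k+1}$ and $I_k \in \Z_k$, so the pair $(I_k, D(G_k))$ establishes linkage across layers $k$ and $k+1$; since $I_k$ exists for every $1 \leq k \leq \delta-1$, linkage holds for all adjacent pairs, and with Proposition~\ref{proposition:adjacencyinZ} this gives strong adjacency. I do not anticipate a serious obstacle here — the one spot needing care is confirming $D(G_k) \notin \mathcal{R}$, and that is exactly where Corollary~\ref{corollary:Uintersection} (and hence the near-disjointness of the random sets $P_k$) does the work.
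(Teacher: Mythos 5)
Your proposal is correct and follows essentially the same route as the paper: both exhibit the pair $I_k \in \Z_k$, $D(G_k) \in \Z_{k+1}$ and show that neither was removed in passing from $\Yn$ to $\Zn$, with the key step being that $D(G_k) = R_{k'}^a$ would force $P_k$ and $P_{k'}$ to overlap too much. The only cosmetic differences are that you invoke Corollary~\ref{corollary:Uintersection} (derived from Proposition~\ref{proposition:P-intersection}) where the paper uses Proposition~\ref{proposition:P-intersection} directly, and that you separately dispose of the $k' = k$ case, which the paper dismisses via its earlier observation that $R_k^a \neq D(G_k)$ by construction.
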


\begin{proof}
For each $k \in \{1,\dots,\delta-1\}$, the sets $I_k \in \Y_k$ and $D(G_k) \in \Y_{k+1}$ provide strong adjacency between these layers in $N$. So to show strong adjacency in $\Zn$, it suffices to show that none of these sets are removed in the construction of $\Zn$. By definition, no interpolation set is removed, so we only need to consider the sets $D(G_k)$. 

Suppose $D(G_k)$ is removed in the construction of $\Zn$. Then $D(G_k) = D(R_{k'}^a)$ for some ${k'} \neq k$ and some $a \notin U_{k'}$. But
\begin{align*}
D(G_k) = D(R_{k'}^a) & \Leftrightarrow  G_k = R_{k'}^a \\
 & \Leftrightarrow  P_k \cup \{g_k\} = \left(P_{k'} \cup \{f_k,g_k\}\right) \setminus \{a\} \\
 & \Rightarrow  P_k \subset P_{k'} \cup \{f_{k'},g_{k'}\} \\
& \Rightarrow  \left| P_k \cap P_{k'} \right| \geq d-3
\end{align*}
and this last statement contradicts Proposition~\ref{proposition:P-intersection}.
\end{proof}

\begin{proposition} \label{proposition:widthcoveringinZ}
If we choose $m \geq 3$, then the layer family $\Zn$ has width-covering.  
\end{proposition}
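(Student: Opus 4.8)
I will show directly that every $S\subseteq[2]\times[n]$ with $\width(S)\le d-1$ is active on every layer $\Z_k$, by producing, in each layer, a correspondence set of $\Zn$ that contains $S$. The first move is a reduction: it suffices to prove that for every $(d-1)$-subset $B\subseteq[n]$ and every index $k$, some $2d$-set of $\Z_k$ contains $D(B)$. Indeed, given $S$ with $\width(S)\le d-1$, choose any $(d-1)$-set $B$ with $\pi(S)\subseteq B\subseteq[n]$ (possible since $d-1\le n$); then $S\subseteq[2]\times\pi(S)\subseteq[2]\times B=D(B)$, so any set containing $D(B)$ contains $S$. This reduction sidesteps the apparent difficulty that a set of very small width could, a priori, be contained in a large number of the correspondence sets that were deleted in passing from $\Yn$ to $\Zn$.

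\textbf{Main step.} Now fix $B$ of size $d-1$ and a layer index $k$. Since $\Wn$ has $m$-covering with $m\ge 3$, there are at least three distinct $d$-sets $C_1,C_2,C_3\in\W_k$ with $B\subseteq C_i$, so $D(C_1),D(C_2),D(C_3)$ all lie in $\X_k\subseteq\Y_k$ and all contain $D(B)$. The key claim is that at most two of these doubled sets lie in $\mathcal{R}$; granting it, some $D(C_i)$ survives in $\Z_k$ and contains $D(B)$, hence contains $S$, and since $k$ was arbitrary this proves width-covering. To establish the claim, suppose $D(C_i)=R_{k_i}^{a_i}$ with $a_i\in P_{k_i}$. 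Since $R_{k_i}^{a_i}=D(U_{k_i}\setminus\{a_i\})$ and $D$ is injective, $C_i=U_{k_i}\setminus\{a_i\}$, so $B\subseteq C_i\subseteq U_{k_i}$. If two of the deleted sets had distinct indices $k_i\ne k_j$, then $B\subseteq U_{k_i}\cap U_{k_j}$ would force $d-1=|B|\le|U_{k_i}\cap U_{k_j}|\le d-2$ by Corollary~\ref{corollary:Uintersection}, a contradiction; hence all the deleted $C_i$ share one common index $k'$. Each deleted $C_i$ then has the form $U_{k'}\setminus\{a\}$ with $a\in P_{k'}$ and $a\notin B$ (because $B\subseteq U_{k'}\setminus\{a\}$), and since $B\subseteq U_{k'}$ with $|U_{k'}|=d+1$ there are at most $|U_{k'}\setminus B|=2$ admissible values of $a$, hence at most two deleted $D(C_i)$. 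As $m\ge 3$, at least one $C_i$ is not deleted, completing the argument.

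\textbf{Main obstacle.} The substantive content is this counting claim, and in particular the step ruling out two distinct deletion indices $k_i\ne k_j$: that is exactly where the near-disjointness of the $P_k$'s (Proposition~\ref{proposition:P-intersection}, packaged as Corollary~\ref{corollary:Uintersection}) is essential. It is also the point at which the hypothesis $m\ge 3$ — rather than $m\ge 2$ — becomes necessary, since in the worst case two of the $m$ correspondence sets guaranteed by $m$-covering can indeed have been removed in forming $\Zn$. Everything else — the reduction to $(d-1)$-sets and the bookkeeping with $D$, $\pi$, and $\width$ — is routine.
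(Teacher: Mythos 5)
Your proof is correct and follows essentially the same approach as the paper: reduce to a $(d-1)$-set containing $\pi(S)$, invoke $m$-covering to obtain $m$ candidate correspondence sets, use Corollary~\ref{corollary:Uintersection} to confine any removed candidates to a single $U_{k'}$, and count that at most $|U_{k'}\setminus B|=2$ of them can have been deleted, so $m\ge 3$ suffices. The only cosmetic difference is that the paper organizes the argument into two cases depending on whether $\widetilde{S}$ lies in some $U_{k'}$ and works with the sufficient criterion ``$T\nsubseteq U_{k'}$ for all $k'$,'' while you characterize the removed sets exactly as $U_{k'}\setminus\{a\}$ with $a\in P_{k'}$ and count admissible $a$'s; both arrive at the same bound of two removals.
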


\begin{proof}
We need to show that for each layer $k$ and for each $S \subseteq [2] \times [n]$ of width at most $d-1$, the set $S$ is active on $\Z_k$. In fact we will show that there is a correspondence set $D(T) \in \Z_k$ that contains $S$. Note that to show that a correspondence set $D(T)$ belongs to $\Z_k$, it is sufficient to show that 
\[D(T) \in \Y_k \textup{ and } \forall \, {k'}=1,\dots,\delta:\, D(T) \nsubseteq D(U_k),\]
which in turn is equivalent to the condition 
\[T \in \W_k \textup{ and } \forall \, {k'}=1,\dots,\delta:\, T \nsubseteq U_{k'}.\]

Fix $k$ and $S$. Let $\widetilde{S}$ be a $(d-1)$-subset of $[n]$ that contains $\pi(S)$. By Corollary~\ref{corollary:Uintersection}, we see that $\widetilde{S}$ is contained in $U_{k'}$ for at most one value of ${k'}$.

First, suppose that $\widetilde{S}$ is not contained in any $U_{k'}$. By the $m$-covering property for $\Wn$, there are at least $m$ sets $T_1,\dots,T_m$ in $\W_k$ that contain $\widetilde{S}$. By assumption, none of these sets are contained in any $U_{k'}$, so we can take any $D(T_i)$ as the needed correspondence set in $\Z_k$.  

Now suppose that $\widetilde{S} \subseteq U_{k'}$ for a unique value of ${k'}$. Write $U_{k'} = \widetilde{S} \cup \{ s,t \}$. Again, there are at least $m$ $d$-sets $T_1, \dots, T_m$ in $\W_k$ that contain $\widetilde{S}$. However, there are only two $d$-subsets of $[n]$ that contain $\widetilde{S}$ and are contained in $U_{k'}$: $\widetilde{S} \cup \{s\}$ and $\widetilde{S} \cup \{t\}$. Since $m \geq 3$, there is at least one value of $i \in \{1,\dots,m\}$ such that $\widetilde{S} \subseteq T_i \nsubseteq U_{k'}$. For any such $i$, take $D(T_i)$ as the needed correspondence set in $\Z_k$.  
\end{proof}
 
\begin{theorem}\label{theorem:dimensionreductioninZ}
The family $\Zn$ has dimension reduction. 
\end{theorem}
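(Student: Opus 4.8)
The plan is to deduce dimension reduction from the structural lemmas already proved for $\Zn$, via a case split on the width of the set being made active. Since $\Zn$ is a layer family, its underlying graph is the path $\Z_1 - \Z_2 - \cdots - \Z_\delta$, so a set of layers induces a connected subgraph exactly when it is an interval of consecutive indices (the empty set and singletons being trivially connected). Hence, for a set $S \subseteq [2]\times[n]$ with $\left|S\right| \le 2d-1$, it suffices to show that the collection of layers on which $S$ is active forms such an interval.

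First I would handle the wide sets. If $\width(S) \ge d+1$, then Lemma~\ref{lemma:widthclassificationinZ}(1)--(2) shows that $S$ lies in at most one $2d$-set of $\Zn$: no correspondence set is wide enough, and no two distinct interpolation sets can both contain $S$ by Corollary~\ref{corollary:Uintersection}. So $S$ is active on at most one layer and its induced subgraph is connected. If $\width(S) = d$, then Lemma~\ref{lemma:adjacentlayerZ} applies: $S$ lies in at most one $2d$-set, or else in exactly two, which occur in the same layer or in two adjacent layers. In every case the layers on which $S$ is active form an interval of length at most two. This case is exactly where the passage from $\Yn$ to $\Zn$ by deleting the sets $\mathcal{R}$ does its work: without that deletion, a set $R_k^a$ and the interpolation set $I_k$ could both contain a width-$d$ set while living in non-adjacent layers, breaking the interval property.

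The only remaining possibility is $\width(S) \le d-1$, and here I would appeal directly to Proposition~\ref{proposition:widthcoveringinZ}: since $m \ge 3$, the family $\Zn$ has width-covering, so $S$ is active on \emph{every} layer (witnessed by a correspondence set), and its induced subgraph is the whole path. As every $S$ with $\left|S\right| \le 2d-1$ has $\width(S)$ in one of the three ranges $\le d-1$, $= d$, or $\ge d+1$, these cases are exhaustive and the theorem follows.

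I do not anticipate a real obstacle in assembling this argument: the substantive content has been front-loaded into Lemma~\ref{lemma:widthclassificationinZ}, Lemma~\ref{lemma:adjacentlayerZ}, and especially Proposition~\ref{proposition:widthcoveringinZ}, whose proof is where the hypothesis $m \ge 3$ and the careful tracking of the removed sets genuinely matter. The two points requiring a little care in the write-up are that connectedness of an induced subgraph of a path means ``consecutive layers'' (so that the at-most-two-adjacent-layers conclusion of Lemma~\ref{lemma:adjacentlayerZ} actually suffices), and that the three width ranges exhaust all sets of size at most $2d-1$, which is immediate.
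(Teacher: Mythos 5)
Your proposal is correct and follows exactly the same case split on $\width(S)$ that the paper uses, invoking the same three results (Lemma~\ref{lemma:widthclassificationinZ} for width $\ge d+1$, Lemma~\ref{lemma:adjacentlayerZ} for width $d$, and Proposition~\ref{proposition:widthcoveringinZ} for width $\le d-1$). There is no substantive difference from the paper's proof.
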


\begin{proof}
Let $S \subseteq [2] \times [n]$ with $\left| S \right| \leq 2d-1$. If $\width(S) \leq d-1$, then width-covering implies that every layer of $\Zn$ includes a $2d$-set containing $S$. If $\width(S)=d$, then Lemma~\ref{lemma:adjacentlayerZ} implies that the layers on which $S$ are active are consecutive.  Finally, if $\width(S) = d+1$ or $\width(S) \geq d+2$, then again by the first two cases of Lemma~\ref{lemma:widthclassificationinZ}, at most one set in $\Zn$ contains $S$. In all cases, the set of layers that contain $S$ is an interval. 
 \end{proof}

The layer family $\Zn$ has width-covering, dimension reduction, strong adjacency, and endpoint count. Its diameter is $\frac{1}{m}$ times the diameter of the original connected layer family $\V$ and its number of symbols is twice as large. So the Hirsch ratio of $\Zn$ is within a constant factor of the Hirsch ratio of $\Vn$ which is $\Omega\left( \frac{1}{\log n}\right)$.  

\section{Modifying the second Eisenbrand et al. Construction}\label{section:Eisenbrand2}

In this section, we start with another construction of Eisenbrand et al.~from~\cite{Eisenbrand:Diameter-of-Polyhedra} and construct subset partition graphs satisfying strong adjacency, endpoint-count, and dimension reduction with superlinear diameter. Recall that $\Lambda(n,d)$ denotes the maximal number of layers in a $d$-dimensional connected layer family on $n$ symbols. 

As in Section~\ref{section:Eisenbrand1}, we will consider layer families of dimension $d$ and of dimension $2d$ at the same time.
Layer families of dimension $d$ will be defined on the symbol set $\mathbf{A} \cup \mathbf{B}$ on two disjoint sets of cardinality $\frac{n}{2}$ each, while layer families of dimension $2d$ will be defined on the symbol set $[2] \times (\mathbf{A} \cup \mathbf{B})$. Our two-row diagrams will now be partitioned with elements of $\mathbf{A}$ on the left and elements of $\mathbf{B}$ on the right. As much as it is possible, we use $C$ and $E$ to denote subsets of $\mathbf{A} \cup \mathbf{B}$, $S$ and $T$ to denote subsets of $[2] \times (\mathbf{A} \cup \mathbf{B})$. The doubling operation $D$ and the vertical projection $\pi$ are defined analogously.

We define the notion of a \defn{sectioned layer family}. Given a layer family $\Ln$, a \defn{sectioning} of $\Ln$ is a function from its layers to the non-negative integers. All layers assigned the same integer $j$ are in \defn{section} $j$.

The second construction of Eisenbrand et al.~(see~\cite{Eisenbrand:Diameter-of-Polyhedra}) is a sequence of connected layer families (indexed by an increasing $n$, which is always a multiple of $4$). For infinitely many $n$, they construct a connected layer family $\Vn$ on $n$ symbols of dimension $d=\frac{n}{4}$.
The construction starts with two disjoint symbol sets $\mathbf{A}$ and $\mathbf{B}$ of size $n'=n/2$ each. We call a subset $C$ of $\mathbf{A} \cup \mathbf{B}$ an $(a,b)$-set if it contains exactly $a$ elements of $\mathbf{A}$ and $b$ elements of $\mathbf{B}$. For $C \subseteq \mathbf{A} \cup \mathbf{B}$, we define the $\mathbf{A}$-part and $\mathbf{B}$-part of $C$ to be $C \cap \mathbf{A}$ and $C \cap \mathbf{B}$, respectively. Finally, for sets $S \subseteq [2] \times (\mathbf{A} \cup \mathbf{B})$, define $\width(S) = \left| \pi(S) \right|$, $\Awidth(S) = \left| \pi(S) \cap \mathbf{A} \right|$, and $\Bwidth(S) = \left| \pi(S) \cap \mathbf{B} \right|$.  

We will need to extend the idea of covering to a bigraded notion that applies to sectioned layer families. Given a layer family 
\[\Ln = (\L_{1,1},\L_{1,2},\dots,\L_{1,\ell_j},\L_{2,1},\dots,\L_{2,\ell_2},\dots,\L_{d-1,\ell_{d-1}}),\]
we define the following properties:
\begin{itemize} 
\item \textbf{sectioned-covering:} 
The layer family $\Ln$ has sectioned-covering if for each subset $C \subseteq \mathbf{A} \cup \mathbf{B}$ with $|C| \leq d-1$, if $|C \cap \mathbf{A}| \leq d-j$ and $|C \cap \mathbf{B}| \leq j$, then every layer of the form $\L_{j,k}$ has a $d$-set $E$ such that $C \subseteq E$. 

\item \textbf{$m$-sectioned-covering:} 
Fix a positive integer $m$. The layer family $\Ln$ has $m$-sectioned-covering if for each subset $C \subseteq \mathbf{A} \cup \mathbf{B}$ with $|C| \leq d-1$, if $|C \cap \mathbf{A}| \leq d-j$ and $|C \cap \mathbf{B}| \leq j$, then every layer of the form $\L_{j,k}$ has a $m$ distinct $d$-sets $E_1,\dots,E_m$ such that $C \subseteq E_i$. (The $m$-sectioned-covering property, or \defn{multiple sectioned-covering property}, implies sectioned-covering which implies dimension reduction. For the latter implication, see Lemma 4.2 in ~\cite{Eisenbrand:Diameter-of-Polyhedra}.)
\end{itemize}

The layers $\V_{1,1}, \dots, \V_{1,\ell_1}, \dots, \V_{d-1,\ell_{d-1}}$ are obtained by stacking $d-1$ \defn{meshes} (defined below) together.
The layers $\V_{j,1}, \dots, \V_{j,\ell_j}$ are the layers in the $j$th mesh $\mathscr{M}(\mathbf{A},d-j; \mathbf{B}, j)$, each of which consists of a collection of $(d-j,j)$-sets.
 
The number $\ell_j$ of layers in the $j$th mesh satisfies the following inequality:

\begin{proposition}[\cite{Eisenbrand:Diameter-of-Polyhedra}]\label{proposition:2nd-layers-in-mesh}
Let $n$ be a multiple of $4$ and $\mathbf{A},\mathbf{B}$ disjoint sets of size $n/2$. The number $\ell_j$ of layers in the $j$th mesh $\mathscr{M}(\mathbf{A},d-j; \mathbf{B}, j)$ satisfies
\begin{equation}
\ell_j \geq \min\left\{
\left\lfloor \frac{\frac{n}{2}-(d-j-1)}{3 \ln \frac{n}{2}} \right\rfloor
,
\left\lfloor \frac{\frac{n}{2}-(j-1)}{3 \ln \frac{n}{2}} \right\rfloor
\right\}.
\end{equation}
\end{proposition}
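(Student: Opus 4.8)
The plan is to recall the mesh construction of Eisenbrand et al.\ and to re-derive its layer count from the covering-design packing behind their Theorem~4.1. Throughout write $n' = n/2$, $a = d-j$, and $b = j$, so the $j$th mesh $\mathscr{M}(\mathbf{A},a;\mathbf{B},b)$ is a sectioned layer family all of whose $d$-sets are $(a,b)$-sets, and the sectioned-covering requirement on one of its layers unwinds --- using $|C| \le d-1 = a+b-1$, which forbids $|C \cap \mathbf{A}| = a$ and $|C \cap \mathbf{B}| = b$ at once --- into a two-sided condition: for every $C_A \in \binom{\mathbf{A}}{a-1}$ and every $C_B \in \binom{\mathbf{B}}{b}$, some $(a,b)$-set of the layer contains $C_A \cup C_B$, and symmetrically for every $C_A \in \binom{\mathbf{A}}{a}$ and every $C_B \in \binom{\mathbf{B}}{b-1}$. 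Covering these maximal admissible sets $C$ covers all smaller ones automatically.

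First I would make the internal structure of a mesh layer explicit. Fixing a $\mathbf{B}$-part $E_B \in \binom{\mathbf{B}}{b}$, the first half of the two-sided condition says exactly that the $\mathbf{A}$-parts of the layer's sets whose $\mathbf{B}$-part is $E_B$ form an $(n',a,a-1)$-covering design on $\mathbf{A}$; dually, for each fixed $E_A \in \binom{\mathbf{A}}{a}$ the $\mathbf{B}$-parts occurring with it form an $(n',b,b-1)$-covering design on $\mathbf{B}$. So producing $\ell_j$ layers of the mesh amounts to producing $\ell_j$ pairwise disjoint collections of $(a,b)$-sets, each meeting both of these families of covering constraints simultaneously.

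Next I would extract the count. Restricting all the layers to a single fixed $\mathbf{B}$-part turns pairwise disjointness of the layers into pairwise disjointness of the induced $(n',a,a-1)$-covering designs on $\mathbf{A}$ (and symmetrically for $\mathbf{B}$); by the disjoint covering designs produced in the proof of Theorem~4.1 of Eisenbrand et al.~\cite{Eisenbrand:Diameter-of-Polyhedra}, at least $\lfloor (n'-(a-1))/\ln n' \rfloor$ pairwise disjoint $(n',a,a-1)$-covering designs are available on $\mathbf{A}$ and at least $\lfloor (n'-(b-1))/\ln n' \rfloor$ on $\mathbf{B}$. The layers are then assembled by a coordinated (diagonal) selection from these two supplies; this coordination costs only a constant factor, and allowing a factor of $3$ yields $\ell_j \ge \min\{ \lfloor (n'-(a-1))/(3\ln n') \rfloor, \lfloor (n'-(b-1))/(3\ln n') \rfloor \}$, which is the claim once we substitute $a=d-j$, $b=j$, $n'=n/2$.

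The step I expect to be the main obstacle is precisely this coordination: the \emph{achievability} of $\min$-many layers that are at once pairwise disjoint and two-sided covering. One must choose, coherently over all $\binom{n'}{b}$ possible $\mathbf{B}$-parts, a system of pairwise disjoint $\mathbf{A}$-covering designs, and coherently over all $\binom{n'}{a}$ possible $\mathbf{A}$-parts, a system of pairwise disjoint $\mathbf{B}$-covering designs, so that the two systems agree on which $(a,b)$-set lands in which layer. I expect this to go through either by a round-robin / Latin-square assignment or, as in Eisenbrand et al., by a single randomized construction with an added union bound across both directions and across the disjointness requirement --- and it is this extra union bound that accounts for the constant $3$ in the denominator rather than the $1$ of Theorem~4.1.
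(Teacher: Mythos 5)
Your proposal takes a genuinely different route from the paper's, and the key claim about where the constant $3$ comes from is not right. The paper's proof treats the mesh construction itself as a black box from Eisenbrand et al.: the $j$th mesh $\mathscr{M}(\mathbf{A},d-j;\mathbf{B},j)$ has \emph{exactly} $\min\bigl\{\Lambda(\tfrac{n}{2},d-j),\Lambda(\tfrac{n}{2},j)\bigr\}$ layers, where $\Lambda$ is the disjoint-covering-design count on a single symbol set. The cited Theorem~4.1 of \cite{Eisenbrand:Diameter-of-Polyhedra} already reads $\Lambda(q,r+1)\geq\bigl\lfloor(q-r)/(3\ln q)\bigr\rfloor$, and the proposition follows by applying it with $r+1=d-j$ and with $r+1=j$ and taking the minimum. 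There is no further union bound, and no loss is paid when combining the two sides.

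This is where your plan goes wrong. You attribute the factor of $3$ to the cost of coordinating a family of pairwise disjoint $(n',a,a-1)$-covering designs on $\mathbf{A}$ with a family on $\mathbf{B}$; but this coordination is in fact free. If $\binom{\mathbf{A}}{a}$ is $L_A$-colored so that each color class contains one of the disjoint covering designs (leftover $a$-sets assigned arbitrarily), and likewise for $\binom{\mathbf{B}}{b}$ with $L_B$ colors, then for $L=\min\{L_A,L_B\}$ the round-robin rule ``place $E_A\cup E_B$ in layer $\phi(E_A)+\psi(E_B)\bmod L$'' partitions all $(a,b)$-sets into $L$ layers, each of which covers simultaneously on the $\mathbf{A}$-side and the $\mathbf{B}$-side: fixing either part forces the other to range over a full color class, which contains a covering design. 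No penalty, no extra union bound, no additional randomness. So your heuristic that ``coordination costs a constant factor''---and your account of the $3$---does not match how the bound actually arises. Finally, note that you leave the crucial step at ``I expect this to go through by a round-robin\dots or a single randomized construction,'' so even setting aside the misattribution, the proposal stops short of a proof.
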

\begin{proof}
The $j$th mesh $\mathscr{M}(\mathbf{A},d-j; \mathbf{B}, j)$ has 
\[\min\left\{\Lambda\left(\frac{n}{2},d-j\right), \Lambda\left(\frac{n}{2},j\right) \right\}\]
layers. (In~\cite{Eisenbrand:Diameter-of-Polyhedra}, equation (4) should say $l = \min\{DC(m,i,i-1),DC(m,j,j-1)\}$.)

Theorem 4.1 in~\cite{Eisenbrand:Diameter-of-Polyhedra} provides a construction which shows that
\[\Lambda(q,r+1) \geq \left\lfloor \frac{q-r}{3 \ln q} \right\rfloor\]
Applying this theorem with $(q,r)=(\frac{n}{2},d-j)$ and $(q,r)=(\frac{n}{2},j)$ gives the desired result.
\end{proof}
Thus, the diameter $\delta(\Vn)=\ell_1 + \dots + \ell_{d-1}-1$ of $\Vn$ is in $\Omega(n^2/\log n)$, and therefore the Hirsch ratio of $\Vn$ is in $\Omega(n/\log n)$.

In~\cite{Eisenbrand:Diameter-of-Polyhedra} the layers $\V_{j,1}, \dots, \V_{j,\ell_j}$ are in the $j$th mesh $\mathscr{M}(\mathbf{A},d-j; \mathbf{B}, j)$. To obtain a sectioned layer family, let each layer be a vertex, and assign section $j$ to each layer of the form $\V_{j,k}$. In other words, the layers in the $j$th mesh of the connected layer family are the vertices in the $j$th section of the layer family.

Eisenbrand et al. note that if $C$ is a $c$-subset ($c \leq d-1$) of $\mathbf{A} \cup \mathbf{B}$ satisfying $|C \cap \mathbf{A}| \leq a$ and $|C \cap \mathbf{B}| \leq b$, then there is a $d$-set $E$ in each layer of the mesh $\mathscr(\mathbf{A},a;\mathbf{B},b)$ which contains $C$. By substituting $a=d-j$ and $b=j$, there is a $d$-set in each layer of the $j$th mesh containing $C$. That is, the family $\Vn$ satisfies sectioned-covering, which implies dimension reduction. 

Our goal is to modify this construction to preserve dimension reduction and add the properties of endpoint count and strong adjacency, while not reducing the diameter by more than a constant factor.  The steps to do so are analogous to those in Section~\ref{section:Eisenbrand1}, though more complicated because of the bigraded sets and the different conditions that apply to different sections. 

Before performing the modifications, we estimate the number of layers in the family $\Vn$.

\begin{lemma}\label{lemma:2nd-section-size}
Fix a real number $\epsilon > 0$. If $d$ is sufficiently large, $d=\frac{n}{4}$, and $\epsilon d < j < (1-\epsilon)d$, then the number of layers in section~$j$ of $\Vn$ is at least
\[\left\lfloor\frac{\frac{n}{4}}{3\ln\frac{n}{2}}\right\rfloor.\]
\end{lemma}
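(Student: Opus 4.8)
The plan is to apply Proposition~\ref{proposition:2nd-layers-in-mesh} directly and then simplify the resulting bound under the hypotheses $d = \frac{n}{4}$ and $\epsilon d < j < (1-\epsilon)d$. First I would write out the estimate from Proposition~\ref{proposition:2nd-layers-in-mesh}, namely that the number $\ell_j$ of layers in section~$j$ is at least the minimum of $\left\lfloor \frac{\frac{n}{2}-(d-j-1)}{3\ln\frac{n}{2}} \right\rfloor$ and $\left\lfloor \frac{\frac{n}{2}-(j-1)}{3\ln\frac{n}{2}} \right\rfloor$. The goal is to show each of these two floor expressions is at least $\left\lfloor \frac{\frac{n}{4}}{3\ln\frac{n}{2}} \right\rfloor$, so that the minimum is as well.

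Since $\left\lfloor\cdot\right\rfloor$ is monotone, it suffices to compare numerators: I would check that both $\frac{n}{2}-(d-j-1)$ and $\frac{n}{2}-(j-1)$ are at least $\frac{n}{4}$ once $d$ is large. For the second, $\frac{n}{2}-(j-1) \geq \frac{n}{4}$ is equivalent to $j \leq \frac{n}{4}+1 = d+1$, which holds since $j < (1-\epsilon)d < d$. For the first, using $d = \frac{n}{4}$ we get $\frac{n}{2}-(d-j-1) = \frac{n}{2} - d + j + 1 = d + j + 1$, and $d+j+1 \geq \frac{n}{4} = d$ holds trivially (indeed with lots of room, since $j > \epsilon d > 0$). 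So in fact the $\epsilon$ lower bound on $j$ is only needed to keep $j$ a legitimate section index (i.e.\ $1 \le j \le d-1$) for $d$ large, while the upper bound $j < (1-\epsilon)d$ is what controls the second term; the bound $\left\lfloor \frac{\frac{n}{4}}{3\ln\frac n2}\right\rfloor$ then follows from monotonicity of the floor.

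I would then note that for $d$ sufficiently large the hypotheses also guarantee $1 \le j \le d-1$, so section~$j$ genuinely exists and Proposition~\ref{proposition:2nd-layers-in-mesh} applies; and that $n$ being a multiple of $4$ is inherited from the setup of the second Eisenbrand et al.\ construction. I do not anticipate a real obstacle here: the only mild care needed is making sure the floor comparisons go through cleanly (they do, since each numerator comfortably exceeds $\frac{n}{4}$), and checking that the constraints on $j$ are compatible with $j$ indexing an actual mesh. This is essentially a bookkeeping lemma extracting a clean uniform lower bound on section sizes in the ``middle'' range of $j$, to be used when summing over sections to estimate the diameter of the modified construction.
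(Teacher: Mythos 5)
Your proof is correct and follows essentially the same route as the paper's: apply Proposition~\ref{proposition:2nd-layers-in-mesh}, reduce the comparison to the numerators using $d=\frac{n}{4}$, and conclude by monotonicity of the floor. Your side observation is also accurate: for this particular lemma the lower bound $j>\epsilon d$ is not really used, and the upper bound could be relaxed to $j\le d-1$; the symmetric $\epsilon$-window is chosen because that is the range of sections that survive into $\Wn$, not because the lemma needs it.
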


\begin{proof}
By Proposition~\ref{proposition:2nd-layers-in-mesh}, we find lower bounds for 
\[\left\lfloor \frac{\frac{n}{2}-(d-j-1)}{3 \ln \frac{n}{2}} \right\rfloor \; \textup{and} \; \left\lfloor \frac{\frac{n}{2}-(j-1)}{3 \ln \frac{n}{2}} \right\rfloor.\]

For the first expression, using the fact that $j > \epsilon d = \epsilon \cdot\frac{n}{4}$,
\[
\frac{\frac{n}{2}-(d-j-1)}{3 \ln \frac{n}{2}} 
=
\frac{\frac{n}{4}+j+1}{3 \ln \frac{n}{2}} 
>
\frac{ \frac{n}{4}+ \epsilon \cdot\frac{n}{4}+1}{3 \ln \frac{n}{2}}
>
\frac{\frac{n}{4}}{3\ln\frac{n}{2}}.
\]
Similarly, for the second expression, using $j < (1-\epsilon)d$ implies $-j>(\epsilon-1)\frac{n}{4}$, thus
\[
\frac{\frac{n}{2}-(j-1)}{3 \ln \frac{n}{2}} 
>
\frac{\frac{n}{2}+(\epsilon-1)\frac{n}{4}+1}{3 \ln \frac{n}{2}}
=
\frac{ \frac{n}{4}+ \epsilon \cdot\frac{n}{4}+1}{3 \ln \frac{n}{2}}
>
\frac{\frac{n}{4}}{3\ln\frac{n}{2}}.
\]
The desired inequality follows.
\end{proof}
This immediately implies:
\begin{corollary}\label{corollary:2nd-section-size-m}
Fix a natural number $m \geq 1$ and a real number $\epsilon > 0$. If $d$ is sufficiently large, $d=\frac{n}{4}$, and $\epsilon d < j < (1-\epsilon)d$, then section $j$ of $\Vn$ contains at least $m$ layers.\hfill\qedsymbol
\end{corollary}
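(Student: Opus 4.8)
The proof of Corollary~\ref{corollary:2nd-section-size-m} is a direct consequence of Lemma~\ref{lemma:2nd-section-size}, so the plan is simply to extract the quantitative bound and observe that it exceeds $m$ once $n$ (equivalently $d$) is large enough.

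First I would invoke Lemma~\ref{lemma:2nd-section-size} with the same $\epsilon$: for $d$ sufficiently large with $d = \frac{n}{4}$ and $\epsilon d < j < (1-\epsilon)d$, section $j$ of $\Vn$ has at least $\left\lfloor \frac{n/4}{3\ln(n/2)} \right\rfloor$ layers. Next I would note that the function $n \mapsto \frac{n/4}{3\ln(n/2)}$ tends to infinity as $n \to \infty$ (the numerator grows linearly while the denominator grows only logarithmically). Hence there is a threshold $N_m$ such that for all $n \geq N_m$ we have $\frac{n/4}{3\ln(n/2)} \geq m+1 > m$, and therefore $\left\lfloor \frac{n/4}{3\ln(n/2)} \right\rfloor \geq m$. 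Taking $d$ large enough to satisfy both the hypothesis of Lemma~\ref{lemma:2nd-section-size} and the inequality $n = 4d \geq N_m$ gives the conclusion.

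There is essentially no obstacle here: the only mild point of care is that ``sufficiently large'' now must absorb two conditions (the one inherited from Lemma~\ref{lemma:2nd-section-size} and the new one forcing the floor to be at least $m$), but since both are eventual conditions on $n = 4d$, their conjunction is again an eventual condition, which is exactly what the statement asserts. I would present this in two or three sentences rather than as a formal displayed argument, which is presumably why the authors marked it with \qedsymbol rather than giving a separate proof block.
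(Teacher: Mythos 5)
Your argument is correct and matches the paper's: the paper presents the corollary as an immediate consequence of Lemma~\ref{lemma:2nd-section-size}, and you have simply spelled out the (correct) reason, namely that $\left\lfloor \frac{n/4}{3\ln(n/2)} \right\rfloor \to \infty$ so it eventually exceeds any fixed $m$.
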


Fix $m \geq 1$ and $\epsilon$ such that $0 < \epsilon < 1/2$. Only later we will need to fix $\epsilon$, and for concreteness, we will later pick 
\begin{equation}\label{equation:epsilon14}
\textstyle\epsilon:=\frac14.
\end{equation}

\subsection{Obtaining $m$-sectioned-covering}

Let $\overleftarrow{\jmath}=\lceil \epsilon d\rceil$ and $\overrightarrow{\jmath}=\lfloor (1-\epsilon)d\rfloor$. 
For each $j$ such that $\overleftarrow{\jmath} \leq j \leq \overrightarrow{\jmath}$, write $\ell_j = \delta_j m + r_j$ with $0 \leq r_j < m$. 
By Corollary~\ref{corollary:2nd-section-size-m}, $\delta_j \geq 1$ for each such $j$. Construct a new family 
\[\Wn = (\W_{\overleftarrow{\jmath},1}, \W_{\overleftarrow{\jmath},2}, \dots, \W_{\overleftarrow{\jmath},\delta_{\overleftarrow{\jmath}}}, \W_{\overleftarrow{\jmath}+1,1}, \dots, \W_{\overrightarrow{\jmath},1}, \W_{\overrightarrow{\jmath},2}, \dots, \W_{\overrightarrow{\jmath},\delta_{\overrightarrow{\jmath}}})\]
by merging consecutive groups of $m$ layers which are in the same section. For $\W_{j,\delta_j}$, merge the last $m+r_j$ layers.
Thus the diameter $\delta(\Wn)$ of the new layer family $\Wn$ satisfies:
\begin{proposition}\label{proposition:2nd-diameter-W}
The diameter $\delta(\Wn)$ of $\Wn$ is at least
\[\frac{1-2\epsilon}{48m} \cdot \frac{n^2}{\ln \frac{n}{2}}-1.\]
\end{proposition}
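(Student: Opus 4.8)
The plan is to estimate $\delta(\Wn)$ by counting layers. The family $\Wn$ retains only the sections $j$ with $\overleftarrow{\jmath} \leq j \leq \overrightarrow{\jmath}$, and within each such section it merges groups of $m$ consecutive layers, so section $j$ of $\Wn$ has $\delta_j + 1$ layers where $\delta_j = \lfloor \ell_j / m \rfloor$ (from $\ell_j = \delta_j m + r_j$ with the last group absorbing the remainder). Hence the total number of layers of $\Wn$ is $\sum_{j=\overleftarrow{\jmath}}^{\overrightarrow{\jmath}} (\delta_j+1) \geq \sum_{j=\overleftarrow{\jmath}}^{\overrightarrow{\jmath}} \ell_j/m$, and $\delta(\Wn)$ is one less than this. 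So it suffices to bound $\sum_{j=\overleftarrow{\jmath}}^{\overrightarrow{\jmath}} \ell_j$ from below and divide by $m$.

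First I would invoke Lemma~\ref{lemma:2nd-section-size}: for $d$ large with $d = \tfrac{n}{4}$ and $\epsilon d < j < (1-\epsilon)d$, each $\ell_j \geq \lfloor \tfrac{n/4}{3\ln(n/2)}\rfloor$. The indices $j$ in the range $\overleftarrow{\jmath} \leq j \leq \overrightarrow{\jmath}$, i.e. $\lceil\epsilon d\rceil \leq j \leq \lfloor(1-\epsilon)d\rfloor$, number at least $(1-2\epsilon)d - 1 = (1-2\epsilon)\tfrac{n}{4} - 1$; for $d$ large this is at least, say, $(1-2\epsilon)\tfrac{n}{4} \cdot (1 - o(1))$, and with a little slack one gets a clean bound like $\tfrac{(1-2\epsilon)}{8}n$ on the number of valid sections once $n$ is large enough. (A minor point: Lemma~\ref{lemma:2nd-section-size} has strict inequalities $\epsilon d < j < (1-\epsilon)d$ while the endpoints $\overleftarrow{\jmath},\overrightarrow{\jmath}$ are the ceiling and floor; one either restricts to the open interior, losing at most two sections, or notes the endpoints also satisfy the needed mesh bound directly from Proposition~\ref{proposition:2nd-layers-in-mesh} since $\epsilon < 1/2$. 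Either way the count changes by $O(1)$, absorbed into the "$d$ sufficiently large" hypothesis.)

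Putting the pieces together: the number of layers of $\Wn$ is at least $\tfrac{1}{m}\sum_j \ell_j \geq \tfrac{1}{m} \cdot \bigl((1-2\epsilon)\tfrac{n}{4}\bigr) \cdot \bigl\lfloor\tfrac{n/4}{3\ln(n/2)}\bigr\rfloor$. Using $\lfloor x \rfloor \geq x - 1 \geq \tfrac{x}{2}$ for $x$ large (which holds since $\tfrac{n/4}{3\ln(n/2)} \to \infty$), or more carefully $\lfloor x\rfloor > x/2$ once $x > 2$, this is at least $\tfrac{1}{m}\cdot(1-2\epsilon)\tfrac{n}{4}\cdot\tfrac{n/4}{6\ln(n/2)} = \tfrac{(1-2\epsilon)}{96m}\cdot\tfrac{n^2}{\ln(n/2)}$; I would sharpen the constant (or the "sufficiently large" threshold) to reach $\tfrac{(1-2\epsilon)}{48m}\cdot\tfrac{n^2}{\ln(n/2)}$, and then $\delta(\Wn)$ is that quantity minus $1$. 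I do not expect a genuine obstacle here — it is a bookkeeping argument — but the one place to be careful is reconciling the strict-inequality hypothesis of Lemma~\ref{lemma:2nd-section-size} with the closed index range $[\overleftarrow{\jmath},\overrightarrow{\jmath}]$ and making sure the floor-function losses and the $O(1)$ discrepancies in the section count are all swept into the "if $d$ is sufficiently large" clause so that the stated constant $\tfrac{1-2\epsilon}{48m}$ comes out cleanly.
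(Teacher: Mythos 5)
Your approach is the same as the paper's: invoke Lemma~\ref{lemma:2nd-section-size} for a lower bound on $\ell_j$ in each admissible section, count roughly $(1-2\epsilon)\tfrac{n}{4}$ sections, divide by $m$ for the merging, and subtract one. You are in fact more careful than the paper, which silently drops the floor from the lemma's bound, treats $\lfloor\ell_j/m\rfloor$ as $\ell_j/m$, and takes the number of sections to be exactly $(1-2\epsilon)\tfrac{n}{4}$; the bookkeeping concerns you raise (reconciling the strict-inequality hypothesis with the closed index range, and absorbing floor-function losses into a sufficiently-large-$n$ clause) are real, and the stated constant $\tfrac{1-2\epsilon}{48m}$ should indeed be read modulo such a threshold.
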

\begin{proof}
By Lemma~\ref{lemma:2nd-section-size}, each section $j$ of $\Vn$ between
$\overleftarrow{\jmath}=\lceil \epsilon d\rceil$ and $\overrightarrow{\jmath}=\lfloor (1-\epsilon)d\rfloor$ contains at least
\[\frac{\frac{n}{4}}{3\ln\frac{n}{2}}\]
layers, so each section of $\Wn$ contains at least
\[\frac1m \cdot \frac{\frac{n}{4}}{3\ln\frac{n}{2}}\]
layers. The total number of layers is at least
\[
\frac1m \sum_{\epsilon d < j < (1-\epsilon)d} \frac{\frac{n}{4}}{3 \ln \frac{n}{2}} 
= 
\frac1m\left((1-\epsilon)\frac{n}{4}-\epsilon\cdot\frac{n}{4}\right)\frac{\frac{n}{4}}{3\ln\frac{n}{2}}\]
thus the diameter is at least this minus one.
\end{proof}
For fixed $m$ and $\epsilon$, the diameter $\delta(\Wn)$ of $\Wn$ is in $\Omega(n^2/\log n)$, and therefore the Hirsch ratio of $\Wn$ is in $\Omega(n/\log n)$.

In the new layer family $\Wn$, the sectioning is determined by section number of the layers which were merged together, and so we still have well-defined sections $\overleftarrow{\jmath},\dots,\overrightarrow{\jmath}$. Then the family $\Wn$ satisfies $m$-sectioned covering (which implies dimension reduction).

If $m=1$, $\overleftarrow{\jmath}=1$, and $\overrightarrow{\jmath}=\delta(\Vn)$ then $\Wn=\Vn$. Only several steps later will we need to choose $m>1$ (in fact $m \geq 13$.)  From now, we will choose $\epsilon = \frac14$, so $\overleftarrow{\jmath} = \lceil{\frac{d}{4}}\rceil$ and $\overrightarrow{\jmath} = \lfloor{\frac{3d}{4}}\rfloor$. 

\begin{theorem}
The layer family $\Wn$ satisfies $m$-sectioned covering and thus dimension reduction. The diameter of $\Wn$ is in $\Omega(n^2/\log n)$.\hfill\qedsymbol
\end{theorem}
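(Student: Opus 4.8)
The plan is to assemble the two claims from material already in place. For the $m$-sectioned-covering assertion, I would argue as follows. Fix a subset $C \subseteq \mathbf{A} \cup \mathbf{B}$ with $|C| \leq d-1$ and with $|C \cap \mathbf{A}| \leq d-j$ and $|C \cap \mathbf{B}| \leq j$, and fix an arbitrary layer $\W_{j,k}$ of $\Wn$. By construction, $\W_{j,k}$ is the union of either $m$ or (when $k = \delta_j$) $m + r_j \geq m$ consecutive layers of $\Vn$, all lying in section $j$, say $\V_{j,k_0}, \dots, \V_{j,k_0 + m - 1}$; here Corollary~\ref{corollary:2nd-section-size-m} guarantees $\delta_j \geq 1$, so this merging is well defined for every $j$ with $\overleftarrow{\jmath} \leq j \leq \overrightarrow{\jmath}$. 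Since $\Vn$ satisfies sectioned-covering and each $\V_{j,k_0+i}$ lies in section $j$, each of these layers contains a $d$-set $E_i$ with $C \subseteq E_i$. Because $\Vn$ is a layer family, its layers partition the set $\A$ of $d$-sets, so the $E_0, \dots, E_{m-1}$ --- drawn from pairwise disjoint collections --- are pairwise distinct. Hence $\W_{j,k}$ contains at least $m$ distinct $d$-sets containing $C$, which is precisely $m$-sectioned-covering. As recorded in its definition, $m$-sectioned-covering implies sectioned-covering, which implies dimension reduction by Lemma 4.2 of~\cite{Eisenbrand:Diameter-of-Polyhedra}; hence $\Wn$ has dimension reduction.

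For the diameter bound I would simply quote Proposition~\ref{proposition:2nd-diameter-W}: with $m$ a fixed constant and $\epsilon = \frac14$ fixed as in~\eqref{equation:epsilon14}, the stated lower bound $\frac{1-2\epsilon}{48m}\cdot\frac{n^2}{\ln(n/2)}-1$ lies in $\Omega(n^2/\log n)$.

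The only step that warrants care --- and the nearest thing to an obstacle --- is the pairwise distinctness of the $m$ sets $E_i$: one must invoke that the layers of $\Vn$ form a genuine partition of $\A$ rather than merely a cover, so that $d$-sets coming from $m$ distinct layers cannot coincide; without this the merge would yield only covering, not $m$-covering. Everything else is bookkeeping confirming that merging respects the section structure, exactly parallel to the unsectioned construction in Section~\ref{section:Eisenbrand1}.
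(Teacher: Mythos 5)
Your proposal is correct and takes essentially the same route as the paper: the paper supplies no explicit proof for this theorem (it is stated with a $\qedsymbol$), relying on the preceding discussion that merging $m$ layers within a section upgrades sectioned-covering to $m$-sectioned-covering and on Proposition~\ref{proposition:2nd-diameter-W} for the diameter bound. You have merely made explicit the one step the paper leaves implicit --- that the $m$ sets $E_i$ are pairwise distinct because the layers of $\Vn$ partition $\A$ --- which is exactly the right point to flag.
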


\subsection{Doubling}

For the next step, construct a new family
\[\Xn=(\X_{\overleftarrow{\jmath},1},\dots,\X_{\overleftarrow{\jmath},\delta_{\overleftarrow{\jmath}}},\X_{\overleftarrow{\jmath}+1,1},\dots,\X_{\overrightarrow{\jmath},\delta_{\overrightarrow{\jmath}}})\]
on $2n$ symbols of dimension $2d$ by duplicating a symbol every time it occurs in $\Wn$, exactly as in Section~\ref{section:Eisenbrand1}. That is,
\[\X_{j,k} = \{ D(C): C \in \W_{j,k}\} \]
and again we call each $2d$-set of the form $D(C) \subseteq [2] \times (\mathbf{A} \cup \mathbf{B})$ a \defn{correspondence set}.

\begin{theorem}
The layer family $\Xn$ has adjacency, endpoint count, $m$-sectioned-width-covering, and dimension reduction. The diameter of $\Xn$ is in $\Omega(n^2/\log n)$.\hfill\qedsymbol
\end{theorem}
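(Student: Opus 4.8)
The plan is to follow the template of the doubling step in Section~\ref{section:Eisenbrand1}, adapting each argument to the bigraded, sectioned setting, since $\Xn$ is obtained from $\Wn$ in exactly the same way as there. The single structural fact that drives everything is that every $2d$-set of $\Xn$ is a correspondence set $D(C)=[2]\times C$ with $\pi(D(C))=C$, hence of width exactly $d$; moreover the sets merged to form $\W_{j,k}$ are all $(d-j,j)$-sets (they come from the $j$th mesh), so the correspondence sets in $\X_{j,k}$ are precisely the $D(C)$ with $C$ a $(d-j,j)$-set.

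First I would dispatch adjacency and endpoint count. If $S$ is a $(2d-1)$-subset of a correspondence set $D(C)$, then $S$ omits exactly one element $(i,c)$ of $[2]\times C$, but the other copy $(3-i,c)$ remains, so $\pi(S)=C$; thus $S$ determines $C$ and lies in at most one $2d$-set of the whole family $\Xn$. Endpoint count (at most two, in fact at most one) is immediate, and adjacency follows because $|A\cap A'|=2d-1$ for $A,A'\in\Xn$ would exhibit a $(2d-1)$-set in two distinct $2d$-sets, forcing $A=A'$, which is trivially the same vertex.

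Next I would establish $m$-sectioned-width-covering by transporting the $m$-sectioned-covering of $\Wn$ through $\pi$ and $D$: given $S$ with $\width(S)\le d-1$, $\Awidth(S)\le d-j$, and $\Bwidth(S)\le j$, the projection $C:=\pi(S)$ satisfies $|C|\le d-1$, $|C\cap\mathbf{A}|\le d-j$, and $|C\cap\mathbf{B}|\le j$, so each layer $\W_{j,k}$ contains $m$ distinct $d$-sets $E_1,\dots,E_m\supseteq C$; then $D(E_1),\dots,D(E_m)\in\X_{j,k}$ are distinct and $S\subseteq[2]\times C\subseteq D(E_i)$ for each $i$. For dimension reduction I would split on $\width(S)$ for $|S|\le 2d-1$. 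If $\width(S)\ge d+1$ then no correspondence set contains $S$; if $\width(S)=d$ then $D(\pi(S))$ is the only candidate and sits in at most one layer; in both cases the layers containing a $2d$-superset of $S$ form an interval (empty or a singleton). If $\width(S)\le d-1$, observe that a correspondence set $D(C)\supseteq S$ with $C$ in section $j$ forces $\Awidth(S)\le|C\cap\mathbf{A}|=d-j$ and $\Bwidth(S)\le|C\cap\mathbf{B}|=j$; hence the only sections that can contain a superset of $S$ are those $j\in[\overleftarrow{\jmath},\overrightarrow{\jmath}]$ with $\Bwidth(S)\le j\le d-\Awidth(S)$, and by $m$-sectioned-width-covering every layer of every such section does contain a superset. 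So the layers containing a $2d$-superset of $S$ are exactly the layers lying in a contiguous block of sections, i.e.\ an interval, and each induced subgraph $G_F$ is connected.

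Finally, doubling neither merges nor adds layers, so $\Xn$ has exactly as many layers as $\Wn$; by Proposition~\ref{proposition:2nd-diameter-W}, $\delta(\Xn)=\delta(\Wn)\ge\frac{1-2\epsilon}{48m}\cdot\frac{n^2}{\ln\frac{n}{2}}-1$, which is in $\Omega(n^2/\log n)$ for fixed $m$ and $\epsilon$. The main thing to get right — and the one genuine novelty over Section~\ref{section:Eisenbrand1} — is the dimension-reduction bookkeeping: one must check that the sections for which $S$ is ``eligible'' (meets both width constraints) form a contiguous block, and that containment in a correspondence set certifies eligibility, so that no gap can appear once the section range is truncated to $[\overleftarrow{\jmath},\overrightarrow{\jmath}]$. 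This is the sectioned analogue of ``sectioned-covering implies dimension reduction'' (Lemma 4.2 of~\cite{Eisenbrand:Diameter-of-Polyhedra}) and should go through without surprises.
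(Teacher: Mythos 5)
Your proof is correct and matches the paper's implicit argument exactly: adjacency and endpoint count are immediate because every $(2d-1)$-subset of a doubled set determines its unique $2d$-superset, and the width decomposition for dimension reduction is the sectioned analogue of the Section~\ref{section:Eisenbrand1} discussion, with the bigraded bookkeeping that you correctly fill in. The paper leaves this theorem unproved as essentially self-evident from the construction, and your fleshed-out version is the intended reading.
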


\subsection{Obtaining linkage}

As in Section~\ref{section:Eisenbrand1}, our next step is to add interpolation sets to obtain linkage. The layer family $\Yn$ is obtained from $\Xn$ by adding one \defn{interpolation set} $I_{j,k}$ to each layer $\X_{j,k}$ to form $\Y_{j,k}$. We do not need to add an interpolation set to the very last layer $\X_{\overrightarrow{\jmath},\delta_{\overrightarrow{\jmath}}}$.

For each $j$ such that $\overleftarrow{\jmath} \leq j \leq \overrightarrow{\jmath}$ and for each $k$ such that $1 \leq k \leq \delta_j-1$, we define $I_{j,k}$ as follows. 
First choose a random $(d-j-1)$-subset $P_{j,k} \subseteq \mathbf{A}$ and a random $j$-subset $Q_{j,k} \subseteq \mathbf{B}$. 
By the sectioned-covering property for $\Wn$, there exist elements $f_{j,k},g_{j,k} \in \mathbf{A}$ such that
\[D(P_{j,k} \cup \{f_{j,k}\} \cup Q_{j,k}) \in \X_{j,k} \quad\text{and}\quad D(P_{j,k} \cup \{g_{j,k}\} \cup Q_{j,k}) \in \X_{j,k+1}.\]
As before, define 
\begin{equation}\label{equation:Fjk-Gjk}
F_{j,k} = P_{j,k} \cup \{f_{j,k}\} \quad\text{and}\quad G_{j,k} = P_{j,k} \cup \{g_{j,k}\}.
\end{equation}

Let $I_{j,k} \subseteq [2] \times (\mathbf{A} \cup \mathbf{B})$ be the $2d$-set whose first row is $F_{j,k} \cup Q_{j,k}$ and whose second row is $G_{j,k} \cup Q_{j,k}$. In other words, 
\begin{equation}\label{equation:Ijk}
I_{j,k} = \{(1,f_{j,k}), (2,g_{j,k})\} \cup D(P_{j,k}) \cup D(Q_{j,k}).
\end{equation}
Also set
\begin{equation}\label{equation:Ujk}
U_{j,k} = \pi(I_{j,k}) \cap \mathbf{A} = P_{j,k} \cup \{f_{j,k},g_{j,k}\}.
\end{equation}
Thus $U_{j,k}$ is a $(d-j+1)$-subset of $\mathbf{A}$.
See Figure~\ref{figure:mid-section-interpolation} for a schematic diagram.
\begin{figure}[hbt]
\begin{tikzpicture}
\draw [decorate,decoration={brace,amplitude=4pt},xshift=0pt,yshift=0pt]
(1.25,1.3) -- (3.15,1.3) node [black,midway,yshift=0.4cm]
{\footnotesize $d-j-1$};
\draw [decorate,decoration={brace,amplitude=4pt},xshift=0pt,yshift=0pt]
(3.25,1.3) -- (6.15,1.3) node [black,midway,yshift=0.4cm]
{\footnotesize $j$};
\draw [decorate,decoration={brace,amplitude=4pt},xshift=-4pt,yshift=0pt]
(-.1,0) -- (-.1,1.2) node [black,midway,xshift=-0.6cm] 
{$I_{j,k}$};
\draw (.6,.6) rectangle (1.2,1.2);
\node at (.9,.9) {$f_{j,k}$};
\draw (0,0) rectangle (.6,.6);
\node at (.3,.3) {$g_{j,k}$};
\draw (1.2,1.2) rectangle (3.2,.6);
\node at (2.2,.9) {$P_{j,k}$};
\draw (1.2,.6) rectangle (3.2,0);
\node at (2.2,.3) {$P_{j,k}$};
\draw (3.2,1.2) rectangle (6.2,.6);
\node at (4.7,.9) {$Q_{j,k}$};
\draw (3.2,.6) rectangle (6.2,0);
\node at (4.7,.3) {$Q_{j,k}$};
\node at (7,.9) {$\in \W_{j,k}$};
\node at (7,.3) {$\in \W_{j,k+1}$};
\draw (0,-1) rectangle (3.2,-.4);
\node at (1.6,-.7) {$U_{j,k}$};
\draw (3.2,-1) rectangle (6.2,-.4);
\node at (4.7,-.7) {$Q_{j,k}$};
\draw [decorate,decoration={brace,mirror,amplitude=4pt},xshift=0pt,yshift=0pt]
(0.1,-1.1) -- (3.15,-1.1) node [black,midway,yshift=-0.4cm]
{\footnotesize $\subseteq {\bf A}$};
\draw [decorate,decoration={brace,mirror,amplitude=4pt},xshift=0pt,yshift=0pt]
(3.25,-1.1) -- (6.15,-1.1) node [black,midway,yshift=-0.4cm]
{\footnotesize $\subseteq {\bf B}$};\end{tikzpicture}

\caption{Schematic diagram of an interpolation set $I_{j,k}$ and related sets when $k < \delta_j$}
\label{figure:mid-section-interpolation}
\end{figure}

The remaining interpolation sets $I_{j,\delta_j}$ are defined as follows. Again choose a random $(d-j-1)$-subset $P_{j,\delta_j} \subseteq \mathbf{A}$ and a random $j$-subset $Q_{j,\delta_j} \subseteq \mathbf{B}$. By the sectioned-covering property for $\Wn$, there exists an element $f_{j,\delta_j} \in \mathbf{A}$ such that 
\[D(P_{j,\delta_j} \cup \{f_{j,\delta_j}\} \cup Q_{j,\delta_j}) \in \X_{j,\delta_j}.\]
Also, there exists an element $h_{j,\delta_j} \in \mathbf{B}$ such that 
\[D(P_{j,\delta_j} \cup Q_{j,\delta_j} \cup \{h_{j,\delta_j}  \}) \in \X_{j+1,1}.\]
Now define the sets 
\begin{equation}\label{equation:Fjdeltaj-Gjdeltaj}
F_{j,\delta_j} = P_{j,\delta_j} \cup \{f_{j,\delta_j}\} \quad\text{and}\quad H_{j,\delta_j} = Q_{j,\delta_j} \cup \{h_{j,\delta_j}\}.
\end{equation}
So $F_{j,\delta_j}$ again is a $(d-j)$-subset of $\mathbf{A}$, but this time $H_{j,\delta_j}$ is a $(j+1)$-subset of $\mathbf{B}$. 

Let $I_{j,\delta_j} \subseteq [2] \times (\mathbf{A} \cup \mathbf{B})$ be the $2d$-set whose first row is the $(d-j,j)$-set $F_{j,\delta_j} \cup Q_{j,\delta_j}$ and whose second row is the $(d-j-1,j+1)$-set $P_{j,\delta_j} \cup H_{j,\delta_j}$. In other words, 
\begin{equation}\label{equation:Ijdeltaj}
I_{j,\delta_j} = \{(1,f_{j,\delta_j}), (2,h_{j,\delta_j})\} \cup D(P_{j,\delta_j}) \cup D(Q_{j,\delta_j}).
\end{equation}
Again set
\begin{equation}\label{equation:Ujdeltaj}
U_{j,\delta_j} = \pi(I_{j,\delta_j}) \cap \mathbf{A} = P_{j,\delta_j} \cup \{f_{j,\delta_j}\}.
\end{equation}
See Figure~\ref{figure:end-section-interpolation} for a schematic diagram.
Note that $U_{j,\delta_j}$ (unlike $U_{j,k}$ for $k < \delta_j$) is a $(d-j)$-subset of $\mathbf{A}$.
\begin{figure}[hbt]
\begin{tikzpicture}
\draw [decorate,decoration={brace,amplitude=4pt},xshift=0pt,yshift=0pt]
(1.25,1.3) -- (3.15,1.3) node [black,midway,yshift=0.4cm]
{\footnotesize $d-j-1$};
\draw [decorate,decoration={brace,amplitude=4pt},xshift=0pt,yshift=0pt]
(3.25,1.3) -- (6.15,1.3) node [black,midway,yshift=0.4cm]
{\footnotesize $j$};
\draw [decorate,decoration={brace,amplitude=4pt},xshift=-4pt,yshift=0pt]
(.5,0) -- (.5,1.2) node [black,midway,xshift=-0.6cm] 
{$I_{j,\delta_j}$};
\draw (.6,.6) rectangle (1.2,1.2);
\node at (.9,.9) {$f_{j,\delta_j}$};
\draw (6.2,0) rectangle (6.8,.6);
\node at (6.5,.3) {$h_{j,\delta_j}$};
\draw (1.2,1.2) rectangle (3.2,.6);
\node at (2.2,.9) {$P_{j,\delta_j}$};
\draw (1.2,.6) rectangle (3.2,0);
\node at (2.2,.3) {$P_{j,\delta_j}$};
\draw (3.2,1.2) rectangle (6.2,.6);
\node at (4.7,.9) {$Q_{j,\delta_j}$};
\draw (3.2,.6) rectangle (6.2,0);
\node at (4.7,.3) {$Q_{j,\delta_j}$};
\node at (7.6,.9) {$\in \W_{j,\delta_j}$};
\node at (7.6,.3) {$\in \W_{j+1,1}$};
\draw (.6,-1) rectangle (3.2,-.4);
\node at (1.9,-.7) {$U_{j,\delta_j}$};
\draw (3.2,-1) rectangle (6.8,-.4);
\node at (5.0,-.7) {$H_{j,\delta_j}$};
\draw [decorate,decoration={brace,mirror,amplitude=4pt},xshift=0pt,yshift=0pt]
(.6,-1.1) -- (3.15,-1.1) node [black,midway,yshift=-0.4cm]
{\footnotesize $\subseteq {\bf A}$};
\draw [decorate,decoration={brace,mirror,amplitude=4pt},xshift=0pt,yshift=0pt]
(3.25,-1.1) -- (6.8,-1.1) node [black,midway,yshift=-0.4cm]
{\footnotesize $\subseteq {\bf B}$};
\end{tikzpicture}
\caption{Schematic diagram of an interpolation set $I_{j,\delta_j}$ and related sets}
\label{figure:end-section-interpolation}
\end{figure}

Define $\mathcal{U}$ to be the set of all the sets $U_{j,k}$. In other words,
\begin{equation}
\mathcal{U} = \{U_{j,k} \mid \overleftarrow{\jmath} \leq j \leq \overrightarrow{\jmath}, \, 1 \leq k < \delta_j \} \, \cup \, \{U_{j,\delta_j} : U_{j,k} \mid \overleftarrow{\jmath} \leq j < \overrightarrow{\jmath} \}
\end{equation}

Later we will refer to several conditions satisfied by the sets involved in constructing the interpolation sets, which we state now:
\begin{lemma}\label{lemma:2nd-interp-properties}
Let $\overleftarrow{\jmath} \leq j \leq \overrightarrow{\jmath}$ and $1 \leq k \leq \delta_j$.
\begin{enumerate}
\item\label{Pjk-to-Ujk} The set $U_{j,k}$ is obtained by adding either one or two elements of $\mathbf{A}$ to $P_{j,k}$.
\item\label{Pjk-sub-Ujk} For all $j$ and $k$, $P_{j,k} \subseteq U_{j,k} \subseteq \mathbf{A}$.
\item\label{Ujk-card} If $k < \delta_j$, then $U_{j,k}$ is a $(d-j+1)$-subset of $\mathbf{A}$. The set $U_{j,\delta_j}$ is a $(d-j)$-subset of $\mathbf{A}$.
\item\label{piIjk-is-Ujk} For all $(j,k)$, one has $\pi(I_{j,k}) \cap \mathbf{A} = U_{j,k}$.
\end{enumerate}
\end{lemma}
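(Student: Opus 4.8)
The plan is to verify the four assertions by unwinding the definitions of $I_{j,k}$ and $U_{j,k}$, treating the cases $k<\delta_j$ and $k=\delta_j$ separately. Beyond notational bookkeeping, the only structural inputs needed are: (i) the layers of $\Wn$ partition the underlying set $\A$, which holds because $\Wn$ is a layer family obtained by merging consecutive layers of the layer family $\Vn$; and (ii) every $d$-set occurring in a layer of section $j$ of $\Wn$ is a $(d-j,j)$-set, a property inherited from the $j$th mesh of $\Vn$ and preserved under merging. We also use repeatedly that $\mathbf{A}\cap\mathbf{B}=\emptyset$, that $P_{j,k}\subseteq\mathbf{A}$ and $f_{j,k},g_{j,k}\in\mathbf{A}$, and that $Q_{j,k}\subseteq\mathbf{B}$ and $h_{j,\delta_j}\in\mathbf{B}$.

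First I would treat $k<\delta_j$. Applying $\pi$ term by term to \eqref{equation:Ijk} gives $\pi(I_{j,k})=\{f_{j,k},g_{j,k}\}\cup P_{j,k}\cup Q_{j,k}$, and intersecting with $\mathbf{A}$ removes exactly $Q_{j,k}\subseteq\mathbf{B}$, leaving $\pi(I_{j,k})\cap\mathbf{A}=P_{j,k}\cup\{f_{j,k},g_{j,k}\}$, which is precisely \eqref{equation:Ujk}; this proves part~\ref{piIjk-is-Ujk}, and the explicit formula immediately gives parts~\ref{Pjk-to-Ujk} and~\ref{Pjk-sub-Ujk}. For part~\ref{Ujk-card} I would show the three sets $P_{j,k}$, $\{f_{j,k}\}$, $\{g_{j,k}\}$ are pairwise disjoint. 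Since $D(P_{j,k}\cup\{f_{j,k}\}\cup Q_{j,k})\in\X_{j,k}$, the set $P_{j,k}\cup\{f_{j,k}\}\cup Q_{j,k}$ lies in $\W_{j,k}$ and hence is a $(d-j,j)$-set, so its $\mathbf{A}$-part $P_{j,k}\cup\{f_{j,k}\}$ has $d-j$ elements, forcing $f_{j,k}\notin P_{j,k}$; the identical argument in layer $\W_{j,k+1}$ gives $g_{j,k}\notin P_{j,k}$. Finally $f_{j,k}\neq g_{j,k}$, since otherwise $P_{j,k}\cup\{f_{j,k}\}\cup Q_{j,k}$ would belong to both $\W_{j,k}$ and $\W_{j,k+1}$, contradicting disjointness of the layers. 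Hence $\left|U_{j,k}\right|=(d-j-1)+2=d-j+1$.

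For $k=\delta_j$ the argument is the same but shorter. Applying $\pi$ to \eqref{equation:Ijdeltaj} gives $\pi(I_{j,\delta_j})=\{f_{j,\delta_j},h_{j,\delta_j}\}\cup P_{j,\delta_j}\cup Q_{j,\delta_j}$; intersecting with $\mathbf{A}$ now removes both $h_{j,\delta_j}\in\mathbf{B}$ and $Q_{j,\delta_j}\subseteq\mathbf{B}$, leaving $P_{j,\delta_j}\cup\{f_{j,\delta_j}\}$, which is \eqref{equation:Ujdeltaj}; this gives part~\ref{piIjk-is-Ujk}, and the formula gives parts~\ref{Pjk-to-Ujk} (only one element is added this time) and~\ref{Pjk-sub-Ujk}. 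For part~\ref{Ujk-card}, the relation $D(P_{j,\delta_j}\cup\{f_{j,\delta_j}\}\cup Q_{j,\delta_j})\in\X_{j,\delta_j}$ forces $P_{j,\delta_j}\cup\{f_{j,\delta_j}\}\cup Q_{j,\delta_j}$ to be a $(d-j,j)$-set, so $f_{j,\delta_j}\notin P_{j,\delta_j}$ and $\left|U_{j,\delta_j}\right|=d-j$.

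There is no genuine obstacle here — the statement is purely a record of the definitions for later use. The only point requiring a moment's care is the strict inequality $f_{j,k}\neq g_{j,k}$ in the case $k<\delta_j$, which is needed for the exact count $d-j+1$ in part~\ref{Ujk-card}; this is the one place where one must invoke that distinct layers of $\Wn$ are disjoint rather than merely the ($m$-)sectioned-covering property.
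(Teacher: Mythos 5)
Your proof is correct and takes essentially the same route as the paper, namely unwinding the definitions of $I_{j,k}$ and $U_{j,k}$ case by case for $k<\delta_j$ and $k=\delta_j$. You add a worthwhile detail the paper leaves implicit --- the verification that $f_{j,k}$ and $g_{j,k}$ are distinct and lie outside $P_{j,k}$, via disjointness of the layers of $\Wn$, which is what makes the count in part (3) exactly $d-j+1$ rather than merely an upper bound --- but the substance of the argument is the same.
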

\begin{proof}$ $
\begin{enumerate}
\item If $k < \delta_j$, then from the definition of $U_{j,k}$ in~\eqref{equation:Ujk}, $U_{j,k}$ is obtained by adding two elements of $\mathbf{A}$ to $P_{j,k}$.
If $k = \delta_j$, then from the definition of $U_{j,\delta_j}$ in~\eqref{equation:Ujdeltaj}, $U_{j,\delta_j}$ is obtained by adding one element of $\mathbf{A}$ to $P_{j,k}$.
\item This follows from part~(\ref{Pjk-to-Ujk}).
\item This follows from the fact that $P_{j,k}$ is a $(d-j-1)$-subset of $\mathbf{A}$ and~\eqref{equation:Ujk} or~\eqref{equation:Ujdeltaj}.
\item This follows from the definition in equation~\eqref{equation:Ujk}.
\end{enumerate}
\end{proof}

%\begin{example}\label{example:18symbols}
%Suppose $n=18$ and $d=8$. Let $\mathbf{A}=\{a_1,a_2,\dots,a_9\}$ and $\mathbf{B}=\{b_1,b_2,\dots,b_9\}$. Consider $j=4$ and suppose that $\delta_4  = 6$. We will construct an interpolation set at the intermediate layer $(j,k)=(4,2)$ and an interpolation set at the last layer $(4,6)$ of section 4.

%Let $P_{4,2} = \{a_1,a_2,a_3\}$ and $Q_{4,2} = \{b_1,b_2,b_3,b_4\}$. Suppose we can choose $f_{2,4}=a_6$ and $g_{2,4} = a_8$. That is, we have 
%$$D(\{a_1,a_2,a_3,a_6,b_1,b_2,b_3,b_4\}) \in \X_{4,2} \quad\text{and}\quad D(\{a_1,a_2,a_3,a_8,b_1,b_2,b_3,b_4\}) \in \X_{4,2}.$$ 

%Then we define $\Y_{4,2} = \X_{4,2} \cup \{I_{4,2}\}$ where 
%$$I_{4,2} = \{(1,a_6), (2,a_8)\} \cup D(P_{4,2}) \cup D(Q_{4,2}).$$

%Let $P_{4,6} = \{a_2,a_5,a_7\}$ and $Q_{4,6} = \{b_3,b_5,b_6,b_7\}$. Suppose we can choose $f_{4,6}=a_9$ and $h_{4,6} = b_1$. That is, we have the doubled $(4,4)$-set 
%$$D(\{a_2,a_5,a_7,a_9,b_3,b_5,b_6,b_7\}) \in \X_{4,6}$$ 
%and the doubled $(3,5)$-set 
%$$D(\{a_2,a_5,a_7,b_1,b_3,b_5,b_6,b_7\}) \in \X_{5,1}.$$ 

%Then we define $\Y_{4,6} = \X_{4,6} \cup \{I_{4,6}\}$ where 
%$$I_{4,2} = \{(1,a_9), (2,b_1)\} \cup D(P_{4,6}) \cup D(Q_{4,6}).$$
%\end{example}

\begin{corollary} \label{corollary:Udiff}
For $(j,k) \neq (j',k')$, we have $\left| U_{j',k'} \setminus U_{j,k} \right| \geq 3$.
\end{corollary}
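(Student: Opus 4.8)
The plan is to reduce the statement to a lower bound on the size of $U_{j',k'} \setminus U_{j,k}$ coming from the near-disjointness of the randomly chosen sets $P_{j,k}$, exactly as in Corollary~\ref{corollary:Uintersection} but keeping careful track of the new bigraded setting. First I would invoke the appropriate separation lemma from the appendix: since the $P_{j,k}$ are chosen as random $(d-j-1)$-subsets of $\mathbf{A}$ (a set of size $n/2$) and the indices $(j,k)$ with $\overleftarrow{\jmath}\le j\le\overrightarrow{\jmath} = \lfloor 3d/4\rfloor$ and $1\le k\le \delta_j$ number only polynomially many, Proposition~\ref{proposition:P-intersection} (with $d$ and $n-d$ sufficiently large, as stipulated) guarantees that for $(j,k)\neq (j',k')$ the intersection $P_{j,k}\cap P_{j',k'}$ is small — say of size at most $|P_{j',k'}| - 6$, i.e.\ at least $6$ elements of $P_{j',k'}$ lie outside $P_{j,k}$. (I would state the precise slack needed once the inequality at the end is assembled; ``$\ge 6$'' is a safe target since we then remove at most two elements from each side.)

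Next I would pass from the $P$'s to the $U$'s using Lemma~\ref{lemma:2nd-interp-properties}. By part~(\ref{Pjk-sub-Ujk}), $P_{j',k'}\subseteq U_{j',k'}$, and by part~(\ref{Pjk-to-Ujk}), $U_{j,k}$ is obtained from $P_{j,k}$ by adjoining at most two elements of $\mathbf{A}$. Therefore
\[
U_{j',k'}\setminus U_{j,k} \;\supseteq\; P_{j',k'}\setminus U_{j,k} \;\supseteq\; \bigl(P_{j',k'}\setminus P_{j,k}\bigr)\setminus\{\text{the}\le 2\text{ extra elements of }U_{j,k}\},
\]
so $|U_{j',k'}\setminus U_{j,k}| \ge |P_{j',k'}\setminus P_{j,k}| - 2 \ge 6 - 2 = 4 \ge 3$, which is more than enough. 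This handles all pairs of distinct indices uniformly, whether or not $k=\delta_j$ or $k'=\delta_{j'}$, because the only facts used are the two structural properties in Lemma~\ref{lemma:2nd-interp-properties} together with the separation bound on the $P$'s; the asymmetry in the definitions of $U_{j,k}$ versus $U_{j,\delta_j}$ is irrelevant to this count.

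The main obstacle — such as it is — is bookkeeping rather than mathematics: I must make sure the separation lemma is applied with enough slack to survive the loss of two elements on the $U_{j,k}$ side and still leave at least $3$, and I must confirm that the total number of index pairs $(j,k)$ (at most $\overrightarrow{\jmath}\cdot \max_j \delta_j = O(n^2/\log n)$ many) is within the regime where Proposition~\ref{proposition:P-intersection} applies for $d$ large. A secondary point to verify is that the stipulation ``$d$ and $n-d$ sufficiently large'' made when the $P_{j,k}$ were introduced is indeed the same hypothesis under which the appendix lemma delivers the needed bound; since we only ever need a constant amount of slack ($6$ rather than the $4$-style bound of the previous section), this is harmless. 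I expect the write-up to be two or three sentences once the correct appendix proposition and its slack parameter are pinned down.
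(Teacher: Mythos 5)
Your chain of inequalities is exactly the paper's: go from $U_{j',k'}\setminus U_{j,k}$ down to $P_{j',k'}\setminus U_{j,k}$ via Lemma~\ref{lemma:2nd-interp-properties}(\ref{Pjk-sub-Ujk}), lose at most two elements passing from $U_{j,k}$ to $P_{j,k}$ via Lemma~\ref{lemma:2nd-interp-properties}(\ref{Pjk-to-Ujk}), and finish with a separation bound on the $P$-sets. The one substantive error is the citation: Proposition~\ref{proposition:P-intersection} is about the sets $P_1,\dots,P_{\delta-1}$ of Section~\ref{section:Eisenbrand1}, which are all $(d-1)$-subsets of $[n]$ of the same cardinality, and that proposition gives an \emph{intersection} bound $|P_j\cap P_{j'}|\le d-4$. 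The relevant result here is Lemma~\ref{lemma:Pjk-diff}, which pertains to the bigraded sets $P_{j,k}\subseteq\mathbf{A}$ of Section~\ref{section:Eisenbrand2} — whose sizes $d-j-1$ vary with $j$ — and which gives the set-\emph{difference} bound $|P_{j,k}\setminus P_{j',k'}|\ge 5$. This distinction is not cosmetic: because $|P_{j',k'}|$ can range over a wide interval, a small-intersection statement does not by itself yield a lower bound on $|P_{j',k'}\setminus P_{j,k}|$; the lemma is stated in difference form precisely to sidestep the unequal cardinalities. With that substitution (and the correct constant $5$ in place of your placeholder $6$, which still gives $5-2=3$), your proof coincides with the paper's.
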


\begin{proof}
$\left| U_{j',k'} \setminus U_{j,k} \right|  \geq  \left| P_{j',k'} \setminus U_{j,k} \right| \geq \left| P_{j',k'} \setminus P_{j,k} \right| - 2  \geq 5 - 2 = 3$,
where the first inequality comes from Lemma~\ref{lemma:2nd-interp-properties}(\ref{Pjk-sub-Ujk}) applied to $(j',k')$, the second equality comes from Lemma~\ref{lemma:2nd-interp-properties}(\ref{Pjk-to-Ujk}), and the third is Lemma~\ref{lemma:Pjk-diff}. 
\end{proof}

\begin{corollary} \label{corollary:I-intersection}
For $(j,k) \neq (j',k')$, we have $\width(I_{j',k'} \cap I_{j,k}) \leq d-2$. 
\end{corollary}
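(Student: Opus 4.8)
The plan is to bound the width of $I_{j',k'} \cap I_{j,k}$ by separately controlling the $\mathbf{A}$-part and the $\mathbf{B}$-part of the projection, and then combine the two estimates with the overall cardinality constraint coming from the fact that every interpolation set has width exactly $d+1$ (its $\mathbf{A}$-width is $|U_{j,k}|$ and its $\mathbf{B}$-width is $|Q_{j,k}|$, which sum to $d+1$). First I would record that $\pi(I_{j,k} \cap I_{j',k'}) \subseteq \pi(I_{j,k}) \cap \pi(I_{j',k'})$, so it suffices to bound the latter. Splitting along $\mathbf{A}$ and $\mathbf{B}$: the $\mathbf{A}$-part of $\pi(I_{j,k}) \cap \pi(I_{j',k'})$ is $U_{j,k} \cap U_{j',k'}$, and the $\mathbf{B}$-part is $Q_{j,k} \cap Q_{j',k'}$ (using equations~\eqref{equation:Ijk},~\eqref{equation:Ijdeltaj} and the definitions of the $U$'s).

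Next I would estimate each piece. For the $\mathbf{A}$-side, Corollary~\ref{corollary:Udiff} gives $|U_{j',k'} \setminus U_{j,k}| \geq 3$, hence $|U_{j,k} \cap U_{j',k'}| = |U_{j',k'}| - |U_{j',k'} \setminus U_{j,k}| \leq |U_{j',k'}| - 3$. For the $\mathbf{B}$-side, I would invoke the separation lemma for the random sets $Q_{j,k}$ (the analogue of Lemma~\ref{lemma:Pjk-diff}, stated in the appendix and guaranteed with high probability when $d$ is large): distinct $Q$'s differ in at least $5$ elements, so $|Q_{j,k} \cap Q_{j',k'}| \leq |Q_{j',k'}| - 5$ whenever $(j,k) \neq (j',k')$ — and note that when $j \neq j'$ the $Q$'s have different sizes anyway, but the difference estimate still applies. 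Then
\[
\width(I_{j,k} \cap I_{j',k'}) \leq |U_{j,k} \cap U_{j',k'}| + |Q_{j,k} \cap Q_{j',k'}| \leq \bigl(|U_{j',k'}| - 3\bigr) + \bigl(|Q_{j',k'}| - 5\bigr) = (d+1) - 8 = d - 7 \leq d-2,
\]
using $|U_{j',k'}| + |Q_{j',k'}| = \width(I_{j',k'}) = d+1$.

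The one subtlety to handle carefully is the case $j = j'$ but $k \neq k'$ versus $j \neq j'$: in the first case $Q_{j,k}$ and $Q_{j,k'}$ have the same size and the random-set separation lemma applies directly; in the second case the sizes differ, but since we only need an upper bound on the intersection, bounding $|Q_{j,k} \cap Q_{j',k'}| \leq \min(|Q_{j,k}|, |Q_{j',k'}|)$ together with the $\mathbf{A}$-side estimate is already more than enough to clear $d-2$ with room to spare. The main obstacle is simply making sure the appendix separation lemmas are stated with enough slack (differences of size at least $5$, not just positive) so that the crude sum $|U_{j,k}\cap U_{j',k'}| + |Q_{j,k}\cap Q_{j',k'}|$ beats $d-1$; everything else is bookkeeping with the identities~\eqref{equation:Ijk}--\eqref{equation:Ujdeltaj} and Lemma~\ref{lemma:2nd-interp-properties}. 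Since the bound $d-7$ we obtain is far below $d-2$, there is no delicacy in the final inequality, so I expect the proof to be short.
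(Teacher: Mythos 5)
Your proposal is correct, but it takes a slightly different route from the paper's proof by invoking more machinery than is needed. The paper's calculation also splits width into $\Awidth$ and $\Bwidth$, but it only applies a separation estimate on the $\mathbf{A}$-side: it takes the \emph{trivial} bound $\Bwidth(I_{j',k'} \cap I_{j,k}) \leq \Bwidth(I_{j',k'})$, and then uses Corollary~\ref{corollary:Udiff} alone to shave off $3$ from the $\mathbf{A}$-width, giving $\width(I_{j',k'}) - 3 = (d+1) - 3 = d-2$ directly. You additionally invoke the $Q$-separation (the $Q$-half of Lemma~\ref{lemma:Pjk-diff}) to shave another $5$ off the $\mathbf{B}$-side, reaching $d-7$; this is valid and produces extra slack, but it makes the argument depend on a second random-separation estimate it doesn't actually need, and it introduces a small bookkeeping imprecision: for $k = \delta_j$, equation~\eqref{equation:Ijdeltaj} shows that $\pi(I_{j,\delta_j}) \cap \mathbf{B} = Q_{j,\delta_j} \cup \{h_{j,\delta_j}\}$, not just $Q_{j,\delta_j}$, and correspondingly $\left|U_{j',k'}\right| + \left|Q_{j',k'}\right| = d$ rather than $d+1$ when $k' = \delta_{j'}$ — your slack absorbs these off-by-one issues so the conclusion survives, but the identities as you wrote them are not literally correct in the end-of-section case. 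The paper's version sidesteps both issues by never decomposing the $\mathbf{B}$-part at all; as a rule of thumb, when the trivial bound on one coordinate already closes the argument, reaching for a second probabilistic lemma only increases the surface area for errors.
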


\begin{proof}
We simply calculate
\begin{align*}
\width(I_{j',k'} \cap I_{j,k}) & =  \Awidth(I_{j',k'} \cap I_{j,k}) + \Bwidth(I_{j',k'} \cap I_{j,k}) \\
& \leq  \Awidth(I_{j',k'} \cap I_{j,k}) + \Bwidth(I_{j',k'}) \\
& =  \left| U_{j',k'} \cap U_{j,k} \right| + \Bwidth(I_{j',k'}) \\
& \leq  \left| U_{j,k} \right| - 3 + \Bwidth(I_{j',k'}) \\
& =  \Awidth(I_{j',k'}) - 3 + \Bwidth(I_{j',k'}) \\
& =  \width(I_{j',k'}) - 3 \\
& =  d-2.  
\end{align*}
where the second inequality follows from Corollary~\ref{corollary:Udiff}.
\end{proof}

%\begin{proposition}
%The layer family $\Yn$ has endpoint count.
%\end{proposition}
%\begin{proof}
%Let $S \subseteq [2] \times (\mathbf{A} \cup \mathbf{B})$ be a $(2d-1)$-subset. Thus $\width(S) \geq d$.

%If $\width(S) \geq d+2$, then no set in $\Zn$ contains $S$, since each correspondence set has width $d$ and each interpolation set has width $d+1$.

%If $\width(S) = d+1$, again no correspondence set is wide enough to contain $S$. If $I$ and $I'$ are distinct interpolation sets that both contain $S$, then $\pi(I) \cap \pi(I') \supseteq \pi(S)$, so 
%\[\width(I \cap I') \geq \width(S) = d+1,\]
%contradicting Corollary~\ref{corollary:I-intersection}, so at most one interpolation set in $\Zn$ contain $S$. 

%Suppose $\width(S) = d$. If $T$ is a correspondence set that contains $S$, then $\pi(T) \subseteq \pi(S)$. But both $\pi(T)$ and $\pi(S)$ have width $d$, so in fact $\pi(T) = \pi(S)$ and thus by the form of correspondence sets, $T = D(\pi(S))$. Thus no more than one correspondence set contains $S$. 
%If $I$ and $I'$ are distinct interpolation sets that both contain $S$, then by the same argument as in the previous statement, we see that 
%\[\width(I \cap I') \geq \width(S) = d,\]
%again contradicting Corollary~\ref{corollary:I-intersection}. So at most one interpolation set% contains $S$. 

%In all cases, $S$ is contained in at most one correspondence set and at most one interpolation% set in $\Yn$. Thus, $\Yn$ satisfies endpoint count.
%\end{proof}

\begin{proposition}\label{proposition:2ndlinkageinY}
The layer family $\Yn$ has linkage.
\end{proposition}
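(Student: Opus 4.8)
The plan is to verify the linkage condition directly, treating separately the two kinds of adjacent layers in $\Yn$: consecutive layers within a section, namely $\Y_{j,k}$ and $\Y_{j,k+1}$ with $\overleftarrow{\jmath} \le j \le \overrightarrow{\jmath}$ and $1 \le k \le \delta_j-1$; and consecutive layers straddling two sections, namely $\Y_{j,\delta_j}$ and $\Y_{j+1,1}$ with $\overleftarrow{\jmath} \le j \le \overrightarrow{\jmath}-1$. In each case I will exhibit one explicit $2d$-set in each of the two layers and check that the two sets share exactly $2d-1$ symbols of $[2]\times(\mathbf A\cup\mathbf B)$. Note that, unlike the later properties, linkage will not use the randomness of the $P_{j,k}$ and $Q_{j,k}$, nor Corollaries~\ref{corollary:Udiff} and~\ref{corollary:I-intersection}; it is built into the choice of the interpolation sets.

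For the within-section case the candidates are the interpolation set $I_{j,k}$, which lies in $\Y_{j,k}$ by construction, and the correspondence set $D(G_{j,k}\cup Q_{j,k})$, which lies in $\X_{j,k+1}\subseteq\Y_{j,k+1}$ by the defining choice $D(P_{j,k}\cup\{g_{j,k}\}\cup Q_{j,k})\in\X_{j,k+1}$. Using~\eqref{equation:Ijk} and the identity $D(G_{j,k}\cup Q_{j,k})=\{(1,g_{j,k}),(2,g_{j,k})\}\cup D(P_{j,k})\cup D(Q_{j,k})$, the intersection of the two sets is $\{(2,g_{j,k})\}\cup D(P_{j,k})\cup D(Q_{j,k})$, which has $1+2(d-j-1)+2j=2d-1$ elements. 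Hence $I_{j,k}$ and $D(G_{j,k}\cup Q_{j,k})$ witness linkage across this pair of layers.

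For the cross-section case the candidates are $I_{j,\delta_j}\in\Y_{j,\delta_j}$ and the correspondence set $D(P_{j,\delta_j}\cup H_{j,\delta_j})$, which lies in $\X_{j+1,1}\subseteq\Y_{j+1,1}$ by the defining choice $D(P_{j,\delta_j}\cup Q_{j,\delta_j}\cup\{h_{j,\delta_j}\})\in\X_{j+1,1}$. From~\eqref{equation:Ijdeltaj} and $D(P_{j,\delta_j}\cup H_{j,\delta_j})=\{(1,h_{j,\delta_j}),(2,h_{j,\delta_j})\}\cup D(P_{j,\delta_j})\cup D(Q_{j,\delta_j})$, the intersection is $\{(2,h_{j,\delta_j})\}\cup D(P_{j,\delta_j})\cup D(Q_{j,\delta_j})$, again of size $2d-1$. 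Since every pair of adjacent layers of $\Yn$ is of one of these two forms, $\Yn$ has linkage.

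I do not expect a genuine obstacle here: the only care required is the bookkeeping of which correspondence set sits in which layer of $\Xn$ (hence of $\Yn$) and checking that the overlaps come out to $2d-1$ rather than $2d$ or $2d-2$ — i.e., that the interpolation set differs from the chosen neighboring correspondence set in exactly one symbol. This is a direct unwinding of the definitions~\eqref{equation:Ijk} and~\eqref{equation:Ijdeltaj} together with the way $f_{j,k},g_{j,k},f_{j,\delta_j},h_{j,\delta_j}$ were selected.
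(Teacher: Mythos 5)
Your proof is correct and takes essentially the same approach as the paper: you exhibit exactly the witnessing pairs the paper names ($I_{j,k}$ with $D(G_{j,k}\cup Q_{j,k})$ within a section, and $I_{j,\delta_j}$ with $D(P_{j,\delta_j}\cup H_{j,\delta_j})$ across sections). The only difference is that you carry out the $2d-1$ intersection count explicitly, which the paper leaves implicit.
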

\begin{proof}
For each $j$ and for each $1 \leq k \leq \delta_{j-1}$, 
the sets $I_{j,k} \in \Y_{j,k}$ and $D(G_{j,k} \cup Q_{j,k}) \in \Y_{j,k+1}$ provide linkage between these layers in $\Yn$. 
For each $j$, the sets $I_{j,\delta_j} \in \Y_{j,\delta_j}$ and $D(P_{j,\delta_j} \cup H_{j,\delta_j}) \in \Y_{j+1,1}$ provide linkage between these layers in $\Yn$.
\end{proof}

\begin{theorem}
The layer family $\Yn$ has endpoint count and linkage. The diameter of $\Yn$ is in $\Omega(n^2/\log n)$.
\end{theorem}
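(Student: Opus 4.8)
The plan is to prove the three assertions of the theorem separately, reusing the structure of the analogous results for $\Zn$ in Section~\ref{section:Eisenbrand1}. Linkage is already done: it is exactly Proposition~\ref{proposition:2ndlinkageinY}, so nothing further is needed there. The diameter bound is also essentially immediate: adding interpolation sets does not delete any layers and does not merge any, so the underlying path of $\Yn$ has the same length as that of $\Xn$, which by the preceding theorem is in $\Omega(n^2/\log n)$ (it inherits the bound of $\Wn$ from Proposition~\ref{proposition:2nd-diameter-W} with $\epsilon = \tfrac14$ and $m$ fixed). Thus the substance of the theorem is the endpoint count property.

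For endpoint count, I would argue exactly as in Proposition~\ref{proposition:endpointcountinZ}: take any $S \subseteq [2] \times (\mathbf{A} \cup \mathbf{B})$ with $|S| = 2d-1$; then one of its two rows has size at least $d$, so $\width(S) \geq d$. I then want a width-classification lemma for $\Yn$ analogous to Lemma~\ref{lemma:widthclassificationinZ}: (i) if $\width(S) \geq d+2$, no set of $\Yn$ contains $S$, since correspondence sets have width exactly $d$ and each interpolation set $I_{j,k}$ has width $d+1$ (it has $\Awidth = |U_{j,k}|$ and $\Bwidth = |Q_{j,k}| = j$, summing to $d+1$ in the $k<\delta_j$ case, and $\Awidth = d-j$, $\Bwidth = j+1$, again $d+1$, in the $k = \delta_j$ case); (ii) if $\width(S) = d+1$, then no correspondence set contains $S$, and at most one interpolation set contains $S$ — for if $I_{j,k}$ and $I_{j',k'}$ both contained $S$ then applying $\pi$ and intersecting with $\mathbf{A}$ gives $U_{j,k} \cap U_{j',k'} \supseteq \pi(S) \cap \mathbf{A}$, while $\pi(S) \cap \mathbf{B} \subseteq Q_{j,k} \cap Q_{j',k'}$, and a short count using Corollary~\ref{corollary:Udiff} (together with $|Q_{j,k} \cap Q_{j',k'}| \le j$, or more simply the bound $\width(I_{j',k'} \cap I_{j,k}) \le d-2$ from Corollary~\ref{corollary:I-intersection}) contradicts $\width(S) = d+1$; (iii) if $\width(S) = d$, then at most one correspondence set contains $S$ (if a correspondence set $D(T)$ contains $S$ then $\pi(T) \supseteq \pi(S)$ and both have size $d$, forcing $T = \pi(S)$ so $D(T) = D(\pi(S))$ is unique) and at most one interpolation set contains $S$ (again by Corollary~\ref{corollary:I-intersection}, since two of them would force $\width(I_{j,k} \cap I_{j',k'}) \ge d$). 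Combining the three cases, any $(2d-1)$-set is contained in at most two sets of $\Yn$, which is endpoint count.

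The one place that needs a little care — and the main obstacle — is case (ii)/(iii) of the width classification, because unlike the situation in Section~\ref{section:Eisenbrand1}, here the interpolation sets come in two flavours ($k < \delta_j$ versus $k = \delta_j$) with different $\mathbf{A}$-width and $\mathbf{B}$-width profiles, so the intersection estimate has to be phrased in terms of total width rather than just the $\mathbf{A}$-coordinates. Fortunately Corollary~\ref{corollary:I-intersection} already packages exactly the estimate $\width(I_{j',k'} \cap I_{j,k}) \le d-2$ uniformly over all pairs and both flavours, so the argument reduces to: if two distinct interpolation sets both contain $S$, then $\width(S) \le \width(I_{j,k} \cap I_{j',k'}) \le d-2 < d$, contradicting $\width(S) \ge d$. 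This disposes of both remaining subcases at once. I would write the width-classification statement as a standalone lemma (mirroring Lemma~\ref{lemma:widthclassificationinZ}) and then derive the theorem in one short paragraph: linkage from Proposition~\ref{proposition:2ndlinkageinY}, the diameter from the preceding theorem on $\Xn$, and endpoint count from the lemma applied to a $(2d-1)$-set.
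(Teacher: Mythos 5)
Your proposal is correct and matches the paper's intent: the paper itself defers the endpoint-count argument, remarking that it is proved (identically) later for $\Zn$ in Lemma~\ref{lemma:2ndwidthclassificationinZ} and Proposition~\ref{proposition:2ndendpointcountinZ}, and the width case split you wrote out, hinging on Corollary~\ref{corollary:I-intersection}, is exactly that argument, which is unaffected by the later removal of resemblers and envelopers. Linkage is Proposition~\ref{proposition:2ndlinkageinY} and the diameter is inherited from $\Xn$, as you say.
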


\begin{proof} The only property we have not shown is endpoint count. Since we will later and independently prove endpoint count for the final family $\Zn$, we omit the proof here. 
\end{proof}

\subsection{Obtaining strong adjacency and dimension reduction}

As in Section~\ref{section:Eisenbrand1}, our final modification will be to remove certain correspondence sets in order to restore dimension reduction and adjacency, obtaining $\Zn$ from $\Yn$. Now we will need to remove two different types of correspondence sets. The first type is removed to restore dimension reduction within individual sections and is directly analogous to the sets $R_k^a$ in Section~\ref{section:Eisenbrand1}. The second type is removed to restore dimension reduction between different sections. 

First, for each pair $(j,k)$ with $k \leq \delta_{j}-1$ and for each $a \in P_{j,k}$, define  
\[R_{j,k}^a = D\left( \, (P_{j,k} \setminus \{a\}) \cup \{f_{j,k},g_{j,k}\} \cup Q_{j,k} \, \right).\]
See Figure~\ref{figure:Rjka}.
By completeness, $R_{j,k}^a \in \Y_{j,k'}$ for some $1 \leq k' \leq \delta_j$. 
\begin{figure}[hbt]
\begin{tikzpicture}
\draw [decorate,decoration={brace,amplitude=4pt},xshift=-4pt,yshift=0pt]
(-.1,0) -- (-.1,1.2) node [black,midway,xshift=-0.6cm] 
{$R_{j,k}^a$};
\draw (.6,.6) rectangle (1.2,1.2);
\node at (.9,.9) {$f_{j,k}$};
\draw (.6,0) rectangle (1.2,0.6);
\node at (.9,.3) {$f_{j,k}$};
\draw (0,0) rectangle (.6,.6);
\node at (.3,.3) {$g_{j,k}$};
\draw (0,.6) rectangle (.6,1.2);
\node at (.3,.9) {$g_{j,k}$};
\draw (1.2,1.2) rectangle (3.2,.6);
\node at (2,.9) {$P_{j,k}$};
\draw (1.2,.6) rectangle (3.2,0);
\node at (2,.3) {$P_{j,k}$};
\draw (3.2,1.2) rectangle (6.2,.6);

\draw[fill=lightgray, color=lightgray] (2.7,.7) rectangle (3.1,1.1);
\draw[fill=lightgray, color=lightgray] (2.7,.1) rectangle (3.1,0.5);
\node at (2.9,.3) {$a$};
\node at (2.9,.9) {$a$};

\node at (4.7,.9) {$Q_{j,k}$};
\draw (3.2,.6) rectangle (6.2,0);
\node at (4.7,.3) {$Q_{j,k}$};
\end{tikzpicture}
\caption{Schematic diagram of $R_{j,k}^a$, an $I_{j,k}$-resembler of type $a$ for $k < \delta_j$. The shaded element $a \in P_{j,k}$ is removed.}
\label{figure:Rjka}
\end{figure}

For each pair $(j,\delta_j)$, we remove \emph{two} families of correspondence sets that follow this pattern: one in section $j+1$ and one in section $j$. For each $a \in P_{j,\delta_j}$, define 
\[R_{j,\delta_j}^a = D\left( (P_{j,\delta_j} \setminus \{a\}) \cup \{f_{j,\delta_j},h_{j,\delta_j}\} \cup Q_{j,\delta_j} \right).\]
For each $b \in Q_{j,\delta_j}$, define 
\[R_{j,\delta_j}^b = D\left( P_{j,\delta_j} \cup \{f_{j,\delta_j},h_{j,\delta_j}\} \cup (Q_{j,\delta_j} \setminus \{b\}) \right).\]
See Figures~\ref{figure:Rjdeltaja} and~\ref{figure:Rjdeltajb}.
By completeness, $R_{j,\delta_j}^a \subseteq [2] \times (\mathbf{A} \cup \mathbf{B})$ is a correspondence set somewhere in section $j+1$ and $R_{j,\delta_j}^b$ is a correspondence set somewhere in section $j$. We call all of the sets $R_{j,k}^a$, $R_{j,\delta_j}^a$, and $R_{j,\delta_j}^b$ the \defn{$I_{j,k}$-resemblers} because they have $2d-1$ elements in common with $I_{j,k}$. We further call $R_{j,k}^a$ and $R_{j,\delta_j}^a$ the \defn{resemblers of type $a$} and call $R_{j,\delta_j}^b$ the \defn{resemblers of type $b$}. Note that the two correspondence sets that $I_{j,k}$ is designed to interpolate between are never $I_{j,k}$-resemblers.
\begin{figure}[hbt]
\begin{subfigure}[b]{0.45\textwidth}
\begin{tikzpicture}
\draw [decorate,decoration={brace,amplitude=4pt},xshift=-4pt,yshift=0pt]
(.5,0) -- (.5,1.2) node [black,midway,xshift=-0.6cm] 
{$R_{j,\delta_j}^a$};
\draw (.6,.6) rectangle (1.2,1.2);
\node at (.9,.9) {$f_{j,\delta_j}$};
\draw (.6,0) rectangle (1.2,.6);
\node at (.9,.3) {$f_{j,\delta_j}$};
\draw (6.2,0) rectangle (6.8,.6);
\node at (6.5,.3) {$h_{j,\delta_j}$};
\draw (6.2,.6) rectangle (6.8,1.2);
\node at (6.5,.9) {$h_{j,\delta_j}$};
\draw (1.2,1.2) rectangle (3.2,.6);
\node at (2.0,.9) {$P_{j,\delta_j}$};
\draw (1.2,.6) rectangle (3.2,0);
\node at (2.0,.3) {$P_{j,\delta_j}$};
\draw (3.2,1.2) rectangle (6.2,.6);
\node at (4.7,.9) {$Q_{j,\delta_j}$};
\draw (3.2,.6) rectangle (6.2,0);
\node at (4.7,.3) {$Q_{j,\delta_j}$};
\draw[fill=lightgray, color=lightgray] (2.7,.7) rectangle (3.1,1.1);
\draw[fill=lightgray, color=lightgray] (2.7,.1) rectangle (3.1,0.5);
\node at (2.9,.3) {$a$};
\node at (2.9,.9) {$a$};
\end{tikzpicture}
\caption{Diagram of $R_{j,\delta_j}^a$, an $I_{j,\delta_{j}}$-resembler of type $a$}
\label{figure:Rjdeltaja}
\end{subfigure}
\begin{subfigure}[b]{0.45\textwidth}
\begin{tikzpicture}
\draw [decorate,decoration={brace,amplitude=4pt},xshift=-4pt,yshift=0pt]
(.5,0) -- (.5,1.2) node [black,midway,xshift=-0.6cm] 
{$R_{j,\delta_j}^b$};
\draw (.6,.6) rectangle (1.2,1.2);
\node at (.9,.9) {$f_{j,\delta_j}$};
\draw (.6,0) rectangle (1.2,.6);
\node at (.9,.3) {$f_{j,\delta_j}$};
\draw (6.2,0) rectangle (6.8,.6);
\node at (6.5,.3) {$h_{j,\delta_j}$};
\draw (6.2,.6) rectangle (6.8,1.2);
\node at (6.5,.9) {$h_{j,\delta_j}$};
\draw (1.2,1.2) rectangle (3.2,.6);
\node at (2.2,.9) {$P_{j,\delta_j}$};
\draw (1.2,.6) rectangle (3.2,0);
\node at (2.2,.3) {$P_{j,\delta_j}$};
\draw (3.2,1.2) rectangle (6.2,.6);
\node at (4.7,.9) {$Q_{j,\delta_j}$};
\draw (3.2,.6) rectangle (6.2,0);
\node at (4.7,.3) {$Q_{j,\delta_j}$};
\draw[fill=lightgray, color=lightgray] (3.5,.7) rectangle (3.9,1.1);
\draw[fill=lightgray, color=lightgray] (3.5,.1) rectangle (3.9,0.5);
\node at (3.7,.3) {$b$};
\node at (3.7,.9) {$b$};
\end{tikzpicture}
\caption{Diagram of $R_{j,\delta_j}^b$, an $I_{j,\delta_{j}}$-resembler of type $b$}
\label{figure:Rjdeltajb}
\end{subfigure}
\caption{Schematic diagrams of $I_{j,\delta_{j}}$-resemblers. Shaded elements are removed.}
\label{figure:Rjdeltaj}
\end{figure}

Define
\begin{equation}
\begin{array}{lll}
\mathcal{R} &=
&\{R_{j,k}^a \mid \overleftarrow{\jmath} \leq j \leq \overrightarrow{\jmath}, 1 \leq k \leq \delta_j-1, a \in P_{j,k}\}\\
&\cup
&\{R_{j,\delta_j}^a \mid \overleftarrow{\jmath} \leq j \leq \overrightarrow{\jmath}, a \in P_{j,\delta_j} \}\\
&\cup
&\{R_{j,\delta_j}^b \mid \overleftarrow{\jmath} \leq j \leq \overrightarrow{\jmath}, b \in Q_{j,\delta_j} \}.
\end{array}
\end{equation}

The second type of set we will remove is as follows. For each interpolation set $I_{j,k}$ with $1 \leq k \leq \delta_j-1$, define an \defn{$I_{j,k}$-enveloper} to be any correspondence set $D(C)$ such that $C \supseteq U_{j,k}$. From Figure~\ref{figure:mid-section-interpolation}, note that if $D(C)$ is an $I_{j,k}$-enveloper, then $\Awidth(D(C)) \geq d-j+1$; that is, $D(C)$ can only appear in $\Y_{j',k'}$ if $j' < j$. We do not define $I_{j,\delta_j}$-envelopers.

\begin{proposition}\label{proposition:2nd-removed-set-properties}$ $
\begin{enumerate}
\item\label{resembler-P} If $D(C)$ is an $I_{j,k}$-resembler of type $a$, then $C$ contains every element of $P_{j,k}$ but one. If $D(C)$ is an $I_{j,k}$-resembler of type $b$, then $C$ contains every element of $P_{j,k}$.
\end{enumerate}\hfill\qedsymbol
\end{proposition}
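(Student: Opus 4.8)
The plan is to prove the proposition by a direct unwinding of the three defining formulas for the resemblers, since the statement is really just bookkeeping about which elements of $P_{j,k}$ survive. First I would recall that the \emph{resemblers of type $a$} are the sets $R_{j,k}^a$ (for $\overleftarrow{\jmath}\le j\le\overrightarrow{\jmath}$ and $1\le k\le\delta_j-1$) together with the sets $R_{j,\delta_j}^a$, whereas the \emph{resemblers of type $b$} are exactly the sets $R_{j,\delta_j}^b$. So it suffices to inspect the set $C$ with $D(C)$ equal to each of these three kinds of correspondence set.

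For a resembler $D(C)$ of type $a$ with $k<\delta_j$, the definition gives $C=(P_{j,k}\setminus\{a\})\cup\{f_{j,k},g_{j,k}\}\cup Q_{j,k}$, so immediately $C\supseteq P_{j,k}\setminus\{a\}$, i.e.\ $C$ contains every element of $P_{j,k}$ but one; the case $k=\delta_j$ is identical with $g_{j,k}$ replaced by $h_{j,\delta_j}$ and $P_{j,k}$ by $P_{j,\delta_j}$. If one wants the sharper reading that precisely the single element $a$ is absent from $C$, one notes that $a\in\mathbf{A}$ while $Q_{j,k}\subseteq\mathbf{B}$, so $a\notin Q_{j,k}$, and that $a\notin\{f_{j,k},g_{j,k}\}$ because $F_{j,k}=P_{j,k}\cup\{f_{j,k}\}$ and $G_{j,k}=P_{j,k}\cup\{g_{j,k}\}$ are genuine $(d-j)$-subsets of $\mathbf{A}$, whence $f_{j,k},g_{j,k}\notin P_{j,k}$ (and similarly $h_{j,\delta_j}\in\mathbf{B}$ in the end-of-section case). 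For a resembler $D(C)$ of type $b$, the definition gives $C=P_{j,\delta_j}\cup\{f_{j,\delta_j},h_{j,\delta_j}\}\cup(Q_{j,\delta_j}\setminus\{b\})$, which visibly contains all of $P_{j,\delta_j}$, since only an element $b\in Q_{j,\delta_j}\subseteq\mathbf{B}$ is removed.

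There is no genuine obstacle here: the proposition is a purely definitional observation whose role is to be quoted in the later arguments for endpoint count, adjacency, and dimension reduction of $\Zn$, where one needs to know how much of the $\mathbf{A}$-part $P_{j,k}$ of an interpolation set persists inside a removed correspondence set. The only point deserving a moment's care is the verification, in the type-$a$ case, that the removed element truly lies outside $C$, which follows from the cardinality count showing $f_{j,k},g_{j,k}\notin P_{j,k}$ together with the disjointness of $\mathbf{A}$ and $\mathbf{B}$.
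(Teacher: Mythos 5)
Your proof is correct and is essentially the argument the paper intends: the paper does not even spell out a proof (the statement is given with an immediate terminal square, treating it as a direct consequence of the definitions of $R_{j,k}^a$, $R_{j,\delta_j}^a$, and $R_{j,\delta_j}^b$), and your direct unwinding of those three formulas, together with the observation that $f_{j,k},g_{j,k},h_{j,\delta_j}\notin P_{j,k}$ and $\mathbf{A}\cap\mathbf{B}=\emptyset$, is exactly the calculation being suppressed.
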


We are now ready to define the final layer family $\Zn$ that will satisfy all of the desired properties. We define $\Zn$ by removing from $\Yn$ all $I_{j,k}$-resemblers and all $I_{j,k}$-envelopers for all values of $j$ and $k$.  

%\begin{example}
%Continuing Example~\ref{example:18symbols} above, the $I_{4,2}$-resemblers are the following correspondence sets in section 4 of $\Yn$:
%$$R_{4,2}^1 = D(\{a_2,a_3,a_6,a_8,b_1,b_2,b_3,b_4\})$$
%$$R_{4,2}^2 = D(\{a_1,a_3,a_6,a_8,b_1,b_2,b_3,b_4\})$$
%$$R_{4,2}^3 = D(\{a_1,a_2,a_6,a_8,b_1,b_2,b_3,b_4\}).$$ 

%The $I_{2,4}$-envelopers in sections 1, 2, and 3 of $\Yn$ are the sets $D(C)$ where $C = \{a_1,a_2,a_3,a_6,a_8\} \cup C'$ and $C'$ is any $3$-subset of $\mathbf{A} \cup \mathbf{B}$. For example, to construct $\Zn$, the correspondence set 
%$D(a_1,a_2,a_3,a_6,a_8,b_1,b_5,b_9)$ must be removed from section 3 and the correspondence set $D(a_1,a_2,a_3,a_5,a_6,a_8,b_2,b_5)$ must be removed from section 2. 

%The $I_{4,6}$-resemblers in section 4 of $\Yn$ are the following:
%$$D(\{a_2,a_5,a_7,a_9,b_1,b_5,b_6,b_7\}),$$
%$$D(\{a_2,a_5,a_7,a_9,b_1,b_3,b_6,b_7\}),$$
%$$D(\{a_2,a_5,a_7,a_9,b_1,b_3,b_5,b_7\}),$$
%$$D(\{a_2,a_5,a_7,a_9,b_1,b_3,b_5,b_6\}).$$

%The resemblers in section 5 of $\Yn$ are the following: 
%$$D(\{a_5,a_7,a_9,b_1,b_3, b_5,b_6,b_7\}),$$ 
%$$D(\{a_2,a_7,a_9,b_1,b_3, b_5,b_6,b_7\}),$$
%$$D(\{a_2,a_5,a_9,b_1,b_3, b_5,b_6,b_7\}).$$

%By definition, there are no $I_{4,6}$-envelopers.
%\end{example}

\begin{lemma} \label{lemma:avoidUs}
Let $C$ be a subset of $\mathbf{A}$ with $\left| C \right| \leq \frac{3d}{4}$. Then there are at most three sets of the form $U_{j,k}$ in $\mathcal{U}$ such that $\left| U_{j,k} \setminus C \right| \leq 1$.  
\end{lemma}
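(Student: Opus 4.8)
The plan is to argue by contradiction: suppose there are four distinct indices $(j_1,k_1),\dots,(j_4,k_4)$ with $\left| U_{j_i,k_i} \setminus C \right| \leq 1$ for each $i$. The key point is that Corollary~\ref{corollary:Udiff} forces the sets $U_{j,k}$ to be "spread out" relative to one another, while the constraint $\left| U_{j_i,k_i} \setminus C \right| \leq 1$ forces them all to be nearly contained in the single small set $C$. These two facts should be incompatible once we have four of them.

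First I would record the cardinality information: by Lemma~\ref{lemma:2nd-interp-properties}(\ref{Ujk-card}), each $U_{j,k}$ has size either $d-j+1$ or $d-j$, and since $\overleftarrow{\jmath} = \lceil d/4\rceil \leq j \leq \overrightarrow{\jmath} = \lfloor 3d/4\rfloor$, each $\left| U_{j,k} \right|$ lies in roughly the interval $[d/4, 3d/4]$. Next I would use $\left| U_{j_i,k_i} \setminus C \right| \leq 1$ to write $U_{j_i,k_i} = C_i \cup \{x_i\}$ where $C_i \subseteq C$ and $x_i$ is at most one extra element (possibly $x_i \in C$ too). Then for any two indices $i \neq i'$,
\[
\left| U_{j_i,k_i} \setminus U_{j_{i'},k_{i'}} \right| \leq \left| U_{j_i,k_i} \setminus C_{i'} \right| + \text{(correction)} \leq \left| C \setminus C_{i'} \right| + 1 \leq \left| C \setminus U_{j_{i'},k_{i'}} \right| + 2,
\]
so that combining with Corollary~\ref{corollary:Udiff} ($\left| U_{j_i,k_i} \setminus U_{j_{i'},k_{i'}} \right| \geq 3$) we get $\left| C \setminus U_{j_{i'},k_{i'}} \right| \geq 1$; more usefully, I would run the symmetric bookkeeping to conclude that the sets $U_{j_i,k_i} \cap C$ must be pairwise "almost distinct" inside $C$, and then count: four such sets, each of size at least $d/4 - 1$ and all living inside $C$ with $\left| C \right| \leq 3d/4$, while pairwise their symmetric differences (restricted to $C$) are bounded below by a constant coming from Corollary~\ref{corollary:Udiff}. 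Actually the cleaner route is an inclusion–exclusion / union bound: each $U_{j_i,k_i}$ misses at most one element of $\mathbf{A}$ outside $C$, so $U_{j_1}\cup\cdots\cup U_{j_4}$ is contained in $C$ together with at most $4$ extra points; but by Corollary~\ref{corollary:Udiff} the four sets are pairwise far apart, forcing their union to be substantially larger than any single one, hence $\left| C \right| \geq$ (size of one $U$) $+ 3\cdot 3 - O(1)$, and I'd pick the index with the largest $j$ (smallest $U$) versus the smallest $j$ to extract a contradiction with $\left| C \right| \leq 3d/4$ for $d$ large.

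The main obstacle I anticipate is getting the constants to actually close: the lower bound $\left| U_{j,k} \right| \geq d/4 - O(1)$ is only barely smaller than $3d/4$, so a naive "union is much bigger than one set" argument gains only a constant (at most $3$ per extra set from Corollary~\ref{corollary:Udiff}, which is an \emph{additive} constant, not a multiplicative factor), and that will not by itself beat $3d/4$. So the real content must be that among four sets $U_{j_i,k_i}$, at least two lie in \emph{different sections} or at least the $j_i$ cannot all be equal to $\overrightarrow{\jmath}$ — more precisely, I expect one needs to observe that for each fixed $j$ the sets $U_{j,1},\dots,U_{j,\delta_j}$ all contain the common structure forced by... no. Rethinking: the bound $\left| C \right| \leq 3d/4$ together with $\left| U_{j,k} \setminus C\right| \leq 1$ forces $\left| U_{j,k} \right| \leq 3d/4 + 1$, i.e. $d - j + 1 \leq 3d/4 + 1$, i.e. $j \geq d/4$, which is automatic — so that is not the constraint. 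The genuine pigeonhole must come from the $\mathbf{A}$-part being small: since each $U_{j,k} \subseteq \mathbf{A}$ and $\left|\mathbf{A}\right| = n/2 = 2d$, and $C \subseteq \mathbf{A}$ with $\left| C\right| \leq 3d/4$, four sets of size $\geq d/4$ each almost inside a set of size $3d/4$ must overlap heavily, so their pairwise intersections are large — but Corollary~\ref{corollary:Udiff} only bounds symmetric differences \emph{below}, which is consistent with large intersections. Hence the correct finish is the counting inequality
\[
\Bigl| \bigcup_{i=1}^{4} U_{j_i,k_i} \Bigr| \;\geq\; \left| U_{j_1,k_1}\right| + \sum_{i=2}^{4} \left| U_{j_i,k_i} \setminus (U_{j_1,k_1}\cup\cdots\cup U_{j_{i-1},k_{i-1}}) \right|,
\]
where I bound each later term below using Corollary~\ref{corollary:Udiff} applied to a suitably chosen earlier index — but again this gains only additive constants. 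I therefore expect the actual proof in the paper to be much more elementary than my first instinct: namely, each $U_{j,k}$ with $\left| U_{j,k}\setminus C\right|\le 1$ satisfies $U_{j,k}\setminus C = \{z\}$ or $\emptyset$ for a single $z$, so $U_{j,k}$ is determined up to its (at most one) element outside $C$ by the set $U_{j,k}\cap C$; and two \emph{distinct} $U$'s cannot have $U_{j,k}\cap C = U_{j',k'}\cap C$ with both $U$'s agreeing, because that would give $\left| U_{j,k}\setminus U_{j',k'}\right| \leq 1 < 3$, contradicting Corollary~\ref{corollary:Udiff}. So the map $(j,k)\mapsto (U_{j,k}\cap C,\ U_{j,k}\setminus C)$ is injective on the set of "bad" pairs, but that gives infinitely many possible values of the second coordinate, so this alone is not a bound either. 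Given the difficulty of guessing the exact mechanism, my plan is to set up the contradiction hypothesis of four bad pairs, extract from each the defining data $C_i = U_{j_i,k_i}\cap C$ and $x_i = U_{j_i,k_i}\setminus C$ (with $\left| x_i\right| \leq 1$), invoke Corollary~\ref{corollary:Udiff} to show $\left| C_i \triangle C_{i'}\right| \geq 1$ after accounting for the $x_i$'s, and then — this is the step I expect to be hardest and where I would need to look closely at the precise statement of Corollary~\ref{corollary:Udiff} and Lemma~\ref{lemma:Pjk-diff} — push the symmetric-difference lower bounds through a counting argument inside the ambient set $\mathbf{A}$, using $\left|\mathbf{A}\right| = 2d$ and $\left| C\right|\le 3d/4$ together with $\left| U_{j_i,k_i}\right|\ge d/4$, to force $\left| C\right|$ strictly larger than $3d/4$ and obtain the contradiction, whence at most three bad pairs exist.
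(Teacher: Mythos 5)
Your proposal correctly isolates the difficulty — the pairwise lower bound $\left|U_{j,k}\setminus U_{j',k'}\right|\geq 3$ from Corollary~\ref{corollary:Udiff} only adds a fixed constant per extra set, which cannot defeat a budget of size $3d/4$ — but it never supplies the ingredient that actually closes the gap, and you explicitly stop short of a finished argument. The missing piece is not pairwise at all: it is Lemma~\ref{lemma:Pjk-union}, the probabilistic estimate that the union of \emph{any four} of the sets $P_{j,k}$ has cardinality at least $\frac{13d}{16}$. That is a linear-in-$d$ lower bound on a fourfold union, not an additive constant, and it is proved separately (in the appendix) from concentration of hypergeometric random variables, not derivable from Corollary~\ref{corollary:Udiff}. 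With it the lemma is immediate: if four bad pairs existed, then each $U_i$ adds at most one element outside $C$, so $\left|U_1\cup\cdots\cup U_4\right|\leq \left|C\right|+4\leq \frac{3d}{4}+4$; on the other hand each $U_i\supseteq P_i$ by Lemma~\ref{lemma:2nd-interp-properties}, so $\left|U_1\cup\cdots\cup U_4\right|\geq\left|P_1\cup\cdots\cup P_4\right|\geq \frac{13d}{16}$, and $\frac{13d}{16}>\frac{3d}{4}+4$ for $d$ large. Your detour through symmetric differences, injectivity, and section indices does not lead anywhere toward this; the number ``three'' in the statement comes directly from ``four sets $P_{j,k}$ have large union,'' and without that lemma the claim is not provable by the bookkeeping you sketch.
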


\begin{proof}
Suppose there are four such sets $U_1, U_2, U_3, U_4$. Since each $U_i$ contains at most one element outside $C$, their union contains at most four elements outside $C$. For each $i=1,\dots,4$, the set $U_i$ is obtained by adding one or two elements to $P_i$ by Lemma~\ref{lemma:2nd-interp-properties}(\ref{Pjk-to-Ujk}), where $P_i$ is one of the sets $P_{j,k}$. Then by Lemma~\ref{lemma:Pjk-union}, 
\begin{align*}
\frac{13d}{16} & \leq  \left| P_1 \cup P_2 \cup P_3 \cup P_4 \right| \\ 
& \leq  \left| U_1 \cup U_2 \cup U_3 \cup U_4 \right|   \\
& \leq  \left| C \right| + 4 \\
& \leq  \frac{3d}{4} + 4,
\end{align*} 
which is a contradiction for $d$ large. 
\end{proof}

\begin{lemma}\label{lemma:2ndwidthclassificationinZ}
Let $S \subseteq [2] \times (\mathbf{A} \cup \mathbf{B})$. 
\begin{enumerate}
\item If $\width(S) \geq d+2$, then no set in $\Zn$ contains $S$. 
\item If $\width(S) = d+1$, then no correspondence set and at most one interpolation set in $\Zn$ contain $S$. 
\item If $\width(S) = d$, then at most one correspondence set and at most one interpolation set in $\Zn$ contain~$S$. 
\end{enumerate}
Thus, $S$ is contained in at most one correspondence set and at most one interpolation set in $\Zn$.
\end{lemma}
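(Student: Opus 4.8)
The plan is to mirror the proof of Lemma~\ref{lemma:widthclassificationinZ}, with Corollary~\ref{corollary:I-intersection} playing the role that Corollary~\ref{corollary:Uintersection} played there. The first ingredient is a width bookkeeping for the two kinds of $2d$-sets occurring in $\Zn$. Every correspondence set $D(C)$ has $\pi(D(C)) = C$, so $\width(D(C)) = |C| = d$. Every interpolation set has width exactly $d+1$; one checks the two shapes separately using Lemma~\ref{lemma:2nd-interp-properties}. When $k < \delta_j$, equation~\eqref{equation:Ijk} gives $\pi(I_{j,k}) = U_{j,k} \cup Q_{j,k}$, a disjoint union of the $(d-j+1)$-subset $U_{j,k} \subseteq \mathbf{A}$ (Lemma~\ref{lemma:2nd-interp-properties}(\ref{Ujk-card})) and the $j$-subset $Q_{j,k} \subseteq \mathbf{B}$, so $\width(I_{j,k}) = d+1$; when $k = \delta_j$, equation~\eqref{equation:Ijdeltaj} gives $\pi(I_{j,\delta_j}) = U_{j,\delta_j} \cup H_{j,\delta_j}$, the disjoint union of the $(d-j)$-subset $U_{j,\delta_j} \subseteq \mathbf{A}$ and the $(j+1)$-subset $H_{j,\delta_j} \subseteq \mathbf{B}$, again of size $d+1$. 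Finally, since $\Zn$ is obtained from $\Yn$ by deleting only correspondence sets (resemblers and envelopers), every interpolation set of $\Yn$ survives in $\Zn$, and the number of correspondence sets containing a given $S$ can only decrease, so it suffices to bound these counts in $\Yn$.

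With the widths in hand, part~(1) is immediate: no set in $\Zn$ has width $\geq d+2$, hence none contains an $S$ with $\width(S) \geq d+2$. For part~(2), a correspondence set has width $d < d+1 = \width(S)$, so none contains $S$; and if two distinct interpolation sets $I_{j,k} \neq I_{j',k'}$ both contained $S$, then $S \subseteq I_{j,k} \cap I_{j',k'}$ would force $d+1 = \width(S) \leq \width(I_{j,k} \cap I_{j',k'}) \leq d-2$ by Corollary~\ref{corollary:I-intersection}, a contradiction. For part~(3), the interpolation-set count is bounded by the identical argument, the contradiction now reading $d = \width(S) \leq d-2$. For the correspondence-set count, if $D(C) \supseteq S$ is a correspondence set then $\pi(S) \subseteq \pi(D(C)) = C$, and since $|\pi(S)| = \width(S) = d = |C|$ we conclude $C = \pi(S)$; thus $D(\pi(S))$ is the unique correspondence set that can contain $S$. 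The concluding sentence then follows by combining the three cases, reading the case $\width(S) \geq d+2$ as ``no correspondence set and no interpolation set.''

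I expect this to be essentially routine once Corollary~\ref{corollary:I-intersection} is in place. The only spots that require attention are the width computation for the end-of-section interpolation sets $I_{j,\delta_j}$, whose shape genuinely differs from that of $I_{j,k}$ for $k < \delta_j$ (an extra symbol in the $\mathbf{B}$-part rather than the $\mathbf{A}$-part), and the observation that passing from $\Yn$ to $\Zn$ removes only correspondence sets and hence cannot create new containments; neither of these is a real obstacle.
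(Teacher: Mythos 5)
Your proof is correct and follows essentially the same approach as the paper: the width bookkeeping (correspondence sets have width $d$, interpolation sets have width $d+1$ in both shapes) plus Corollary~\ref{corollary:I-intersection} to bound the number of interpolation sets, and the observation that a correspondence set of width $d$ containing a width-$d$ set $S$ must be $D(\pi(S))$. Your phrasing of the interpolation-set step via $S \subseteq I \cap I'$ (hence $\width(S) \le \width(I \cap I')$) is in fact a slightly cleaner bridge to Corollary~\ref{corollary:I-intersection} than the paper's $\pi(I) \cap \pi(I') \supseteq \pi(S)$, but the argument is the same in substance.
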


\begin{proof} $ $
\begin{enumerate}
\item Suppose $\width(S) \geq d+2$. Each correspondence set has width $d$ and each interpolation set has width $d+1$, so none contain $S$. 
\item Suppose $\width(S) = d+1$. Again no correspondence set is wide enough to contain $S$. If $I$ and $I'$ are distinct interpolation sets that both contain $S$, then $\pi(I) \cap \pi(I') \supseteq \pi(S)$, so 
\[\width(I \cap I') \geq \width(S) = d+1,\]
contradicting Corollary~\ref{corollary:I-intersection}.
\item Suppose $\width(S) = d$. If $T$ is a correspondence set that contains $S$, then $\pi(T) \supseteq \pi(S)$. But both $\pi(T)$ and $\pi(S)$ have width $d$, so in fact $\pi(T) = \pi(S)$ and thus by the form of correspondence sets, $T = D(\pi(S))$. Thus no more than one correspondence set contains $S$. 

If $I$ and $I'$ are distinct interpolation sets that both contain $S$, then by the same argument as in the previous statement, we see that 
\[\width(I \cap I') \geq \width(S) = d,\]
again contradicting Corollary~\ref{corollary:I-intersection}. So at most one interpolation set contains $S$.  
\end{enumerate}
Thus, $S$ is contained in at most one correspondence set and at most one interpolation set in $\Zn$.
\end{proof}

\begin{proposition} \label{proposition:2ndendpointcountinZ}
The layer family $\Zn$ has endpoint count. 
\end{proposition}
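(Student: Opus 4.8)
The plan is to repeat, almost verbatim, the argument used for the first construction in Proposition~\ref{proposition:endpointcountinZ}, now feeding in the width-classification result of Lemma~\ref{lemma:2ndwidthclassificationinZ}. The only genuinely combinatorial input is an observation about cardinalities: if $S \subseteq [2] \times (\mathbf{A} \cup \mathbf{B})$ has $\left| S \right| = 2d-1$, then the two rows of $S$ cannot both have size at most $d-1$, since that would force $\left| S \right| \leq 2(d-1) < 2d-1$. Hence one row has size at least $d$, and because $\tilde{\pi}$ is injective on a single row, $\pi(S)$ contains the projection of that row and so $\width(S) \geq d$.

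With $\width(S) \geq d$ established, I would simply invoke Lemma~\ref{lemma:2ndwidthclassificationinZ}. Its concluding clause states that for any $S$ — whether $\width(S) \geq d+2$, $\width(S) = d+1$, or $\width(S) = d$ — the set $S$ is contained in at most one correspondence set and at most one interpolation set of $\Zn$. Since every $2d$-set of $\Zn$ is either a correspondence set or an interpolation set (and these two families are disjoint, so there is no risk of double counting), it follows that at most two $2d$-sets of $\Zn$ contain $S$. This is precisely the endpoint count property for a family of dimension $2d$, so the proposition follows.

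There is essentially no obstacle remaining: all of the substantive work — controlling the overlaps of the sets $U_{j,k}$ via Corollary~\ref{corollary:I-intersection}, excluding correspondence sets that are too wide, and ruling out two distinct interpolation sets sharing a width-$d$ set — has already been absorbed into Lemma~\ref{lemma:2ndwidthclassificationinZ}. The one point I would be slightly careful to spell out is the elementary pigeonhole step forcing $\width(S) \geq d$, since it is the sole place the hypothesis $\left| S \right| = 2d-1$ enters; everything after that is a one-line appeal to the lemma.
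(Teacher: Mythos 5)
Your proof is correct and follows essentially the same path as the paper: establish $\width(S) \geq d$ by the pigeonhole observation on the two rows, then appeal to Lemma~\ref{lemma:2ndwidthclassificationinZ} to bound the number of containing $2d$-sets by two. The only cosmetic difference is that you cite the lemma's summary clause while the paper says to run through the three cases, but the content is identical.
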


\begin{proof}
Let $S \subseteq [2] \times ({\bf A} \cup {\bf B})$ be a set of size $2d-1$. Then one row of $S$ has cardinality at least $d$, so $\width(S) \geq d$. By considering each of the three cases of Lemma~\ref{lemma:2ndwidthclassificationinZ}, we see that at most two sets in $\Zn$ contain $S$, verifying endpoint count. 
\end{proof}

\begin{lemma} \label{lemma:2ndadjacentlayerZ}
Let $S$ be a subset of $[2] \times (\mathbf{A} \cup \mathbf{B})$ of width $d$. Then either $S$ is contained in at most one $2d$-set of $\Zn$, or it is contained in exactly two $2d$-sets of $\Zn$ that occur in the same or adjacent layers.  
\end{lemma}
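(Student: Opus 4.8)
The plan is to mimic the proof of Lemma~\ref{lemma:adjacentlayerZ}: reduce to a single correspondence set $T$ and a single interpolation set $I_{j,k}$ both containing $S$, observe that $T$ differs from $I_{j,k}$ (after projecting) by one ``missing symbol'' $x$, and then run a case analysis on where $x$ lies — now accounting for the two shapes of interpolation set ($I_{j,k}$ with $k<\delta_j$ versus $I_{j,\delta_j}$) and the two families of removed correspondence sets ($I_{j,k}$-resemblers and $I_{j,k}$-envelopers). Concretely: if $S$ is contained in at most one $2d$-set of $\Zn$ there is nothing to prove, so assume it lies in more than one. By Lemma~\ref{lemma:2ndwidthclassificationinZ} (the case $\width(S)=d$), $S$ is then contained in exactly one correspondence set $T$ and exactly one interpolation set $I_{j,k}$, both in $\Zn$ by hypothesis. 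Since $\width(T)=d=\width(S)$ and $\pi(S)\subseteq\pi(T)$, we get $\pi(T)=\pi(S)$, hence $T=D(\pi(S))$; applying $\pi$ to $S\subseteq I_{j,k}$ and using $\width(I_{j,k})=d+1$ gives $\pi(T)=\pi(I_{j,k})\setminus\{x\}$ for a unique symbol $x\in\pi(I_{j,k})$.

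The heart of the argument is the case analysis on $x$, using that $\pi(I_{j,k})\cap\mathbf{A}=U_{j,k}$ by Lemma~\ref{lemma:2nd-interp-properties}. When $k<\delta_j$ we have $\pi(I_{j,k})=U_{j,k}\cup Q_{j,k}$ with $U_{j,k}=P_{j,k}\cup\{f_{j,k},g_{j,k}\}\subseteq\mathbf{A}$ and $Q_{j,k}\subseteq\mathbf{B}$: if $x\in P_{j,k}$ then $T=R_{j,k}^{x}$, a resembler of type $a$; if $x\in Q_{j,k}$ then $\pi(T)\supseteq U_{j,k}$, so $T$ is an $I_{j,k}$-enveloper; either way $T\notin\Zn$, a contradiction. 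If $x=f_{j,k}$ then $\pi(T)=G_{j,k}\cup Q_{j,k}$, so $T=D(G_{j,k}\cup Q_{j,k})\in\Z_{j,k+1}$, a layer adjacent to $\Z_{j,k}\ni I_{j,k}$; if $x=g_{j,k}$ then $\pi(T)=F_{j,k}\cup Q_{j,k}$, so $T=D(F_{j,k}\cup Q_{j,k})\in\Z_{j,k}$, the same layer. When $k=\delta_j$ we have $\pi(I_{j,\delta_j})=U_{j,\delta_j}\cup H_{j,\delta_j}$ with $U_{j,\delta_j}=P_{j,\delta_j}\cup\{f_{j,\delta_j}\}$ and $H_{j,\delta_j}=Q_{j,\delta_j}\cup\{h_{j,\delta_j}\}$: $x\in P_{j,\delta_j}$ forces $T=R_{j,\delta_j}^{x}$ (type $a$) and $x\in Q_{j,\delta_j}$ forces $T=R_{j,\delta_j}^{x}$ (type $b$), both removed in $\Zn$; $x=f_{j,\delta_j}$ gives $T=D(P_{j,\delta_j}\cup H_{j,\delta_j})\in\Z_{j+1,1}$, adjacent to $\Z_{j,\delta_j}$; and $x=h_{j,\delta_j}$ gives $T=D(F_{j,\delta_j}\cup Q_{j,\delta_j})\in\Z_{j,\delta_j}$, the same layer. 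In every surviving case the two $2d$-sets containing $S$ lie in the same or adjacent layers, which is the claim.

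The main obstacle is organizational rather than conceptual. The genuinely new feature over Section~\ref{section:Eisenbrand1} is the subcase $x\in Q_{j,k}$ with $k<\delta_j$: here $x$ is removed from the $\mathbf{B}$-part of $\pi(I_{j,k})$, so $T$ is not a resembler but has $\mathbf{A}$-width $d-j+1$ and thus envelops $U_{j,k}$ — it is an \emph{enveloper}, which is precisely why envelopers were added to the collection of removed correspondence sets. I also need to be careful to read ``same or adjacent'' in the path structure of $\Zn$: within a section consecutive layers $\Z_{j,k}$ are adjacent, and the last layer $\Z_{j,\delta_j}$ of section $j$ is adjacent to the first layer $\Z_{j+1,1}$ of section $j+1$ (which exists since $I_{j,\delta_j}$ is only defined for $j<\overrightarrow{\jmath}$) — exactly what is needed in the $x=f_{j,k}$ and $x=f_{j,\delta_j}$ cases. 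Everything else is a routine unpacking of the definitions~\eqref{equation:Ijk}--\eqref{equation:Ujdeltaj} of the interpolation sets and the $R$-sets.
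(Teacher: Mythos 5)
Your proposal is correct and follows essentially the same approach as the paper: reduce via Lemma~\ref{lemma:2ndwidthclassificationinZ} (width-$d$ case) to one correspondence set and one interpolation set, identify the unique missing symbol $x=\pi(I_{j,k})\setminus\pi(S)$, and split into the eight subcases (four for $k<\delta_j$, four for $k=\delta_j$) exactly as the paper does in its Cases 1a--1d and 2a--2d. You have also correctly isolated the one conceptually new subcase, $x\in Q_{j,k}$ with $k<\delta_j$, where $T$ is an enveloper rather than a resembler.
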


\begin{proof}
By Lemma~\ref{lemma:2ndwidthclassificationinZ}, the subset $S$ is contained in at most one correspondence set $D(C)$ and at most one interpolation set $I$ of $\Zn$. If $S$ is not contained in any interpolation set, then we are done, since $S$ is contained in at most one correspondence set. So assume some interpolation set $I$ contains $S$. Every interpolation set has width $d+1$, so $\pi(I) = \pi(S) \cup \{c\}$ for some single element $c \in \mathbf{A} \cup \mathbf{B}$. Also, every correspondence set has width $d$, so if $S$ is contained in a correspondence set, then that set is $D(\pi(S))$. 

The interpolation set $I$ is either $I_{j,k} \in \Z_{j,k}$ for some $j$ and some $k \leq \delta_j-1$, or it is $I_{j,\delta_j}$ for some $j$.
\begin{itemize}
\item Case 1: Suppose $I = I_{j,k} \in \Z_{j,k}$ for some $j$ and some $k \leq \delta_j-1$. Then $\pi(I_{j,k}) = P_{j,k} \cup \{f_{j,k},g_{j,k}\} \cup Q_{j,k}$. Recall that $Q_{j,k} \subseteq \mathbf{B}$ and the rest of the elements of $\pi(I_{j,k})$ belong to $\mathbf{A}$.  

\begin{itemize}
\item
Case 1a: Suppose $c = a \in P_{j,k}$. 

Then $\pi(S) = \pi(R_{j,k}^a)$. Thus $D(\pi(S)) = R_{j,k}^a$ which is a set that is removed from $\Zn$, so no correspondence set in $\Zn$ contains~$S$.

\item
Case 1b: Suppose $c = b \in Q_{j,k}$. 

Then $\pi(S) \supseteq U_{j,k}$. So by the removal of $I_{j,k}$-envelopers, no correspondence set in $\Zn$ contains~$S$.

\item
Case 1c: Suppose $c = f_{j,k}$. 

Then $\pi(S) = G_{j,k} \cup Q_{j,k}$, so the correspondence set $D(\pi(S)) \in \Z_{j,k+1}$. 

\item
Case 1d: Suppose $c = g_{j,k}$. 

Then $\pi(S) = F_{j,k} \cup Q_{j,k}$, so the correspondence set $D(\pi(S)) \in \Z_{j,k}$.
\end{itemize}

\item
Case 2: Suppose $I = I_{j,\delta_j} \in \Z_{j,\delta_j}$ for some $j$. Then $\pi(I_{j,\delta_j}) = P_{j,\delta_j} \cup \{f_{j,\delta_j}\} \cup Q_{j,\delta_j} \cup \{h_{j,\delta_j}\}$. Recall that $P_{j,\delta_j} \cup \{f_{j,\delta_j}\} \subseteq \mathbf{A}$ and $Q_{j,\delta_j} \cup \{h_{j,\delta_j}\} \subseteq \mathbf{B}$.   

\begin{itemize}
\item
Case 2a: Suppose $c = a \in P_{j,\delta_j}$. 

Then just as in case 1a, we have $\pi(S) = \pi(R_{j,\delta_j}^a)$, so no correspondence set in $\Zn$ contains $S$. 

\item
Case 2b: Suppose $c = b \in  Q_{j,\delta_j}$.

Then $\pi(S) = \pi(R_{j,\delta_j}^b)$ which is again removed in the construction of $\Zn$. So no correspondence set in $\Zn$ contains $S$. 

\item
Case 2c: Suppose $c = f_{j,\delta_j}$.

Then $\pi(S) = P_{j,\delta_j} \cup Q_{j,\delta_j} \cup \{h_{j,\delta_j}\}$, so $D(\pi(S)) \in \Z_{j+1,1}$.

\item
Case 2d: Suppose $c = h_{j,\delta_j}$.

Then $\pi(S) = P_{j,\delta_j} \cup \{f_{j,\delta_j}\} \cup Q_{j,\delta_j}$, so $D(\pi(S)) \in \Z_{j,\delta_j}$. 
\end{itemize}

\end{itemize}
In all cases, if $S$ is contained in both an interpolation set and a correspondence set, then the correspondence set is in the same or adjacent layer.
\end{proof}

\begin{proposition} \label{proposition:2ndadjacencyinZ}
The layer family $\Zn$ has adjacency.
\end{proposition}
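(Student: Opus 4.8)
The plan is to mirror the proof of Proposition~\ref{proposition:adjacencyinZ} from Section~\ref{section:Eisenbrand1} almost verbatim, since the genuine work has already been packaged into the two preceding lemmas. Adjacency for a $2d$-dimensional layer family asks that whenever a $(2d-1)$-set $S \subseteq [2] \times (\mathbf{A} \cup \mathbf{B})$ is contained in two distinct $2d$-sets of $\Zn$, those two sets occur in the same or adjacent layers. So I would begin by fixing such an $S$ and noting that, because $\left| S \right| = 2d-1$, at least one of its two rows has at least $d$ elements; hence $\width(S) \geq d$.

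Next I would split on $\width(S)$ via Lemma~\ref{lemma:2ndwidthclassificationinZ}. If $\width(S) \geq d+2$, no set of $\Zn$ contains $S$ at all, so there is nothing to verify. If $\width(S) = d+1$, the lemma gives that no correspondence set contains $S$ and at most one interpolation set does, so $S$ lies in at most one $2d$-set of $\Zn$ and adjacency holds vacuously. The only substantive case is $\width(S) = d$, and there I would invoke Lemma~\ref{lemma:2ndadjacentlayerZ} directly: it states that such an $S$ is contained in at most one $2d$-set of $\Zn$, or in exactly two $2d$-sets that occur in the same or adjacent layers. In either situation the adjacency condition is met, which finishes the argument.

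The main obstacle is not in this proposition but in the lemma it rests on, Lemma~\ref{lemma:2ndadjacentlayerZ}: the real content is the eight-way case analysis on the single element $c$ with $\pi(I) = \pi(S) \cup \{c\}$ (namely $c \in P_{j,k}$, $c \in Q_{j,k}$, $c = f_{j,k}$, $c = g_{j,k}$, and the four analogous sub-cases for the end-of-section sets $I_{j,\delta_j}$), showing that in the two cases where a correspondence set can also contain $S$ that set is $D(\pi(S))$ and lies in the current or next layer, while in the remaining cases the would-be correspondence set $D(\pi(S))$ was precisely one of the resemblers or envelopers removed in passing from $\Yn$ to $\Zn$. Granting that lemma and Lemma~\ref{lemma:2ndwidthclassificationinZ}, the present proposition is a short case split with no new computation.
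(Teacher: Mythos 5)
Your proposal is correct and follows the paper's own argument essentially verbatim: both fix a $(2d-1)$-set $S$, observe $\width(S) \geq d$, dispatch the $\width(S) \geq d+1$ cases by Lemma~\ref{lemma:2ndwidthclassificationinZ}, and handle $\width(S) = d$ by invoking Lemma~\ref{lemma:2ndadjacentlayerZ}. Your closing remark that the substance lives in the case analysis inside Lemma~\ref{lemma:2ndadjacentlayerZ} also reflects how the paper has organized the work.
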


\begin{proof}
This follows in the same manner as in Section~\ref{section:Eisenbrand1}. Let $S$ be a $(2d-1)$-subset of $[2] \times ({\bf A} \cup {\bf B})$. Then $\width(S) \geq d$. If $\width(S) \geq d+1$, then the first or the second cases of Lemma~\ref{lemma:2ndwidthclassificationinZ} imply that $S$ is contained in at most one set in $\Zn$. Adjacency immediately follows. Finally, if $\width(S) = d$, then Lemma~\ref{lemma:2ndadjacentlayerZ} implies that $S$ is active on only one layer or on two adjacent layers of $\Zn$, so again adjacency follows.
\end{proof}

\begin{proposition} \label{proposition:2ndlinkageinZ}
The layer family $\Zn$ has strong adjacency. 
\end{proposition}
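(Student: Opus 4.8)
The plan is to reduce strong adjacency to linkage and then to check that the linkage witnesses already exhibited for $\Yn$ survive the passage to $\Zn$. Since $\Zn$ has adjacency by Proposition~\ref{proposition:2ndadjacencyinZ}, and, as observed in Section~\ref{section:definitions}, linkage together with adjacency is equivalent to strong adjacency for a layer family, it suffices to prove that $\Zn$ has linkage. From the proof of Proposition~\ref{proposition:2ndlinkageinY}, linkage in $\Yn$ is witnessed, for $k < \delta_j$, by the pair $I_{j,k} \in \Y_{j,k}$ and $D(G_{j,k} \cup Q_{j,k}) \in \Y_{j,k+1}$, and for each $j < \overrightarrow{\jmath}$ by the pair $I_{j,\delta_j} \in \Y_{j,\delta_j}$ and $D(P_{j,\delta_j} \cup H_{j,\delta_j}) \in \Y_{j+1,1}$. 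The family $\Zn$ is formed from $\Yn$ by deleting only correspondence sets (the $I_{j,k}$-resemblers and $I_{j,k}$-envelopers), and no interpolation set. Moreover, by construction $D(G_{j,k} \cup Q_{j,k}) \in \X_{j,k+1}$ and $D(P_{j,\delta_j} \cup H_{j,\delta_j}) \in \X_{j+1,1}$, and their intersections with the respective interpolation sets have exactly $2d-1$ elements; so it suffices to show that none of these correspondence sets is a resembler or an enveloper, and linkage in $\Zn$ follows.

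The key device is a uniform constraint on the $\mathbf{A}$-part of any deleted correspondence set. Reading off the definitions together with~\eqref{equation:Ujk} and~\eqref{equation:Ujdeltaj}: if $D(C')$ is an $I_{j',k'}$-enveloper then $C' \cap \mathbf{A} \supseteq U_{j',k'}$; if $D(C')$ is a resembler of type $a$ for $(j',k')$ then $C' \cap \mathbf{A} = U_{j',k'} \setminus \{a\}$ with $a \in P_{j',k'}$; and if $D(C')$ is a resembler of type $b$ then $C' \cap \mathbf{A} = U_{j',\delta_{j'}}$. Hence in every case $\left| U_{j',k'} \setminus (C' \cap \mathbf{A}) \right| \leq 1$. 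On the other hand, each linkage witness attached to the interpolation set $I_{j,k}$ has its $\mathbf{A}$-part contained in $U_{j,k}$: the $\mathbf{A}$-part of $G_{j,k} \cup Q_{j,k}$ is $G_{j,k} = P_{j,k} \cup \{g_{j,k}\} \subseteq U_{j,k}$ by~\eqref{equation:Ujk}, and the $\mathbf{A}$-part of $P_{j,\delta_j} \cup H_{j,\delta_j}$ is $P_{j,\delta_j} \subseteq U_{j,\delta_j}$ by~\eqref{equation:Ujdeltaj}.

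Next I would argue by contradiction. Suppose a linkage witness $D(C)$ attached to $I_{j,k}$ coincides with a deleted set associated with parameters $(j',k')$. Combining the two facts above, $U_{j',k'} \setminus U_{j,k} \subseteq U_{j',k'} \setminus (C \cap \mathbf{A})$, so $\left| U_{j',k'} \setminus U_{j,k} \right| \leq 1 < 3$, and Corollary~\ref{corollary:Udiff} forces $(j',k') = (j,k)$. It remains to rule out this diagonal coincidence by comparing $\mathbf{A}$-parts directly. For the witness $D(G_{j,k} \cup Q_{j,k})$, where $k < \delta_j$: its $\mathbf{A}$-part $G_{j,k}$ is a proper subset of $U_{j,k}$ omitting $f_{j,k}$, so it is not an $I_{j,k}$-enveloper; it still contains the element $a \in P_{j,k}$ that $R_{j,k}^a$ omits, so it is not the type-$a$ resembler for $(j,k)$; and the section-end resemblers $R_{j,\delta_j}^a$, $R_{j,\delta_j}^b$ carry second index $\delta_j \neq k$. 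For the witness $D(P_{j,\delta_j} \cup H_{j,\delta_j})$: no $I_{j,\delta_j}$-enveloper is defined; the type-$a$ resembler $R_{j,\delta_j}^a$ has $\mathbf{A}$-part $U_{j,\delta_j} \setminus \{a\} = (P_{j,\delta_j} \setminus \{a\}) \cup \{f_{j,\delta_j}\}$, which differs from $P_{j,\delta_j}$ since $f_{j,\delta_j} \notin P_{j,\delta_j}$; and the type-$b$ resembler $R_{j,\delta_j}^b$ has $\mathbf{A}$-part $U_{j,\delta_j}$, which is strictly larger than $P_{j,\delta_j}$. Every case yields a contradiction, so the witnesses survive into $\Zn$, giving linkage, and hence strong adjacency.

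The obstacle I anticipate is essentially organizational rather than conceptual: the second construction carries three flavors of resembler — the $R_{j,k}^a$ internal to a section and the two families $R_{j,\delta_j}^a$, $R_{j,\delta_j}^b$ at section ends — plus the envelopers, and the interpolation sets themselves split into the $k < \delta_j$ and $k = \delta_j$ cases, so a brute-force case analysis would be a sizeable matrix. The observation that the $\mathbf{A}$-part of every deleted set lies within one element of some $U_{j',k'}$ is what collapses this matrix and lets Corollary~\ref{corollary:Udiff} do all the quantitative work at once; the handful of residual diagonal cases are then settled by the elementary fact that $f_{j,k}$ and $f_{j,\delta_j}$ lie outside the relevant $P$-sets.
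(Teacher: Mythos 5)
Your proof is correct and reaches the same underlying conclusion, but it organizes the argument differently from the paper. The paper's proof, after reducing to showing the linkage witnesses from Proposition~\ref{proposition:2ndlinkageinY} survive the removal, splits squarely into two independent arguments: for envelopers it bounds $\left| P_{j,k} \setminus P_{j',k'} \right|$ directly and appeals to Lemma~\ref{lemma:Pjk-diff}, and for resemblers it passes through the inclusion--exclusion estimate $\left| I_{j,k} \cap I_{j',k'} \right| \geq 2d-2$ and invokes Corollary~\ref{corollary:I-intersection}. You instead notice a single unifying fact, that every removed correspondence set $D(C')$ associated with index $(j',k')$ satisfies $\left| U_{j',k'} \setminus (C' \cap \mathbf{A}) \right| \leq 1$ (zero for envelopers and type-$b$ resemblers, one for type-$a$ resemblers), while every linkage witness attached to $I_{j,k}$ has $\mathbf{A}$-part inside $U_{j,k}$. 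This lets Corollary~\ref{corollary:Udiff} do all the off-diagonal work in one shot, leaving only the diagonal case $(j',k') = (j,k)$, which you settle by direct inspection of $\mathbf{A}$-parts using $f_{j,k} \notin P_{j,k}$. What your route buys is a single quantitative appeal instead of two, and a more explicit treatment of the diagonal case, which the paper leaves implicit (the paper only asserts the tautness for $(j',k') \neq (j,k)$ and presumably regards the diagonal as obvious or covered by its remark that the interpolated pair of correspondence sets are never $I_{j,k}$-resemblers). The two proofs are of comparable length, rest on the same separation lemmas in the appendix, and are both valid; yours is arguably tidier in that the case structure collapses to one inequality plus a small residual check.
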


\begin{proof}
To show strong adjacency in $\Zn$, it suffices to show that none of these sets which were created linkage in $\Yn$ in the proof of Proposition~\ref{proposition:2ndlinkageinY} are removed in the construction of $\Zn$. By definition, no interpolation set is removed, so we only need to consider the correspondence sets $D(G_{j,k} \cup Q_{j,k}) \in \Y_{j,k+1}$ and $D(P_{j,\delta_j} \cup H_{j,\delta_j}) \in \Y_{j+1,1}$ and show that these remain in the respective layers $\Z_{j,k+1}$ and $\Z_{j+1,1}$.

Let $L$ be one of these linking correspondence sets. 
Then $\pi(L) \cap \mathbf{A}$ is either $G_{j,k} \cup \{f_{j,k}\}$ in the first case or $P_{j,\delta_j}$ in the second case.
Thus $\pi(L)$ contains $P_{j,k}$ and either zero or one additional elements of $\mathbf{A}$.
Suppose, for a contradiction, that for $(j',k') \not= (j,k)$ the set $L$ is an $I_{j',k'}$-enveloper. Then $U_{j',k'} \subseteq \pi(L)$, and thus $U_{j',k'} \subseteq \pi(L) \cap \mathbf{A}$.
\begin{itemize}
\item Case 1: Suppose $L = D(G_{j,k} \cup Q_{j,k})$.

Then $U_{j',k'} \subseteq \pi(L) \cap \mathbf{A} = P_{j,k} \cup \{f_{j,k}, g_{j,k}\}$ and by Lemma~\ref{lemma:2nd-interp-properties}(\ref{Pjk-to-Ujk}), $U_{j',k'}$ is $P_{j',k'} \cup \{f_{j',k'}, g_{j',k'}\}$ since envelopers are not defined for $I_{j,\delta_j}$, and thus
\[ P_{j',k'} \cup \{f_{j',k'}, g_{j',k'}\} \subseteq P_{j,k} \cup \{f_{j,k}, g_{j,k}\},\]
which implies that
\[\left| P_{j,k} \setminus P_{j',k'} \right| \leq 4,\]
contradicting Lemma~\ref{lemma:Pjk-diff}. 

\item Case 2: Suppose $L = D(P_{j,\delta_j} \cup H_{j,\delta_j})$.

Then $U_{j',k'} \subseteq \pi(L) \cap \mathbf{A} = P_{j,\delta_j} \cup \{f_{j,\delta_j}\}$ 
and by Lemma~\ref{lemma:2nd-interp-properties}(\ref{Pjk-to-Ujk}), $U_{j',k'}$ is $P_{j',k'} \cup \{f_{j',k'}, g_{j',k'}\}$ since envelopers are not defined for $I_{j,\delta_j}$, and thus
\[ P_{j',k'} \cup \{f_{j',k'}, g_{j',k'}\} \subseteq P_{j,\delta_j} \cup \{f_{j,\delta_j}\},\]
which implies that
\[\left| P_{j,k} \setminus P_{j',k'} \right| \leq 3,\]
contradicting Lemma~\ref{lemma:Pjk-diff}. 
\end{itemize}

We now show that $L$ cannot be a $I_{j',k'}$-resembler for $(j',k')\not= (j,k)$. If it were, then we would have 
$$\left| L \cap I_{j,k} \right| = 2d-1 = \left| L \cap I_{j',k'} \right|$$  
which would imply $\left| I_{j,k} \cap I_{j',k'} \right| \geq 2d-2$ and so $\width(I_{j,k} \cap I_{j',k'}) \geq d-1$, contradicting Corollary~\ref{corollary:I-intersection}. 
\end{proof}

It only remains to show dimension reduction for the family $\Zn$ in Proposition~\ref{prop:2nddimensionreduction}. Since the family $\Zn$ does not satisfy any simple variant of the covering property, we first need to prove several technical statements which will replace covering in the final proof.

\begin{lemma} \label{lemma:extendtodminus1}
Let $C$ be a subset of  $\mathbf{A} \cup \mathbf{B}$ such that $\left| C \right| \leq d-1$ and such that $C$ does not contain any of the sets $U_{j, k} \in \mathcal{U}$.
Fix $j' \in \{\overleftarrow{\jmath},\dots,\overrightarrow{\jmath}-1\}$, where
$\overleftarrow{\jmath}=\lceil \epsilon d\rceil$, $\overrightarrow{\jmath}=\lfloor (1-\epsilon)d\rfloor$, and $\epsilon=\frac14$.
If $\left| C^A \right| \leq d-j'$ and $\left| C^B \right| \leq j'$,
then there exists a $(d-1)$-set $\widetilde{C} \subseteq \mathbf{A} \cup \mathbf{B}$ such that:
\begin{enumerate} 
\item $C \subseteq \widetilde{C}$, 
\item $\left| \widetilde{C}^A \right| \leq d-j'$ and $\left| \widetilde{C}^B \right| \leq j'$, and 
\item $\widetilde{C}$ does not contain any of the sets $U_{j, k} \in \mathcal{U}$.
\end{enumerate}
\end{lemma}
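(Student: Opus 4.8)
The plan is to obtain $\widetilde C$ from $C$ in two stages: first decide how many new symbols to put into $\mathbf A$ and how many into $\mathbf B$, and then carry out the extension. The only genuine constraint is condition~(3), and since every $U_{j,k}\in\mathcal U$ is a subset of $\mathbf A$ (Lemma~\ref{lemma:2nd-interp-properties}(\ref{Pjk-sub-Ujk})), it restricts only the $\mathbf A$-part of $\widetilde C$; the $\mathbf B$-part may be completed arbitrarily.

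For the sizes, write $a=|C^A|$ and $b=|C^B|$, so $a+b=|C|\le d-1$. I would first check that the interval $[\max(a,\,d-1-j'),\ \min(d-j',\,d-1-b)]$ is nonempty: $a\le d-j'$ and $a\le d-1-b$ are exactly the hypotheses $|C^A|\le d-j'$ and $|C|\le d-1$, while $d-1-j'\le d-j'$ is trivial and $d-1-j'\le d-1-b$ is the hypothesis $|C^B|\le j'$. Pick an integer $\alpha$ in this interval and aim for $|\widetilde C^A|=\alpha$ and $|\widetilde C^B|=d-1-\alpha$; then automatically $a\le\alpha\le d-j'$ and $b\le d-1-\alpha\le j'$, which will give conditions~(1) and~(2). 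Since $|\mathbf B|=n/2=2d$ dwarfs $j'$, a superset $\widetilde C^B\supseteq C^B$ of size $d-1-\alpha$ inside $\mathbf B$ certainly exists, with no constraint from~(3).

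The crux is to produce $\widetilde C^A\supseteq C^A$ of size $\alpha$ inside $\mathbf A$ containing none of the $U_{j,k}$. I would build it up one symbol at a time, maintaining the invariant that the current set $E$ contains no $U_{j,k}$; this holds at the start ($E=C^A$) by hypothesis. At each stage $|E|\le\alpha\le d-j'\le d-\lceil d/4\rceil\le \tfrac{3d}{4}$ (this is where the lower bound $j'\ge\overleftarrow{\jmath}$, i.e.\ $\epsilon=\tfrac14$, is used), so Lemma~\ref{lemma:avoidUs} applies to $E$ and produces at most three sets $U_{j,k}$ with $|U_{j,k}\setminus E|\le 1$; as $E$ contains none of them, each such set contributes exactly one ``forbidden'' symbol lying in $\mathbf A\setminus E$. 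Because $|\mathbf A\setminus E|\ge 2d-\tfrac{3d}{4}>3$ for $d$ large, I can choose $x\in\mathbf A\setminus E$ that is not forbidden and replace $E$ by $E\cup\{x\}$. Adding $x$ completes no $U_{j,k}$: a set with $|U_{j,k}\setminus E|\ge2$ still omits an element, and a set with $|U_{j,k}\setminus E|=1$ has its missing element different from $x$ by the choice of $x$; so the invariant survives. Iterating until $|E|=\alpha$ yields $\widetilde C^A$, and $\widetilde C=\widetilde C^A\cup\widetilde C^B$ satisfies~(1)--(3).

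The main obstacle is justifying the greedy step: at every stage only a bounded number (namely three) of the $U_{j,k}$ are ``one symbol short'' of being covered — this is exactly the content of Lemma~\ref{lemma:avoidUs} — and one must keep the partially built $\mathbf A$-part within that lemma's size hypothesis $\tfrac{3d}{4}$, which is what forces $j'\ge\lceil d/4\rceil$ in the allowed range of $j'$. The cardinality bookkeeping in the first two paragraphs is routine.
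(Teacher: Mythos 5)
Your proposal is correct and follows essentially the same route as the paper's proof: observe that every $U_{j,k}$ lies in $\mathbf A$ so only the $\mathbf A$-part is constrained, greedily extend $C^A$ one symbol at a time using Lemma~\ref{lemma:avoidUs} (keeping the set below the $\tfrac{3d}{4}$ size threshold imposed by $j' \geq \lceil d/4 \rceil$) to avoid ever containing a $U_{j,k}$, and pad the $\mathbf B$-part freely. The only cosmetic difference is that you parametrize the target $\mathbf A$-size by a free choice of $\alpha$ in a computed interval, while the paper fixes it via a two-case split on whether $|C^A| = d-j'$ or $|C^A| < d-j'$; the underlying argument is the same.
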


\begin{proof}
Either $\left| C^A \right|$ is equal to $d-j'$ or strictly less than it.
\begin{itemize}
\item
Case 1: Suppose $\left| C^A \right| = d-j'$.

Then we must have $\left| C^B \right| \leq j'-1$. Add arbitrary elements of $\mathbf{B}$ to create a $(d-j',j'-1)$-set $\widetilde{C}$. By Lemma~\ref{lemma:2nd-interp-properties}(\ref{Pjk-sub-Ujk}), each $U_{j,k} \in \mathcal{U}$ consists entirely of elements of ${\bf A}$, thus the last condition will never be violated.  

\item
Case 2: Suppose $\left| C^A \right| < d-j'$.

Due to~\eqref{equation:epsilon14}, $\epsilon=\frac14$ thus $\left| C^A \right| < \frac{3d}{4}$. By hypothesis, $C^A$ does not contain any $U_{j,k} \in \mathcal{U}$. On the other hand, by Lemma~\ref{lemma:avoidUs}, there are most three sets $U_{j,k} \in \mathcal{U}$ such that $\left| U_{j,k} \setminus C^A \right| = 1$. Thus there are at most three elements of $\mathbf{A}$ which, when added to $C^A$, would cause it to contain a set $U_{j,k}$. If $\left| C^A \right| < d-j'-1$, add any other element of $\mathbf{A}$. Repeat until we have exactly $d-j'-1$ elements of $\mathbf{A}$, which we can do due to~\eqref{equation:epsilon14}. Then (as in Case 1) add arbitrary elements of $\mathbf{B}$ until we have exactly $j'$ elements of $B$. Let $\widetilde{C}$ be the resulting $(d-j'-1,j')$-set.
\end{itemize}
\end{proof}

\begin{proposition}\label{proposition:2ndcovering}
Let $C$ be a subset of $\mathbf{A} \cup \mathbf{B}$ such that $\left| C \right| \leq d-1$ and such that $C$ does not contain any of the sets $U_{j, k} \in \mathcal{U}$. Let $\overleftarrow{\jmath} \leq j' \leq \overrightarrow{\jmath}$ and $1 \leq k' \leq \delta_{j'}$.  
If $\left| C^A \right| \leq d-j'$ and $\left| C^B \right| \leq j'$, then there is a correspondence set in $\Z_{j',k'}$ that contains $D(C)$.
\end{proposition}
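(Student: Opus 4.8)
The plan is to produce a single set $T \in \W_{j',k'}$ with $C \subseteq T$ such that $D(T)$ is neither an $I_{j,k}$-resembler nor an $I_{j,k}$-enveloper for any $(j,k)$; then $D(T) \in \X_{j',k'}$ is not among the sets deleted in passing from $\Yn$ to $\Zn$, so $D(T) \in \Z_{j',k'}$ and $D(C) \subseteq D(T)$, which is what we want. First I would reduce to the case $\left| C \right| = d-1$: if $\left| C \right| < d-1$, enlarge $C$ to a $(d-1)$-set $\widetilde{C}$ with $|\widetilde{C}^A| \le d-j'$, $|\widetilde{C}^B| \le j'$, and $\widetilde{C}$ still containing no set of $\mathcal{U}$ --- this is exactly Lemma~\ref{lemma:extendtodminus1} (the boundary value $j'=\overrightarrow{\jmath}$ excluded there is handled by the same elementary extension, since $\mathbf{A}$ and $\mathbf{B}$ are far larger than $d$). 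After this, $m$-sectioned-covering for $\Wn$ supplies at least $m$ distinct $d$-sets $E_1,\dots,E_m \in \W_{j',k'}$ with $C \subseteq E_i$; since $\left| C \right| = d-1$, each $E_i = C \cup \{x_i\}$ for distinct $x_i \notin C$. It then suffices to bound by a constant the number of indices $i$ for which $D(E_i)$ is a deleted set, as the eventual choice $m \ge 13$ will exceed that constant.

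Bounding the envelopers is quick. If $D(C \cup \{x_i\})$ is an $I_{j,k}$-enveloper then $U_{j,k} \subseteq C \cup \{x_i\}$; since $C \not\supseteq U_{j,k}$ this forces $U_{j,k} \setminus C = \{x_i\}$, hence $|U_{j,k} \setminus C^A| = 1$ because $U_{j,k} \subseteq \mathbf{A}$. As $|C^A| \le d-j' \le \tfrac{3d}{4}$ (using $j' \ge \overleftarrow{\jmath} = \lceil d/4\rceil$), Lemma~\ref{lemma:avoidUs} shows there are at most three such $U_{j,k}$, each pinning down $x_i$ uniquely, so at most three of the $x_i$ arise from envelopers.

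The resembler count is the crux, and the key observation is that every resembler is obtained from the interpolation set it resembles by deleting a single element from the projection: comparing the defining formulas gives $\pi(R_{j,k}^a) = \pi(I_{j,k}) \setminus \{a\}$, $\pi(R_{j,\delta_j}^a) = \pi(I_{j,\delta_j}) \setminus \{a\}$, and $\pi(R_{j,\delta_j}^b) = \pi(I_{j,\delta_j}) \setminus \{b\}$, where the deleted element lies in $P_{j,k}$ in the type-$a$ cases and in $Q_{j,\delta_j}$ in the type-$b$ case. Thus if $D(C \cup \{x_i\})$ equals an $I_{j,k}$-resembler, then $C \cup \{x_i\} = \pi(I_{j,k}) \setminus \{e\}$ for the deleted element $e$, so $C = \pi(I_{j,k}) \setminus \{e,x_i\}$, and in particular $C^A \subseteq U_{j,k}$ with $|U_{j,k} \setminus C^A| \le 2$. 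The proof of Lemma~\ref{lemma:avoidUs} in fact yields --- replacing "$\le 1$" throughout by "$\le 2$" and the bound $\left| C \right|+4$ by $\left| C \right|+8$, which still contradicts $|P_1 \cup \dots \cup P_4| \ge \tfrac{13d}{16}$ from Lemma~\ref{lemma:Pjk-union} for large $d$ --- that at most three sets $U_{j,k}$ satisfy $|U_{j,k} \setminus C^A| \le 2$, i.e.\ at most three values of $(j,k)$ (the $U_{j,k}$ being pairwise distinct by Corollary~\ref{corollary:Udiff}). For each such $(j,k)$, the relation $C = \pi(I_{j,k}) \setminus \{e,x_i\}$ forces $\{e,x_i\} = \pi(I_{j,k}) \setminus C$, a two-element set, and $e$ must be the member of it lying in $P_{j,k}$ (resp.\ $Q_{j,\delta_j}$); so at most two resemblers of $I_{j,k}$ are in play, contributing at most two further bad $x_i$. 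Altogether at most $3 + 3\cdot 2 = 9$ of the $x_i$ are bad, so some $D(E_i)$ lies in $\Z_{j',k'}$ and contains $D(C)$.

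The hard part is precisely this resembler estimate: unlike in Section~\ref{section:Eisenbrand1}, a deleted correspondence set may resemble an interpolation set in a neighboring section, so one must first see that every resembler's projection differs from that of its interpolation set in a single element --- this is what turns the hypothesis "$C$ contains no $U_{j,k}$" into the usable bound $|U_{j,k} \setminus C^A| \le 2$ --- and then check that the counting argument of Lemma~\ref{lemma:avoidUs} survives the relaxation from one excess element to two. The reduction to $(d-1)$-sets is routine but must be done so that the enlarged set simultaneously avoids all of $\mathcal{U}$, which is exactly why Lemma~\ref{lemma:extendtodminus1} (and its straightforward extension to $j'=\overrightarrow{\jmath}$) is invoked.
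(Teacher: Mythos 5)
Your proof is correct and follows the same overall strategy as the paper: reduce to a $(d-1)$-set $\widetilde{C}$ avoiding all of $\mathcal{U}$ via Lemma~\ref{lemma:extendtodminus1}, take $m$ covering sets $E_1,\dots,E_m\in\W_{j',k'}$ with $E_i=\widetilde{C}\cup\{x_i\}$, and show that fewer than $m$ of the $D(E_i)$ can be deleted. The place where you diverge is in the resembler count. The paper splits into two cases according to whether $\widetilde{C}$ is a $(d-j',j'-1)$-set or a $(d-j'-1,j')$-set, uses that all $E_i$ then share either an $\mathbf{A}$-part or a $\mathbf{B}$-part, and counts up to three resemblers per interpolation pair, arriving at a total of at most $12$ bad indices and hence $m\geq 13$. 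You instead make the clean structural observation that every resembler $R$ satisfies $\pi(R)=\pi(I_{j,k})\setminus\{e\}$ for a single element $e$, so that if $E_i$ is a resembler then $C^A\subseteq U_{j,k}$ with $\left|U_{j,k}\setminus C^A\right|\leq 2$; a straightforward relaxation of Lemma~\ref{lemma:avoidUs} (same invocation of Lemma~\ref{lemma:Pjk-union}, constant $4$ replaced by $8$) caps the pairs $(j,k)$ at three, and the two-element set $\pi(I_{j,k})\setminus\widetilde{C}$ caps the resembler-bad $x_i$ per pair at two. This unifies the two cases, gives the tighter total $3+3\cdot 2=9$, and would actually let $m\geq 10$ suffice throughout (the paper needs $m\geq 13$ only because its per-pair bound is $3$ rather than $2$); the difference is immaterial for the asymptotic result. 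Your remark that Lemma~\ref{lemma:extendtodminus1} as stated excludes $j'=\overrightarrow{\jmath}$ is a fair catch --- the paper applies it at that boundary without comment --- and your note that the same elementary extension argument handles it is right.
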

\begin{proof}
Let $\widetilde{C}$ be the $(d-1)$-set obtained from $C$ using Lemma~\ref{lemma:extendtodminus1}. Then it will be sufficient to show that there is a correspondence set in $\Z_{j',k'}$ that contains $D(\widetilde{C})$. 

Now by the $m$-sectioned covering property for the layer family $\Wn$, there are $m$ distinct $d$-sets $E_1, \dots, E_m \in \W_{j',k'}$ that each contain $\widetilde{C}$. So the doubled sets $D(E_1), \dots, D(E_m) \in \Y_{j',k'}$ each contain $D(\widetilde{C})$. It will suffice to show that not all $m$ of these sets are removed in the construction of the layer family $\Zn$. The set $\widetilde{C}$ is either a $(d-j',j'-1)$-set or a $(d-j'-1,j')$-set.
\begin{itemize}
\item
Case 1: Suppose  $\widetilde{C}$ is a $(d-j', j'-1)$-set.

Since each $E_i$ is a $(d-j',j')$-set that contains $\widetilde{C}$, we have 
\begin{equation}\label{equation:EiACA}
E_i^A = \widetilde{C}^A \quad\text{and}\quad E_i^B = \widetilde{C}^B \cup \{b_i\}
\end{equation}
for some elements $b_1, \dots, b_m$ of $\mathbf{B}$. 

By Lemma~\ref{lemma:extendtodminus1}, the sets $E_i$ do not contain any $U_{j,k} \in \mathcal{U}$. In particular, $D(E_i)$ is not an $I_{j,k}$-enveloper for any values of $j$ and $k$. 

\textbf{Claim:} There are at most three pairs $(j,k)$ such that some $D(E_i)$ is an $I_{j,k}$-resembler.  

\textbf{Proof:} Suppose there are four such pairs $(j_1,k_1), \dots, (j_4,k_4)$. If $D(E_i)$ is an $I_{j,k}$-resembler, then by Proposition~\ref{proposition:2nd-removed-set-properties}(\ref{resembler-P}), each $E_i$ contains every element of $P_{j,k}$ (if type $b$) or every element of $P_{j,k}$ but one (if type $a$). We now calculate      
\[
\frac{3d}{4}
= \left| C^A \right|
= \left| \bigcup_{i=1}^p E_i^A \right|
\geq \left| \bigcup_{i=1}^4 P_{j_i,k_i} \right| - 4 
\geq \frac{13d}{16} - 4,
\]
where 
the first equality comes from~\eqref{equation:epsilon14},
the second equality comes from~\eqref{equation:EiACA}, and 
the last inequality comes from Lemma~\ref{lemma:Pjk-union}. This gives a contradiction when $d$ is large. 
Thus the claim is proved.

Now fix a pair $(j,k)$. Note that all $I_{j,k}$-resemblers of type $a$ have the same $\mathbf{B}$-part. But due to~\eqref{equation:EiACA} all of the sets $E_i$ have the same $\mathbf{A}$-part. We conclude that only one of the sets $D(E_i)$ can be an $I_{j,k}$-resembler of type $a$. 

Now suppose $k = \delta_j$ and consider the possibility that one or more of the $D(E_i)$ is an $I_{j,\delta_j}$-resembler of type $b$. If so, then $j = j'+1$ (to get the right-size $\mathbf{B}$-part, since $H_{j,\delta_j}$ is a $(j+1)$-subset of $\mathbf{B}$, thus $H_{j,\delta_j} \setminus \{b\}$ has cardinality $j$). For each such $E_i$, we have $\widetilde{C}^B \subseteq E_i^B \subseteq Q_{j,\delta_j}$, where $\left|\widetilde{C}^B\right| = j'-1$, $\left| E_i^B \right| = j'$, and $\left| Q_{j,\delta_j} \right| = j = j'+1$. 
There are exactly two $j$-sets $E \subseteq \mathbf{B}$ which satisfy $\widetilde{C}^B \subseteq E \subseteq Q_{j,\delta_j}$.
So at most two of the sets $E_i$ satisfy $\widetilde{C}^B \subseteq E_i^B \subseteq Q_{j,\delta_j}$.

From the claim, there are at most three pairs $(j,k)$ such that some $D(E_i)$ is an $I_{j,k}$ resembler.
For a fixed pair $(j,k)$, there are at most three (at most one of type $a$ and at most two of type $b$) sets of the form $D(E_i)$ that can be an $I_{j,k}$-resembler. Thus, there are at most nine sets of the form $D(E_i)$ which are resemblers.

In conclusion, for $m \geq 10$, we know that at least one $E_i$ is neither an enveloper nor a resembler, and hence belongs to $\Z_{j',k'}$.    

\item
Case 2: Suppose $\widetilde{C}$ is a $(d-j'-1,j')$-set.

Each $E_i$ is a $(d-j',j')$-set which contains the $(d-j'-1,j')$-set $\widetilde{C}$, thus
\begin{equation}\label{equation:EiACAa}
E_i^A = \widetilde{C}^A \cup \{a_i\} \quad\text{and}\quad E_i^B = \widetilde{C}^B 
\end{equation}
for some elements $a_1, \dots, a_m$ of $\mathbf{A}$. 

Suppose $D(E_i)$ is an $I_{j,k}$-enveloper. Then $E_i \supseteq U_{j,k}$. By Lemma~\ref{lemma:extendtodminus1}, $\widetilde{C}$ does not contain $U_{j,k} \subseteq \mathbf{A}$, so $E_i = C \cup \{a\}$ where $U_{j,k} \setminus C = \{a\}$. In particular, for each pair $(j,k)$, there can be at most one $i$ such that $E_i$ is an $I_{j,k}$-enveloper. Furthermore, by Lemma~\ref{lemma:avoidUs}, there are at most three sets of the form $U_{j,k} \in \mathcal{U}$ that satisfy $\left| U_{j,k} \setminus C \right| \leq 1$. So at most three of the sets $E_i$ are envelopers. 

\textbf{Claim:} There are at most three pairs $(j,k)$ such that some $E_i$ is an $I_{j,k}$-resembler.  

%fix start
\textbf{Proof:}
Suppose there are four such pairs $(j_1,k_1), \dots, (j_4,k_4)$. If $D(E_i)$ is an $I_{j,k}$-resembler, then by Proposition~\ref{proposition:2nd-removed-set-properties}(\ref{resembler-P}), each $E_i$ contains every element of $P_{j,k}$ (if type $b$) or every element of $P_{j,k}$ but one (if type $a$). We now calculate      
\[
\frac{3d}{4} = \left| C^A \right| =  \left| \bigcup_{i=1}^p E_i^A \right| - 4 \geq  \left| \bigcup_{i=1}^4 P_{j_i,k_i} \right| - 8  \geq  \frac{13d}{16} - 8,
\]
where 
the first equality comes from~\eqref{equation:epsilon14},
the second equality comes from~\eqref{equation:EiACAa}, and 
the last inequality comes from Lemma~\ref{lemma:Pjk-union}. This gives a contradiction when $d$ is large. 
Thus the claim is proved.
% fix end

Now fix a pair $(j,k)$. If $k=\delta_j$, so that there exist $I_{j,k}$-resemblers of type $b$, then note that all $I_{j,k}$-resemblers of type $b$ have the same $\mathbf{A}$-part. But all of the sets $E_i$ have the same $\mathbf{B}$-part. We conclude that only one can be an $I_{j,k}$-resembler of type $b$. 

Suppose one or more of the $E_i$ is an $I_{j,k}$-resembler of type $a$. If so, then $d-j = d-j'$ (that is, $j=j'$) in order to get the right-size $\mathbf{A}$-part). For each such $E_i$, we have $\widetilde{C}^A \subseteq E_i^A \subseteq P_{j,k}$, where $\left|\widetilde{C}^A \right| = d-j'-1$, $\left| E_i^B \right| = d-j'$, and $\left| P_{j,k} \right| = d-j'+1$. So there can be only two such $E_i$. 

Just as in case 1, we conclude that at most nine (at most three resemblers for each pair $(j,k)$ and at most three such pairs) of the $E_i$ are resemblers, and we also have shown that at most three of them are envelopers. Thus, there are at most twelve of the sets $D(E_1),\dots,D(E_m)$ in the layer $\Y_{j',k'}$ are removed resemblers or envelopers. So for $m \geq 13$, at least one $E_i$ belongs to $\Z_{j',k'}$.

\end{itemize}
In summary, by choosing $m \geq 13$, for each set $C$ with appropriate $({\mathbf A},{\mathbf B})$-cardinality not containing any $U_{j,k}$, there is a correspondence set $D(E_i)$ in $\Y_{j',k'}$ which was not removed in constructing $\Z_{j',k'}$.
\end{proof}

\begin{proposition}\label{proposition:2ndcovering-converse}
Let $C$ be a subset of $\mathbf{A} \cup \mathbf{B}$ such that $\left| C \right| \leq d-1$ and such that $C$ does not contain any of the sets $U_{j, k} \in \mathcal{U}$. Let $\overleftarrow{\jmath} \leq j' \leq \overrightarrow{\jmath}$ and $1 \leq k' \leq \delta_{j'}$.  
If any $2d$-set $S \in \Z_{j',k'}$ satisfies the condition $\pi(S) \supset C$, then either:
\begin{itemize}
\item $\left| C^A \right| \leq d-j'$ and $\left| C^B \right| \leq j'$, or
\item $\left| C^A \right| \leq d-j'$, $\left| C^B \right| = j' + 1$, and $k' = \delta_j'$. 
\end{itemize}
\end{proposition}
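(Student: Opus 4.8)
The plan is a case analysis on the type of the $2d$-set $S$. Since $S \in \Z_{j',k'}$, which is obtained from $\Y_{j',k'}$ by deleting some correspondence sets, $S$ is either a correspondence set $D(E)$ with $E \in \W_{j',k'}$ or the unique interpolation set that was placed in layer $(j',k')$. In every case I will record $\Awidth(S)$ and $\Bwidth(S)$ and then use $C \subseteq \pi(S)$, which forces $\left| C^A \right| \leq \Awidth(S)$ and $\left| C^B \right| \leq \Bwidth(S)$. The correspondence case is immediate: every $d$-set in the $j'$-th mesh, and hence in $\W_{j',k'}$, is a $(d-j',j')$-set, so $\Awidth(D(E)) = d-j'$ and $\Bwidth(D(E)) = j'$, putting us in the first alternative of the conclusion. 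This also covers the very last layer $(\overrightarrow{\jmath},\delta_{\overrightarrow{\jmath}})$, which contains no interpolation set at all.

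Next I would handle a mid-section interpolation set $S = I_{j',k'}$ with $k' < \delta_{j'}$. Here $\pi(S) \cap \mathbf{A} = U_{j',k'}$ by Lemma~\ref{lemma:2nd-interp-properties}(\ref{piIjk-is-Ujk}), and this is a $(d-j'+1)$-subset of $\mathbf{A}$ by part~(\ref{Ujk-card}), while $\pi(S) \cap \mathbf{B} = Q_{j',k'}$ has size $j'$ by~\eqref{equation:Ijk}. A priori this only gives $\left| C^A \right| \leq d-j'+1$, so the one genuinely nontrivial step is to exclude $\left| C^A \right| = d-j'+1$. If that held, then $C^A \subseteq U_{j',k'}$ combined with the equality of cardinalities would force $C^A = U_{j',k'}$, hence $U_{j',k'} \subseteq C$ with $U_{j',k'} \in \mathcal{U}$, contradicting the standing hypothesis on $C$. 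Thus $\left| C^A \right| \leq d-j'$ and $\left| C^B \right| \leq j'$: again the first alternative.

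Finally I would handle the end-of-section interpolation set $S = I_{j',\delta_{j'}}$, which forces $k' = \delta_{j'}$. Here $\pi(S) \cap \mathbf{A} = U_{j',\delta_{j'}}$ is a $(d-j')$-subset of $\mathbf{A}$, again by Lemma~\ref{lemma:2nd-interp-properties}(\ref{piIjk-is-Ujk}) and~(\ref{Ujk-card}), while $\pi(S) \cap \mathbf{B} = Q_{j',\delta_{j'}} \cup \{h_{j',\delta_{j'}}\}$ has size $j'+1$ by~\eqref{equation:Ijdeltaj}. So $\left| C^A \right| \leq d-j'$ holds unconditionally, and either $\left| C^B \right| \leq j'$ (the first alternative) or $\left| C^B \right| = j'+1$, in which case the condition $k' = \delta_{j'}$ already in force yields the second alternative. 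Apart from confirming that these cases are exhaustive, I anticipate no real obstacle: the only step beyond width bookkeeping is the exclusion in the mid-section case, where the hypothesis that $C$ omits every $U_{j,k}$ is precisely what stops $C$ from absorbing the single extra unit of $\mathbf{A}$-width that an interpolation set carries over a correspondence set within the same section.
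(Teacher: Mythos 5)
Your proof is correct and takes essentially the same approach as the paper's: split on whether $S$ is a correspondence set, a mid-section interpolation set $I_{j',k'}$ with $k' < \delta_{j'}$, or an end-of-section interpolation set $I_{j',\delta_{j'}}$; compare $\mathbf{A}$- and $\mathbf{B}$-widths; and in the mid-section case, use the hypothesis that $C$ contains no $U_{j,k}$ to rule out $\left| C^A \right| = d - j' + 1$. Your added remark that the very last layer contains no interpolation set and so falls under the correspondence case is a sound observation about exhaustiveness that the paper leaves implicit.
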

\begin{proof}
 A $2d$-set $S$ in $\Z_{j',k'}$ is either a correspondence set, an interpolation set $I_{j',k'}$ where $k' < \delta_{j'}$, or an interpolation set $I_{j',\delta_{j'}}$.

First suppose $S$ is a correspondence set in $\Z_{j',k'}$ such that $\pi(S) \supseteq C$. 
Then $\left| C^A \right| \leq \Awidth(S) = d-j'$ 
and $\left| C^B \right| \leq \Bwidth(S) = j'$. 

Next suppose that $S = I_{j',k'}$ is an interpolation set and that $k' \neq \delta_{j'}$. 
Then $\left| C^A \right| \leq \Awidth(S) = d-j'+1$ and $\left| C^B \right| \leq \Bwidth(S) = j'$. 
So if $\left| C^A \right| \leq d-j'$, then we're done. 
Otherwise, $\left| C^A \right| = d-j'+1$ and in fact $C^A = \pi(I_{j',k'}) \cap \mathbf{A}$. 
But by Lemma~\ref{lemma:2nd-interp-properties}(\ref{piIjk-is-Ujk}), $\pi(I_{j,k}) \cap \mathbf{A} = U_{j',k'}$, contradicting the assumption that $C$ does not contain any such set. 

Finally, suppose that $k' = \delta_{j'}$ and that $S=I_{j',\delta_{j'}} \supset C$.
Then $\left| C^A \right| \leq \Awidth(S) = d-j'$ and $\left| C^B \right| \leq \Bwidth(S) = j'+1$. 
So $C$ satisfies the desired properties.   
\end{proof}

\begin{corollary} \label{corollary:2ndcovering}
Let $S$ be a subset of $[2] \times (\mathbf{A} \cup \mathbf{B})$ such that $\pi(S)$ does not contain any of the sets $U_{j, k}$. Then $S$ is active on layer $\Z_{j', k'}$ if and only if one of the following conditions hold:
\begin{itemize}
\item $\Bwidth(S) \leq j' \leq d-\Awidth(S)$, or
\item $\Bwidth(S)-1 \leq j' \leq d-\Awidth(S)$ and $k' = \delta_{j'}$.
\end{itemize}
\end{corollary}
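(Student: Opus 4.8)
The plan is to read both directions of the equivalence off Propositions~\ref{proposition:2ndcovering} and~\ref{proposition:2ndcovering-converse}, after recording the trivial dictionary $\left|\pi(S)\cap\mathbf{A}\right| = \Awidth(S)$ and $\left|\pi(S)\cap\mathbf{B}\right| = \Bwidth(S)$. Throughout write $C = \pi(S)$, with $\mathbf{A}$-part $C^A$ and $\mathbf{B}$-part $C^B$; by hypothesis $C$ contains none of the sets $U_{j,k}\in\mathcal{U}$. Since the two propositions are stated for $\left|C\right|\leq d-1$, I would first reduce to the range $\width(S)\leq d-1$, which is the one needed for dimension reduction anyway: if $\width(S)\geq d+2$ then by Lemma~\ref{lemma:2ndwidthclassificationinZ} no $2d$-set of $\Zn$ contains $S$ (and neither bullet can hold), while the cases $\width(S)\in\{d,d+1\}$ are handled directly by Lemma~\ref{lemma:2ndwidthclassificationinZ} together with Lemma~\ref{lemma:2nd-interp-properties}(\ref{piIjk-is-Ujk}), which forces any interpolation set containing such an $S$ to have $\mathbf{A}$-part $U_{j',k'}\subseteq\pi(S)$, contrary to hypothesis.

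For the forward implication, suppose $S$ is active on $\Z_{j',k'}$, witnessed by a $2d$-set $E\in\Z_{j',k'}$ with $S\subseteq E$. Then $\pi(E)\supseteq\pi(S)=C$, so Proposition~\ref{proposition:2ndcovering-converse} applies to $C$ and the $2d$-set $E$ and yields either $\left|C^A\right|\leq d-j'$ with $\left|C^B\right|\leq j'$, or $\left|C^A\right|\leq d-j'$ with $\left|C^B\right|=j'+1$ and $k'=\delta_{j'}$. Translating via the dictionary, the first alternative reads $\Bwidth(S)\leq j'\leq d-\Awidth(S)$, which is the first bullet; the second alternative gives $j'=\Bwidth(S)-1$ together with $\Bwidth(S)-1\leq d-\Awidth(S)$ and $k'=\delta_{j'}$, which is the second bullet. (Alternatively one splits on the type of $E$: a correspondence set is a doubled $(d-j',j')$-set, and the two kinds of interpolation set in $\Z_{j',k'}$ have $\mathbf{A}$-part $U_{j',k'}$ and $\mathbf{B}$-part $Q_{j',k'}$ respectively $H_{j',\delta_{j'}}$, again forcing the stated cardinality constraints once $C^A\neq U_{j',k'}$ is used.)

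For the backward implication, first assume the first bullet $\Bwidth(S)\leq j'\leq d-\Awidth(S)$. Then $\left|C^A\right|\leq d-j'$, $\left|C^B\right|\leq j'$, and $\left|C\right|\leq d-1$, so Proposition~\ref{proposition:2ndcovering} furnishes a correspondence set $D(E)\in\Z_{j',k'}$ with $D(E)\supseteq D(C)\supseteq S$; hence $S$ is active on $\Z_{j',k'}$. If instead only the second bullet holds, so $j'=\Bwidth(S)-1$ and $k'=\delta_{j'}$, then $\left|C^B\right|=j'+1$ exceeds $j'$ and no correspondence set of section $j'$ has enough room on the $\mathbf{B}$-side to contain $S$; here I would instead exhibit the bridging interpolation set $I_{j',\delta_{j'}}$ itself as a member of $\Z_{j',\delta_{j'}}$ containing $S$, using that $\pi(I_{j',\delta_{j'}})$ has $\mathbf{A}$-part $U_{j',\delta_{j'}}$ of size $d-j'\geq\Awidth(S)$ and $\mathbf{B}$-part $H_{j',\delta_{j'}}$ of size $j'+1=\Bwidth(S)$, and that no interpolation set is removed in passing from $\Yn$ to $\Zn$. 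This is exactly the extra adjacency between section $\Bwidth(S)-1$ and section $\Bwidth(S)$ that the sets $I_{j,\delta_j}$ were introduced to create.

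The width bookkeeping and the two citations are routine; the genuinely delicate step is the second bullet in the backward direction. Unlike the first bullet, it is not covered by the $m$-sectioned-covering machinery packaged into Proposition~\ref{proposition:2ndcovering}: one must determine precisely which subsets $S$ embed into the \emph{single} set $I_{j',\delta_{j'}}$, and matching $\mathbf{A}$- and $\mathbf{B}$-cardinalities is not by itself enough, since $I_{j',\delta_{j'}}$ has two ``thin'' columns (the one containing only $(1,f_{j',\delta_j})$ and the one containing only $(2,h_{j',\delta_j})$). It is here that the hypothesis that $\pi(S)$ avoids every $U_{j,k}$ must be leveraged, in the same spirit as Cases~2a--2d in the proof of Lemma~\ref{lemma:2ndadjacentlayerZ}, to exclude the configurations in which $S$ would slide off those special columns; this is the part of the argument I would write out most carefully.
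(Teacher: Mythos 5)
Your treatment of the forward implication and the first bullet of the backward implication mirrors the paper: both read these directly off Propositions~\ref{proposition:2ndcovering} and~\ref{proposition:2ndcovering-converse}, and your preliminary reduction to $\width(S) \leq d-1$ simply makes explicit what those propositions assume.

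The step you yourself flag as delicate --- the backward implication for the second bullet --- is, however, not merely delicate: it is false, and the argument you sketch would not close it. Matching $\mathbf{A}$- and $\mathbf{B}$-cardinalities does not force $S \subseteq I_{j',\delta_{j'}}$. Concretely, pick $j'$ with $\overleftarrow{\jmath} \leq j' \leq \overrightarrow{\jmath}$ and take $S \subseteq \{1\} \times \mathbf{B}$ with $\left| \pi(S) \right| = j'+1$. Then $\pi(S) \subseteq \mathbf{B}$ avoids every $U_{j,k}$, as these all lie in $\mathbf{A}$, and the second bullet holds with $k' = \delta_{j'}$; yet no set of $\Z_{j',\delta_{j'}}$ contains $S$: correspondence sets in that layer have $\mathbf{B}$-width $j'$, and the first row of $I_{j',\delta_{j'}}$ meets $\mathbf{B}$ only in $Q_{j',\delta_{j'}}$, of cardinality $j'$, which is too small to accommodate $\pi(S)$. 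You have in fact uncovered an imprecision in the statement of the corollary itself, not only in your proposal: as an ``if and only if'' it overclaims. The paper's own proof establishes only the two implications that are actually used --- the first bullet forces activity, and activity forces one of the two bullets --- and Case~5 of the proof of Proposition~\ref{prop:2nddimensionreduction} accordingly hedges with ``possibly'' when discussing the layer $\Z_{r-1,\delta_{r-1}}$. The remedy is to restate the corollary as two one-sided implications, or equivalently to drop the second bullet from the ``if'' direction.
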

\begin{proof}
Let $C = \pi(S)$. If $\Bwidth(S) \leq j' \leq d-\Awidth(S)$, then a correspondence set in $\Z_{j',k'}$ contains $D(C)$ which in turn contains $S$ by Proposition~\ref{proposition:2ndcovering}. Conversely, if there is a $2d$-set $T \in \Z_{j', \ell'}$ such that $T \supset S$, then $\pi(T) \supseteq \pi(S) = C$. So by Proposition~\ref{proposition:2ndcovering-converse}, either $\Bwidth(S) \leq j' \leq d-\Awidth(S)$ or $\Bwidth(S)-1 \leq j' \leq d-\Awidth(S)$ and $k' = \delta_{j'}$.
\end{proof}

Restated, Corollary~\ref{corollary:2ndcovering} says that the layers of $\Zn$ on which $S$ is active comprise a sequence of complete and consecutive sections. 

\begin{proposition} \label{prop:2nddimensionreduction}
The family $\Zn$ has dimension reduction.
\end{proposition}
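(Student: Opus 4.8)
The plan follows the pattern of the proof of Theorem~\ref{theorem:dimensionreductioninZ}. Fix $S\subseteq[2]\times(\mathbf A\cup\mathbf B)$ with $\left|S\right|\le 2d-1$; we must show that the set of layers of $\Zn$ on which $S$ is active is an interval of the path, and we argue by cases on $\width(S)$. If $\width(S)\ge d+2$, no layer contains $S$ (Lemma~\ref{lemma:2ndwidthclassificationinZ}(1)); if $\width(S)=d+1$, at most one does (Lemma~\ref{lemma:2ndwidthclassificationinZ}(2)); if $\width(S)=d$, Lemma~\ref{lemma:2ndadjacentlayerZ} gives at most two active layers, which are equal or consecutive. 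Each of these is trivially an interval, so the real case is $\width(S)\le d-1$, which splits according to whether $\pi(S)$ contains a member of $\mathcal U$.

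If $\width(S)\le d-1$ and $\pi(S)$ contains no $U_{j,k}$, then Corollary~\ref{corollary:2ndcovering} pins down the active layers exactly: all layers of sections $\Bwidth(S)$ through $d-\Awidth(S)$, together with the final layer $\Z_{\Bwidth(S)-1,\delta_{\Bwidth(S)-1}}$ of the preceding section when it occurs, all intersected with the range of sections $\overleftarrow{\jmath},\dots,\overrightarrow{\jmath}$ that are present. Since the last layer of a section is immediately followed in the path by the first layer of the next, this is one run of consecutive layers, hence an interval.

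The remaining case, $\width(S)\le d-1$ with $\pi(S)\supseteq U_{j,k}$ for some $(j,k)$, is the heart of the matter. Corollary~\ref{corollary:Udiff} (used in both directions, by symmetry of its statement) gives two structural facts. First, if $S$ lies in an interpolation set $I$, then $\pi(I)\cap\mathbf A$, which is itself a member of $\mathcal U$, contains $\pi(S)\cap\mathbf A\supseteq U_{j,k}$; since no member of $\mathcal U$ properly contains another, $I=I_{j,k}$. Hence $S$ lies in at most one interpolation set, and in none if $\pi(S)$ contains two distinct members of $\mathcal U$. Second, if some $U_{j,k}\subseteq\pi(S)$ has $k<\delta_j$, then any correspondence set $D(C)\supseteq S$ has $C\supseteq U_{j,k}$, so $D(C)$ is an $I_{j,k}$-enveloper and was removed from $\Zn$, leaving $S$ contained in at most one set of $\Zn$ and hence active on at most one layer. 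This reduces us to the subcase where every $U_{j,k}\subseteq\pi(S)$ is an ``end-of-section'' set $U_{j,\delta_j}$, for which no envelopers were removed --- the delicate one, and the main obstacle.

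In that subcase $\Awidth(S)\ge d-j$ for every such $j$, so a correspondence set containing $S$ can live only in a section $j'$ with $\Bwidth(S)\le j'\le d-\Awidth(S)$. For each such section and each of its layers I would exhibit a surviving correspondence superset of $S$ by re-running the argument of Proposition~\ref{proposition:2ndcovering}: since $\pi(S)$ does contain $U_{j,\delta_j}$ one cannot quote that proposition verbatim, so instead one extends $\pi(S)$ to a suitable $(d-1)$-set that avoids every member of $\mathcal U$ except those already contained in $\pi(S)$ --- a variant of Lemma~\ref{lemma:extendtodminus1}, using that these exceptional $U$'s carry no envelopers --- then applies the $m$-sectioned covering of $\Wn$ to get $m$ distinct doubled supersets of $S$ in each layer, and bounds by an absolute constant (via Lemmas~\ref{lemma:avoidUs}, \ref{lemma:Pjk-union}, and~\ref{lemma:Pjk-diff}) how many of them can be resemblers or envelopers, so that a survivor remains once $m$ is large enough. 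Thus $S$ is active on every layer of sections $\Bwidth(S)$ through $d-\Awidth(S)$; and the at most one interpolation layer adds nothing new, because if $S\subseteq I_{j,\delta_j}$ then $\pi(S)\cap\mathbf A=U_{j,\delta_j}$, so $\Awidth(S)=d-j$ and $\Z_{j,\delta_j}$ lies in the top section $d-\Awidth(S)$ of that very run. In all cases the active set is an interval, which is dimension reduction.
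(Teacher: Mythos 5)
Your Cases 1--3 (widths $d+2$, $d+1$, $d$) and the case of width $\le d-1$ with $\pi(S)$ disjoint from $\mathcal U$ match the paper verbatim. The genuine divergence, and the most valuable observation in your write-up, is that you have found a gap in the paper's own Case 4. The paper argues there that whenever $\pi(S)\cap\mathbf{A}\supseteq U_{j,k}$, \emph{every} correspondence superset of $D(\pi(S))$ is an $I_{j,k}$-enveloper and hence was removed from $\Zn$; but the paper only defines envelopers for $k\leq\delta_j-1$ and explicitly states that $I_{j,\delta_j}$-envelopers are ``not defined'' (and so not removed). Consequently, when the only sets of $\mathcal U$ contained in $\pi(S)\cap\mathbf{A}$ are end-of-section sets $U_{j,\delta_j}$, the paper's conclusion that $S$ is active on at most the single layer $\Z_{j,k}$ is false. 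A concrete witness: for $\pi(S)=U_{j,\delta_j}$, the correspondence set $D(F_{j,\delta_j}\cup Q_{j,\delta_j})\in\Z_{j,\delta_j}$ survives and contains $D(\pi(S))$, and $m$-sectioned covering supplies further correspondence supersets in layers of every section $j'$ with $\Bwidth(S)\leq j'\leq j$; dimension reduction does hold here, but not for the reason the paper gives.

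Your repair is the right strategy: observe that in this subcase $\Awidth(S)\geq d-j$ for each relevant $j$, so any correspondence superset lives in a section between $\Bwidth(S)$ and $d-\Awidth(S)$; show (by a variant of Lemma~\ref{lemma:extendtodminus1} tolerating the $U_{j,\delta_j}$'s already inside $\pi(S)$, followed by the resembler/enveloper budget of Proposition~\ref{proposition:2ndcovering}) that in fact $S$ survives on \emph{every} such layer; and note that if $S\subseteq I_{j,\delta_j}$ then $\pi(S)\cap\mathbf{A}=U_{j,\delta_j}$ forces $\Awidth(S)=d-j$, so the lone interpolation layer $\Z_{j,\delta_j}$ is exactly the final layer of the top active section. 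Two details would still need to be written out: the extension variant must avoid creating any \emph{new} $U_{j',k'}\subseteq\widetilde{C}^A$ (here Lemma~\ref{lemma:avoidUs} gives a budget of three ``near'' sets $U$, which must now also account for the $U_{j,\delta_j}$'s already present), and the removed-set count per layer must be rechecked to stay below $m$. But the plan is sound and, as far as I can tell, necessary: the paper's Case 4 argument, as written, does not reach its stated conclusion when $k=\delta_j$.
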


\begin{proof}
We will show that for every set $S \subseteq [2] \times (\mathbf{A} \cup \mathbf{B})$, the set of layers on which $S$ is active is an interval in $\Zn$.

Recall that $\width(S) = \left| \pi(S) \right|$, $\Awidth(S) = \left| \pi(S) \cap \mathbf{A} \right|$, and $\Bwidth(S) = \left| \pi(S) \cap \mathbf{B} \right|$.  

\begin{itemize}

\item
\textbf{Case 1:} Suppose $\width(S) = d+2$. 

Since all correspondence sets have width $d$ and all interpolation sets have width $d+1$, thus $S$ is not active on any layer of $\Zn$.  

\item
\textbf{Case 2:} Suppose $\width(S) = d+1$. 

Then $S$ is not contained in any correspondence set and by Corollary~\ref{corollary:I-intersection}, $S$ is contained in at most one interpolation set. So $S$ is active on either no layers or exactly one layer of $\Zn$.  

\item
\textbf{Case 3:} Suppose $\width(S) = d$. 

Then by Lemma~\ref{lemma:2ndadjacentlayerZ}, $S$ is active on either no layers, one layer, or two adjacent layers of $\Zn$. 

\item
\textbf{Case 4:} Suppose $\width(S) \leq d-1$ and $(\pi(S) \cap \mathbf{A}) \supseteq U_{j,k}$ for some $j$ and $k$. 

Let $C = \pi(S)$. If $D(E)$ is a correspondence set in $\Yn$ such that $E \supseteq C$, then $D(E)$ is an $I_{j,k}$-enveloper, and thus does not appear anywhere in $\Zn$. So we only need to show that no interpolation set other than $I_{j,k}$ satisfies the property that  its projection contains $C$. 

First, suppose $I_{j'',k''} \supseteq C$ for some $(j'',k'') \neq (j,k)$ with $k'' < \delta_{j''}$. Then
\begin{align*}
P_{j'',k''} \cup \{f_{j''},g_{j''}\} & =  U_{j'',k''} \\ 
  & =  \pi(I_{j'',k''}) \cap \mathbf{A} \\
& \supseteq  C \cap \mathbf{A} \\
& \supseteq  U_{j,k} \\
& \supseteq  P_{j,k} 
\end{align*} 
and so $ \left| P_{j,k} \setminus P_{j'',k''} \right| \leq 2$, 
violating Lemma~\ref{lemma:Pjk-diff}. 

On the other hand, suppose $\pi(I_{j'',\delta_{j''}}) \supseteq C$ for some $j''$. We again calculate 
\begin{align*}
P_{j'',k''} \cup \{f_{j''}\} & =  U_{j'',k''} \\ 
  & =  \pi(I_{j'',k''}) \cap \mathbf{A} \\
& \supseteq  C \cap \mathbf{A} \\
& \supseteq  U_{j,k} \\
& \supseteq  P_{j,k} 
\end{align*} 
and so $ \left| P_{j,k} \setminus P_{j'',k''} \right| \leq 1$,
again violating Lemma~\ref{lemma:Pjk-diff}.

We conclude that either $S$ is active only on layer $\Z_{j,k}$ or $S$ is active on no layers at all. 

\item
\textbf{Case 5:} Suppose that none of the above cases hold. That is, $\width(S) \leq d-1$ and there are no values of $j, k$ for which $(\pi(S) \cap \mathbf{A}) \supseteq U_{j,k}$. 

Let $q = \Awidth(S)$ and $r=\Bwidth(S)$. 
Then by Corollary~\ref{corollary:2ndcovering}, $S$ is active on all layers of sections $r,r+1,\dots,d-q$ and possibly on $\Z_{r-1,\delta_{r-1}}$, but on no others. These layers form an interval.  

\end{itemize}
In all cases this set of layers on which $S \subseteq [2] \times (\mathbf{A} \cup \mathbf{B})$ is active is either empty, a single layer, two consecutive layers within a section, or all layers within some number of consecutive sections. In particular, the set of layers on which $S$ is active is an interval in $\Zn$, thus $\Zn$ satisfies dimension reduction.
\end{proof}

\begin{theorem}\label{theorem:2ndZsummary}
The layer family $\Zn$ has endpoint count, strong adjacency, and dimension reduction. The diameter of $\Zn$ is in $\Omega(n^2/\log n)$.\hfill\qedsymbol
\end{theorem}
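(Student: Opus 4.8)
The plan is to assemble the results established in the preceding propositions, since each of the four assertions has essentially already been proved. Endpoint count for $\Zn$ is Proposition~\ref{proposition:2ndendpointcountinZ}. Strong adjacency is, for a layer family, the conjunction of adjacency and linkage: adjacency is Proposition~\ref{proposition:2ndadjacencyinZ}, while Proposition~\ref{proposition:2ndlinkageinZ} shows that the linking pairs $I_{j,k},\, D(G_{j,k}\cup Q_{j,k})$ and $I_{j,\delta_j},\, D(P_{j,\delta_j}\cup H_{j,\delta_j})$ of Proposition~\ref{proposition:2ndlinkageinY} survive the passage from $\Yn$ to $\Zn$. Dimension reduction is Proposition~\ref{prop:2nddimensionreduction}. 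So the only point that still needs a word is the diameter bound.

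For the diameter, I would observe that forming $\Zn$ from $\Yn$ only deletes sets from layers; it neither creates nor destroys layers, so $\Zn$, $\Yn$, $\Xn$, and $\Wn$ all have the same number of layers. One must check that no layer of $\Zn$ is empty, which follows from Proposition~\ref{proposition:2ndcovering} applied with $C = \emptyset$: every layer $\Z_{j',k'}$ still contains a correspondence set. Hence $\Zn$ is a layer family whose underlying graph is a path on the same vertex set as $\Wn$, so $\delta(\Zn) = \delta(\Wn)$. Taking the fixed choices $\epsilon = \tfrac14$ and $m = 13$ made earlier, Proposition~\ref{proposition:2nd-diameter-W} gives $\delta(\Zn) \geq \tfrac{1}{1248}\cdot\tfrac{n^2}{\ln(n/2)} - 1$, which lies in $\Omega(n^2/\log n)$.

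I do not expect a genuine obstacle in proving this theorem itself: the substantive work --- showing that removing the envelopers and the resemblers of types $a$ and $b$ restores adjacency and dimension reduction without destroying enough of the sectioned covering property, which is precisely what forces $m \geq 13$ --- has already been carried out in Lemmas~\ref{lemma:avoidUs} and~\ref{lemma:extendtodminus1}, Proposition~\ref{proposition:2ndcovering}, Corollary~\ref{corollary:2ndcovering}, and the width classification of Lemma~\ref{lemma:2ndwidthclassificationinZ}. If any step here deserves care, it is confirming that the layer counts are genuinely unchanged at each stage $\Wn \to \Xn \to \Yn \to \Zn$ and that every resulting layer is nonempty, so that the diameter lower bound of Proposition~\ref{proposition:2nd-diameter-W} transfers verbatim to $\Zn$.
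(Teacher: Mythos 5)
Your proposal is correct and follows essentially the same route the paper intends: Theorem~\ref{theorem:2ndZsummary} is a summary of the preceding propositions (endpoint count from Proposition~\ref{proposition:2ndendpointcountinZ}, strong adjacency from Propositions~\ref{proposition:2ndadjacencyinZ} and~\ref{proposition:2ndlinkageinZ}, dimension reduction from Proposition~\ref{prop:2nddimensionreduction}), with the diameter bound inherited from Proposition~\ref{proposition:2nd-diameter-W} since the steps $\Wn \to \Xn \to \Yn \to \Zn$ preserve the layer count. Your extra care in invoking Proposition~\ref{proposition:2ndcovering} with $C=\emptyset$ to confirm that no layer of $\Zn$ is emptied by the removals (so that $\Zn$ is a bona fide layer family on the same path) is a detail the paper leaves implicit, and it is exactly the right thing to check.
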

This provides an answer to H\"ahnle's question in~\cite{Haehnle:ConstructingSPGs} of how adjacency and endpoint-count interact with dimension reduction, solves Problem~5.2 in~\cite{Kim:PolyhedralGraphAbstractions}, and answers the question of Eisenbrand et al.~in~\cite{Eisenbrand:Diameter-of-Polyhedra} of whether the diameter is affected by considering all previously-identified combinatorial properties for abstractions of polytopes. For further discussion and open problems about abstractions, we refer the reader to~\cite{Eisenbrand:CommentsRecentProgress} by Eisenbrand or the extensive exposition in Section 3 of~\cite{Santos:RecentProgress} by Santos.

\appendix

\section{Random collections of sets}\label{section:separation-lemmas}

In this section we prove certain properties of random collections of sets that we needed in the previous two sections. We perform our own calculation inspired by some results found in the literature, since the exercise by Feller (see~\cite{Feller}) is only applicable when the number $\mathsf{n}$ of samples drawn is constant.

Fix $0 < p_1, p_2 < 1$ and let $q_i = 1-p_i$ for $i=1,2$. Let $A$ and $B$ be subsets of $[N]=\{1,\dots,N\}$ chosen uniformly at random with $\left| A \right| = Np_1$, $\left| B \right| = Np_2$. Here and throughout this appendix, it is necessary to introduce appropriate floors and ceilings in various statements since the values $p_iN$ are not integers in general. The asymptotic results which we wish to establish in Proposition~\ref{proposition:randomintersection} and Theorem~\ref{theorem:randomintersection} are not affected by this technicality, and to improve readability, the appropriate floors and ceilings have not been written in the exposition.

By first fixing $A$ and then choosing $B$, we see that the size of the intersection is given by the \emph{hypergeometric distribution}:
\[\prob \left(\left| A \cap B \right| = k \right) = \prob(\mathbb{H}_{N,p_1,p_2} = k) =H_{N,p_1,p_2}(k) := \frac{\displaystyle\binom{Np_1}{k}\binom{Nq_1}{Np_2-k}}{\displaystyle\binom{N}{Np_2}},\]
where $\mathbb{H}_{N,p_1,p_2}$ is the hypergeometric random variable of the number ($k$) of successes when sampling $p_2N$ times without replacement from a population of $N$ with a success ratio of $p_1$.

\begin{lemma}\label{lemma:hypergeometric-tail}
Let $\mathsf{H}$ be the hypergeometric random variable of the number of successes when sampling $\mathsf{n}$ times from a population where $\mathsf{M}$ of the $\mathsf{N}$ elements of the population are considered successful. If ${\mathsf t} \geq 0$, then
\begin{enumerate}
\item\label{right-tail}
The probability that $\mathsf{H} \geq (\frac{\mathsf{M}}{\mathsf{N}}+{\mathsf t})\mathsf{n}$ is at most $e^{-2\mathsf{t}^2\mathsf{n}}$.
\item\label{left-tail}
The probability that $\mathsf{H} \leq (\frac{\mathsf{M}}{\mathsf{N}}-{\mathsf t})\mathsf{n}$ is at most $e^{-2\mathsf{t}^2\mathsf{n}}$.
\end{enumerate}\hfill\qedsymbol
\end{lemma}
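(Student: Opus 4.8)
The plan is to prove part~(\ref{right-tail}) and deduce part~(\ref{left-tail}) from it by a symmetry. For the deduction, note that if $\mathsf{H}$ is the number of successes among the $\mathsf{n}$ drawn elements then $\mathsf{n}-\mathsf{H}$ is the number of \emph{failures}, and $\mathsf{n}-\mathsf{H}$ is itself hypergeometric for the population in which the $\mathsf{N}-\mathsf{M}$ originally unsuccessful elements are relabelled as the successful ones. Since
\[
\bigl\{\mathsf{H} \le (\tfrac{\mathsf{M}}{\mathsf{N}}-\mathsf{t})\,\mathsf{n}\bigr\}
=
\bigl\{\mathsf{n}-\mathsf{H} \ge (\tfrac{\mathsf{N}-\mathsf{M}}{\mathsf{N}}+\mathsf{t})\,\mathsf{n}\bigr\},
\]
part~(\ref{left-tail}) is exactly part~(\ref{right-tail}) applied to $\mathsf{n}-\mathsf{H}$ with $\mathsf{M}$ replaced by $\mathsf{N}-\mathsf{M}$.

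For part~(\ref{right-tail}) I would use the method of bounded differences. Realize the sample as a sequence of draws without replacement and let $Z_j\in\{0,1\}$ record whether the $j$th draw is successful, so $\mathsf{H}=Z_1+\dots+Z_{\mathsf{n}}$; let $M_j=E[\mathsf{H}\mid Z_1,\dots,Z_j]$ be the Doob martingale, with $M_0=\tfrac{\mathsf{M}}{\mathsf{N}}\mathsf{n}$ and $M_{\mathsf{n}}=\mathsf{H}$. Writing $a$ for the number of still-successful elements and $m=\mathsf{N}-j+1$ for the number of elements remaining just before the $j$th draw, one has $M_{j-1}=(\mathsf{M}-a)+(\mathsf{n}-j+1)\tfrac{a}{m}$, and $M_j$ is the same expression with $a,m$ replaced by $a-Z_j,\,m-1$. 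Hence $M_j-M_{j-1}$ is affine in $Z_j$ with slope $1-\tfrac{\mathsf{n}-j}{\mathsf{N}-j}=\tfrac{\mathsf{N}-\mathsf{n}}{\mathsf{N}-j}$; in particular the dependence on the random quantity $a$ cancels, and the increment always lies in an interval of the deterministic length $c_j:=\tfrac{\mathsf{N}-\mathsf{n}}{\mathsf{N}-j}\le 1$ (valid for every $1\le j\le\mathsf{n}$).

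Now the range form of the Azuma--Hoeffding inequality gives $\prob(M_{\mathsf{n}}-M_0\ge\lambda)\le\exp\!\bigl(-2\lambda^2/\textstyle\sum_{j=1}^{\mathsf{n}}c_j^2\bigr)$, and the squared ranges telescope favorably:
\begin{align*}
\sum_{j=1}^{\mathsf{n}}c_j^2
&=(\mathsf{N}-\mathsf{n})^2\sum_{j=1}^{\mathsf{n}}\frac{1}{(\mathsf{N}-j)^2}
\le(\mathsf{N}-\mathsf{n})^2\sum_{j=1}^{\mathsf{n}}\frac{1}{(\mathsf{N}-j)(\mathsf{N}-j+1)}\\
&=(\mathsf{N}-\mathsf{n})^2\Bigl(\frac{1}{\mathsf{N}-\mathsf{n}}-\frac{1}{\mathsf{N}}\Bigr)
=\frac{(\mathsf{N}-\mathsf{n})\,\mathsf{n}}{\mathsf{N}}\le\mathsf{n}.
\end{align*}
Taking $\lambda=\mathsf{t}\mathsf{n}$ yields $\prob\bigl(\mathsf{H}\ge(\tfrac{\mathsf{M}}{\mathsf{N}}+\mathsf{t})\mathsf{n}\bigr)\le e^{-2\mathsf{t}^2\mathsf{n}}$, which is part~(\ref{right-tail}).

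The step I expect to be the main obstacle is the computation in the preceding paragraph: confirming that the increment range is exactly $\tfrac{\mathsf{N}-\mathsf{n}}{\mathsf{N}-j}$ (so that it is deterministic and bounded by $1$), which is precisely what forces the telescoped sum to be $\le\mathsf{n}$ and hence produces the constant $2$ in the exponent rather than a weaker constant. A fully equivalent alternative would shift the work elsewhere: by Hoeffding's observation that the sampling-without-replacement sum is dominated in the convex order by a $\mathrm{Bin}(\mathsf{n},\mathsf{M}/\mathsf{N})$ variable, the moment generating function $E[e^{s\mathsf{H}}]$ is bounded by $(1-p+pe^s)^{\mathsf{n}}$ with $p=\mathsf{M}/\mathsf{N}$, after which the ordinary Chernoff bound together with $\log(1-p+pe^s)\le ps+s^2/8$, optimized over $s>0$, finishes the estimate; there the obstacle becomes establishing the convex-order domination.
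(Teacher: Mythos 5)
The paper does not actually supply a proof of Lemma~\ref{lemma:hypergeometric-tail}; it cites Chv\'atal, Hoeffding (Section~6), and Skala and leaves the result as known. So your argument is a genuine proof where the paper has only a citation, and the Doob-martingale route you take is not the one used by those sources (Chv\'atal estimates the moment generating function via a Vandermonde identity, and Hoeffding uses the convex-order domination you mention as your alternative).

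Your reduction of part~(\ref{left-tail}) to part~(\ref{right-tail}) is correct, and the martingale setup is sound: $M_{j-1}=(\mathsf{M}-a)+(\mathsf{n}-j+1)\tfrac{a}{\mathsf{N}-j+1}$, the increment is affine in $Z_j$ with slope $\tfrac{\mathsf{N}-\mathsf{n}}{\mathsf{N}-j}$, the $a$-dependent part is $\mathcal{F}_{j-1}$-measurable, and each $c_j\le 1$. The range form of Azuma--Hoeffding then applies.

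However, there is a sign error in the ``favorable telescope'': since $\mathsf{N}-j+1>\mathsf{N}-j>0$, the product $(\mathsf{N}-j)(\mathsf{N}-j+1)$ exceeds $(\mathsf{N}-j)^2$, so
\[
\frac{1}{(\mathsf{N}-j)^2}\;\ge\;\frac{1}{(\mathsf{N}-j)(\mathsf{N}-j+1)},
\]
the opposite of the inequality you write. So the chain $\sum c_j^2\le(\mathsf{N}-\mathsf{n})^2\sum\tfrac{1}{(\mathsf{N}-j)(\mathsf{N}-j+1)}$ is false as stated; it in fact gives a \emph{lower} bound $\sum c_j^2 \ge \tfrac{(\mathsf{N}-\mathsf{n})\mathsf{n}}{\mathsf{N}}$. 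The conclusion you need, namely $\sum_{j=1}^{\mathsf{n}} c_j^2\le\mathsf{n}$, is nonetheless true and follows at once from the bound $c_j\le 1$ that you already establish in the preceding paragraph; simply delete the telescoping lines and cite that directly. (If you want the sharper quantity $\tfrac{(\mathsf{N}-\mathsf{n})\mathsf{n}}{\mathsf{N}}$ you should instead use $\tfrac{1}{(\mathsf{N}-j)^2}\le\tfrac{1}{(\mathsf{N}-j-1)(\mathsf{N}-j)}$, which requires $\mathsf{n}\le\mathsf{N}-2$ and a bit of bookkeeping; the extra strength is not needed here.) With that one-line fix the proof is correct and complete.
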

The upper bound in part~(\ref{right-tail}) on the probability of the hypergeometric tail is found in the note~\cite{Chvatal} by Chv\'atal, follows from the discussion in Section~6 of~\cite{Hoeffding} by Hoeffding, and is presented as inequality (9) in~\cite{Skala} by Skala. (This inequality for the right tail of the hypergeometric random variable is the analogue to similar results for the binomial random variable found in Theorem~1 of~\cite{Hoeffding} and Theorem~A.1.4 in~\cite{AlonSpencer}.) Part~(\ref{left-tail}) on the left tail can be immediately proved from part~(\ref{right-tail}), which is found as inequality~(14) in~\cite{Skala} by Skala.

\begin{proposition}\label{proposition:randomintersection}
Fix $0 < p_1, p_2 < 1$ and $\varepsilon > 0$. Suppose that $A_i$ is a set of cardinality $Np_i$ for $i=1,2$. Then 
\[ \prob\left( \left| A_1 \cap A_2 \right| > N(p_1 p_2 + \varepsilon) \right)\]
and
\[ \prob\left( \left| A_1 \cap A_2 \right| < N(p_1 p_2 - \varepsilon) \right)\]
decay exponentially in $N$, with all other parameters ($p_1,p_2,\varepsilon$) fixed. In particular, for any polynomial $f$, 
\[ \lim_{N \rightarrow \infty}f(N) \prob\left( \left| A_1 \cap A_2 \right| > N(p_1 p_2 + \varepsilon) \right) = \lim_{N \rightarrow \infty}f(N) \prob\left( \left| A_1 \cap A_2 \right| < N(p_1 p_2 - \varepsilon) \right) = 0.\]
\end{proposition}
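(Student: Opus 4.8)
The plan is to identify $\left| A_1 \cap A_2 \right|$ as a hypergeometric random variable and then invoke Lemma~\ref{lemma:hypergeometric-tail} with a carefully chosen deviation parameter. First I fix the set $A_1$ and regard the choice of $A_2$ as drawing $\mathsf{n} = Np_2$ samples without replacement from the population $[N]$, in which the $\mathsf{M} = Np_1$ elements of $A_1$ count as successes. Then $\mathsf{H} := \left| A_1 \cap A_2 \right|$ is precisely the hypergeometric random variable $\mathbb{H}_{N,p_1,p_2}$ described just before the statement, so $\mathsf{M}/\mathsf{N} = p_1$ and the mean is $\mathsf{n}\mathsf{M}/\mathsf{N} = Np_1p_2$.

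Next I take $\mathsf{t} = \varepsilon / p_2 > 0$, so that $(\mathsf{M}/\mathsf{N} + \mathsf{t})\mathsf{n} = (p_1 + \varepsilon/p_2)Np_2 = N(p_1p_2 + \varepsilon)$. Lemma~\ref{lemma:hypergeometric-tail}(\ref{right-tail}) then gives
\[
\prob\left(\left| A_1 \cap A_2 \right| \geq N(p_1p_2 + \varepsilon)\right) \leq e^{-2\mathsf{t}^2\mathsf{n}} = e^{-2(\varepsilon/p_2)^2 Np_2} = e^{-2\varepsilon^2 N / p_2},
\]
and the same choice of $\mathsf{t}$ in part~(\ref{left-tail}) yields $\prob\left(\left| A_1 \cap A_2 \right| \leq N(p_1p_2 - \varepsilon)\right) \leq e^{-2\varepsilon^2 N/p_2}$. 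With $p_1, p_2, \varepsilon$ fixed, the constant $c := 2\varepsilon^2/p_2$ is a fixed positive number, so both probabilities are at most $e^{-cN}$ and hence decay exponentially in $N$. The final ``in particular'' assertion follows since $\lim_{N\to\infty} f(N)e^{-cN} = 0$ for any polynomial $f$ and any $c > 0$.

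The one point requiring care is the one already flagged in the text: $Np_1$ and $Np_2$ need not be integers, so strictly one works with $\lfloor Np_i \rfloor$ or $\lceil Np_i \rceil$. These replacements shift both the mean and the value of $\mathsf{t}$ realizing the threshold $N(p_1p_2 \pm \varepsilon)$ by only an $O(1/N)$ amount, so for all sufficiently large $N$ one still has $\mathsf{t} \geq \varepsilon/(2p_2)$ and $\mathsf{n} \geq Np_2/2$, which keeps the exponent linear in $N$; I would dispatch this with a single sentence rather than carrying floors and ceilings through the estimate. I do not expect any genuine obstacle here: the argument is a direct specialization of the Hoeffding--Chv\'atal hypergeometric tail inequality, and the only real content is matching parameters so that the scaled deviation $\mathsf{t}\mathsf{n}$ equals $N\varepsilon$.
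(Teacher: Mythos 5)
Your proof is correct and matches the paper's proof exactly: both fix $A_1$, identify $\left| A_1 \cap A_2 \right|$ with the hypergeometric random variable $\mathbb{H}_{N,p_1,p_2}$, choose $\mathsf{n}=Np_2$, $\mathsf{M}=Np_1$, $\mathsf{N}=N$, and $\mathsf{t}=\varepsilon/p_2$, and apply Lemma~\ref{lemma:hypergeometric-tail} to obtain the bound $e^{-2\varepsilon^2 N/p_2}$. Your remark on handling the integrality of $Np_i$ by a single $O(1/N)$ adjustment is consistent with the paper's own disclaimer preceding the proposition.
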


\begin{proof}
%Let $\mathbb{H}_{N,p_1,p_2}$ be the hypergeometric random variable of the number of successes when sampling $p_2N$ times without replacement from a population of $N$ with a success ratio of $p_1$.
%The probability mass function of $\mathbb{H}_{N,p_1,p_2}$ is
%\[ H_{N,p_1,p_2}(k) = \frac{\displaystyle\binom{Np_1}{k}\binom{Nq_1}{Np_2-k}}{\displaystyle\binom{N}{Np_2}}=\prob\left( \left| A_1 \cap A_2 \right|=k \right).\]
By applying Lemma~\ref{lemma:hypergeometric-tail}(\ref{right-tail}) with $\mathsf{n}=Np_2$, $\mathsf{M}=Np_1$, $\mathsf{N}=N$, and ${\mathsf t} = \frac{\varepsilon}{p_2}$ and using $N(p_1 p_2 + \varepsilon) = \mathsf{n}({\textstyle \frac{\mathsf{M}}{\mathsf{N}}} + \mathsf{t})$,
\begin{align*}
\prob\left( \left| A_1 \cap A_2 \right| > N(p_1 p_2 + \varepsilon) \right) 
&= \prob\left( \mathbb{H}_{N,p_1,p_2} > N(p_1 p_2 + \varepsilon) \right)\\
&= \prob\left( \mathsf{H} > \mathsf{n}({\textstyle \frac{\mathsf{M}}{\mathsf{N}}} + \mathsf{t}) \right)\\
&\leq e^{-2\mathsf{t}^2\mathsf{n}}\\
&= e^{-2\varepsilon^2N/p_2}. 
\end{align*}
Similarly, by applying Lemma~\ref{lemma:hypergeometric-tail}(\ref{left-tail}) with $\mathsf{n}=Np_2$, $\mathsf{M}=Np_1$, $\mathsf{N}=N$, and ${\mathsf t} = \frac{\varepsilon}{p_2}$ and using $N(p_1 p_2 - \varepsilon) = \mathsf{n}({\textstyle \frac{\mathsf{M}}{\mathsf{N}}} - \mathsf{t})$, we prove that
\[
\prob\left( \left| A_1 \cap A_2 \right| > N(p_1 p_2 + \varepsilon) \right)  \leq e^{-2\varepsilon^2N/p_2}.
\]

For fixed $\varepsilon$ and $p_2$ the probabilities $\prob\left( \left| A_1 \cap A_2 \right| > N(p_1 p_2 + \varepsilon) \right)$ and $\prob\left( \left| A_1 \cap A_2 \right| < N(p_1 p_2 - \varepsilon) \right)$ decay exponentially in $N$. Thus, for any polynomial $f$,
\[\lim_{N \rightarrow \infty}f(N) \prob\left( \left| A_1 \cap A_2 \right| > N( p_2 + \varepsilon) \right)=0.\]
For fixed $\varepsilon$ and $p_2$, a similar result holds for the left tail.
\end{proof}

We now prove a similar result about the intersection of several subsets, each having cardinality a constant fraction of $N$. 
\begin{theorem}\label{theorem:randomintersection}
Fix $0 < p_1, \dots, p_\ell < 1$ and $\varepsilon > 0$. 
Suppose that $A_i$ is a set of cardinality $Np_i$ for $i=1,\dots,\ell$.
Then 
\[ \prob\left( \left| A_1 \cap \dots \cap A_\ell \right| > N(p_1 \dots p_\ell + \varepsilon) \right)\]
and
\[ \prob\left( \left| A_1 \cap \dots \cap A_\ell \right| < N(p_1 \dots p_\ell - \varepsilon) \right)\]
both decay exponentially in $N$, when the parameters $p_1,\dots,p_\ell$, and $\varepsilon$ are fixed. In particular, for any polynomial $f$,
\[
\lim_{N \rightarrow \infty}f(N) \prob\left( \left| A_1 \cap \dots \cap A_\ell \right| > N(p_1 \dots p_\ell + \varepsilon) \right) =0
\]
and
\[
\lim_{N \rightarrow \infty}f(N) \prob\left( \left| A_1 \cap \dots \cap A_\ell \right| < N(p_1 \dots p_\ell - \varepsilon) \right) =0.
\]
\end{theorem}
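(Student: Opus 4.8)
The plan is to induct on $\ell$. The case $\ell = 1$ is trivial (then $\left|A_1\right| = Np_1$ with certainty) and the case $\ell = 2$ is precisely Proposition~\ref{proposition:randomintersection}. For the inductive step, assume the result for $\ell-1$ sets, and write $T = A_1 \cap \cdots \cap A_{\ell-1}$, so that $A_1 \cap \cdots \cap A_\ell = T \cap A_\ell$. Since $A_\ell$ is chosen independently of $A_1, \dots, A_{\ell-1}$, conditioning on the entire set $T$ leaves $A_\ell$ a uniformly random $(Np_\ell)$-subset of $[N]$; hence, conditioned on any particular value $T = T_0$ with $\left| T_0 \right| = m$, the quantity $\left| T \cap A_\ell \right|$ is hypergeometrically distributed, arising from sampling $Np_\ell$ elements without replacement from a population of $N$ of which exactly $m$ (the elements of $T_0$) are marked. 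Its conditional mean is $mp_\ell$.

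I would first bound the upper tail. Split $\varepsilon = \tfrac{\varepsilon}{2} + \tfrac{\varepsilon}{2}$. By the inductive hypothesis, $\prob\bigl(\left| T \right| > N(p_1\cdots p_{\ell-1} + \tfrac{\varepsilon}{2})\bigr)$ decays exponentially in $N$. On the complementary event $\left| T \right| = m \leq N(p_1\cdots p_{\ell-1} + \tfrac{\varepsilon}{2})$, the conditional mean satisfies $mp_\ell \leq N(p_1\cdots p_\ell + \tfrac{\varepsilon}{2})$ because $p_\ell < 1$, so the event $\left| T \cap A_\ell \right| > N(p_1\cdots p_\ell + \varepsilon)$ forces $\left| T \cap A_\ell \right|$ to exceed its conditional mean by at least $\tfrac{\varepsilon N}{2} = \tfrac{\varepsilon}{2p_\ell}\cdot Np_\ell$. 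Lemma~\ref{lemma:hypergeometric-tail}(\ref{right-tail}), applied with $\mathsf{N} = N$, $\mathsf{M} = m$, $\mathsf{n} = Np_\ell$, and $\mathsf{t} = \tfrac{\varepsilon}{2p_\ell}$, bounds this conditional probability by $e^{-\varepsilon^2 N/(2p_\ell)}$, and crucially this bound is uniform over all admissible $m$. The law of total probability then combines the two contributions into a single bound that is still exponentially small in $N$.

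The lower tail is handled symmetrically: use the inductive hypothesis to control $\prob\bigl(\left| T \right| < N(p_1\cdots p_{\ell-1} - \tfrac{\varepsilon}{2})\bigr)$, and on the complement apply Lemma~\ref{lemma:hypergeometric-tail}(\ref{left-tail}) with the same parameters, noting that now $mp_\ell \geq N(p_1\cdots p_\ell - \tfrac{\varepsilon}{2})$. Finally, the ``in particular'' statements follow at once, since a fixed polynomial multiplied by a function decaying exponentially in $N$ tends to $0$.

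The main obstacle is minor and purely organizational: one must check that the hypergeometric tail estimate can be pulled out of the sum over values of $m$, i.e.\ that it holds uniformly for $m$ in the relevant range (it does, since $m$ enters only through $\mathsf{M}$ and the resulting exponent $\varepsilon^2 N/(2p_\ell)$ does not involve $m$), and that the two $\tfrac{\varepsilon}{2}$ error budgets add up correctly once the factor $p_\ell < 1$ is absorbed — which is exactly the slack that makes the argument close.
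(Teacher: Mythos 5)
Your proof is correct and follows essentially the same approach as the paper: induction on $\ell$, conditioning on the size of the partial intersection $T = A_1 \cap \cdots \cap A_{\ell-1}$, covering the event that $\lvert T\rvert$ is far from its expected value by the inductive hypothesis, and covering the remaining event with the hypergeometric tail bound of Lemma~\ref{lemma:hypergeometric-tail}. Your organization is in fact slightly cleaner: by observing that the conditional tail bound is \emph{uniform} over the admissible range of $m=\lvert T\rvert$ and invoking the law of total probability directly, you avoid the superfluous factor of $2N\varepsilon'$ that the paper introduces when it bounds the middle portion of its decomposition by term count rather than by $\sum_k \prob(\lvert M\rvert = k)\le 1$.
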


\begin{proof}
We prove the result on induction on $\ell \geq 2$. Let $M=A_1 \cap \dots \cap A_\ell$, 
and for the given $\varepsilon$, define $\displaystyle\varepsilon' = \varepsilon(2p_{\ell+1})^{-1}>0$.
We show that
\[ \prob\left( \left| A_1 \cap \dots \cap A_{\ell+1} \right| > N(p_1 \dots p_{\ell+1} + \varepsilon) \right)\]
decays exponentially in $N$. This probability can be decomposed as the probability of disjoint events:
\begin{equation}\label{equation:right-tail-full}
\sum_{k=0}^N \prob(|M|=k) \cdot \prob\left[ |M \cap A_{\ell+1}| > N(p_1 \dots p_{\ell+1} + \varepsilon)  \,\Big|\, |M|=k \right].
\end{equation}
Consider the terms in~\eqref{equation:right-tail-full} where $k > N(p_1 \dots p_\ell + \varepsilon')$, namely
\[
\sum_{k>N(p_1 \dots p_\ell + \varepsilon')} \prob(|M|=k) \cdot \prob\left[ |M \cap A_{\ell+1}| > N(p_1 \dots p_{\ell+1} + \varepsilon)  \,\Big|\, |M|=k \right],
\]
which is bounded above by $\prob(|M| > N(p_1 \dots p_\ell + \varepsilon'))=\prob(|A_1 \cap \dots \cap A_\ell| > N(p_1 \dots p_\ell + \varepsilon'))$, and by induction on $\ell$, this decays exponentially in $N$ as $p_1,\dots,p_\ell$, and $\varepsilon'=\varepsilon(2p_{\ell+1})^{-1}$ remain fixed. Similarly the terms in~\eqref{equation:right-tail-full} where $k < N(p_1 \dots p_\ell - \varepsilon')$, which is
\[
\sum_{k<N(p_1 \dots p_\ell - \varepsilon')} \prob(|M|=k) \cdot \prob\left[ |M \cap A_{\ell+1}| > N(p_1 \dots p_{\ell+1} + \varepsilon)  \,\Big|\, |M|=k \right]
\]
is bounded above by $\prob(|M| < N(p_1 \dots p_\ell - \varepsilon'))$, which decays exponentially in $N$. The remaining terms in~\eqref{equation:right-tail-full} are
\begin{equation}\label{equation:right-tail-middle}
\sum_{k=N(p_1 \dots p_\ell - \varepsilon')}^{N(p_1 \dots p_\ell + \varepsilon')} \prob(|M|=k) \cdot \prob\left[ |M \cap A_{\ell+1}| > N(p_1 \dots p_{\ell+1} + \varepsilon)  \,\Big|\, |M|=k \right].
\end{equation}
The probability
\[
\prob\left[ |M \cap A_{\ell+1}| > N(p_1 \dots p_{\ell+1} + \varepsilon)  \,\Big|\, |M|=k \right]
\]
is an increasing function of $k$, thus by using the maximal applicable $k$-value of $N(p_1 \dots p_{\ell} + \varepsilon')$, the quantity in~\eqref{equation:right-tail-middle} is bounded above by
\begin{equation}\label{equation:right-tail-middle-bound}
2N\epsilon' \cdot \prob\left[ |M \cap A_{\ell+1}| > N(p_1 \dots p_{\ell+1} + \varepsilon)  \,\Big|\, |M|=N(p_1 \dots p_{\ell} + \varepsilon') \right].
\end{equation}
By applying Lemma~\ref{lemma:hypergeometric-tail}(\ref{right-tail}) with $\mathsf{n}=Np_{\ell+1}$, $\mathsf{M}=|M|=N(p_1 \dots p_{\ell} + \varepsilon')$, $\mathsf{N}=N$, and ${\mathsf t} = \frac{\varepsilon-\varepsilon' p_{\ell+1}}{p_{\ell+1}} > 0$ and using $N(p_1 \dots p_{\ell+1} + \varepsilon) = \mathsf{n}({\textstyle \frac{\mathsf{M}}{\mathsf{N}}} + \mathsf{t})$, the expression in~\eqref{equation:right-tail-middle-bound} is bounded above by
\[
2N\epsilon' \cdot \exp\left\{-2 \left(\frac{\varepsilon-\varepsilon' p_{\ell+1}}{p_{\ell+1}}\right)^2 \cdot Np_{\ell+1}\right\},
\]
which decays exponentially in $N$ when all other parameters are fixed.

Now we show that
\[ \prob\left( \left| A_1 \cap \dots \cap A_{\ell+1} \right| < N(p_1 \dots p_{\ell+1} - \varepsilon) \right)\]
decays exponentially in $N$, pointing out the differences to the previous proof. This probability can be decomposed as
\begin{equation}\label{equation:left-tail-full}
\sum_{k=0}^N \prob(|M|=k) \cdot \prob\left[ |M \cap A_{\ell+1}| < N(p_1 \dots p_{\ell+1} - \varepsilon)  \,\Big|\, |M|=k \right].
\end{equation}
As before, the sum of the terms in~\eqref{equation:left-tail-full} where $k > N(p_1 \dots p_\ell + \varepsilon')$ or where $k < N(p_1 \dots p_\ell - \varepsilon')$ both decay exponentially in $N$. The remaining terms in~\eqref{equation:left-tail-full} are
\begin{equation}\label{equation:left-tail-middle}
\sum_{k=N(p_1 \dots p_\ell - \varepsilon')}^{N(p_1 \dots p_\ell + \varepsilon')} \prob(|M|=k) \cdot \prob\left[ |M \cap A_{\ell+1}| < N(p_1 \dots p_{\ell+1} - \varepsilon)  \,\Big|\, |M|=k \right].
\end{equation}
which is bounded above by
\begin{equation}\label{equation:left-tail-middle-bound}
2N\epsilon' \cdot \prob\left[ |M \cap A_{\ell+1}| < N(p_1 \dots p_{\ell+1} - \varepsilon)  \,\Big|\, |M|=N(p_1 \dots p_{\ell} - \varepsilon') \right],
\end{equation}
since the probability that $|M \cap A_{\ell+1}| < N(p_1 \dots p_{\ell+1} - \varepsilon)$ increases as $k=|M|$ decreases.

By applying Lemma~\ref{lemma:hypergeometric-tail}(\ref{left-tail}) with $\mathsf{n}=Np_{\ell+1}$, $\mathsf{M}=|M|=N(p_1 \dots p_{\ell} - \varepsilon')$, $\mathsf{N}=N$, and ${\mathsf t} = \frac{\varepsilon-\varepsilon' p_{\ell+1}}{p_{\ell+1}} > 0$ and using $N(p_1 \dots p_{\ell+1} - \varepsilon) = \mathsf{n}({\textstyle \frac{\mathsf{M}}{\mathsf{N}}} - \mathsf{t})$, the expression in~\eqref{equation:left-tail-middle-bound} is bounded above by
\[
2N\epsilon' \cdot \exp\left\{-2 \left(\frac{\varepsilon-\varepsilon' p_{\ell+1}}{p_{\ell+1}}\right)^2 \cdot Np_{\ell+1}\right\},
\]
which decays exponentially in $N$ as well.
\end{proof}

We now list the specific consequences that we exploited in the previous sections. 

The first result is quite weak and easy to prove directly. Let $P_1, \dots, P_{\delta-1}$ be the sets defined in Section~\ref{section:Eisenbrand1}. Each of these sets is a subset of $[n]$, so we take $N:=n=2d$.   
 
\begin{proposition} \label{proposition:P-intersection}  
For $j \neq j'$, the sets $P_j$ and $P_{j'}$ intersect in at most $d-4$ elements. 
\end{proposition}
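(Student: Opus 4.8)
The plan is to establish this by a union bound over all pairs, feeding each pair into the hypergeometric tail estimate of the appendix. The crucial observation is that the bound we are aiming for, $|P_j \cap P_{j'}| \leq d-4$, sits far above the typical value: for two random $(d-1)$-subsets of $[n]=[2d]$, the expected size of the intersection is $(d-1)^2/(2d)$, which is asymptotically $d/2$. So we are demanding that no pair deviate above its mean by roughly a constant fraction of $d$, and each such event has probability exponentially small in $n$.

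Concretely, I would first condition on $P_j$ and note that $|P_j \cap P_{j'}|$ is then a hypergeometric random variable $\mathsf H$ with population size $\mathsf N = n = 2d$, $\mathsf M = |P_j| = d-1$ successes, and $\mathsf n = |P_{j'}| = d-1$ samples. Writing $d-3 = \bigl(\tfrac{\mathsf M}{\mathsf N} + \mathsf t\bigr)\mathsf n$, one computes
\[
\mathsf t \;=\; \frac{d-3}{d-1} - \frac{d-1}{2d} \;\xrightarrow[d\to\infty]{}\; 1 - \tfrac12 \;=\; \tfrac12 \;>\; 0,
\]
so $\mathsf t \geq \tfrac13$ once $d$ is large, and Lemma~\ref{lemma:hypergeometric-tail}(\ref{right-tail}) gives
\[
\prob\bigl(|P_j \cap P_{j'}| > d-4\bigr) \;=\; \prob\bigl(\mathsf H \geq (\tfrac{\mathsf M}{\mathsf N} + \mathsf t)\mathsf n\bigr) \;\leq\; e^{-2\mathsf t^2 \mathsf n} \;\leq\; e^{-\frac{2}{9}(d-1)},
\]
which decays exponentially in $n$. (One could equivalently invoke Proposition~\ref{proposition:randomintersection} with $p_1 = p_2 = (d-1)/(2d)$ and, say, $\varepsilon = \tfrac18$, since then $N(p_1p_2+\varepsilon) \to \tfrac34 d < d-4$.)

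Next I would apply the union bound. The family $\Wn$, and hence the index range of the $P_i$, has at most as many layers as the first Eisenbrand et al.\ construction $\Vn$, whose number of layers is $O(n/\log n)$; in particular there are fewer than $\binom{n}{2}$ unordered pairs $\{j,j'\}$. Summing the estimate above over all pairs shows that if $P_1,\dots,P_{\delta-1}$ are drawn independently and uniformly, the probability that they fail to be pairwise distinct or that some pair has intersection exceeding $d-4$ is at most
\[
\binom{n}{2}\Bigl(e^{-\frac{2}{9}(d-1)} + \binom{2d}{d-1}^{-1}\Bigr),
\]
which tends to $0$ as $d \to \infty$. Hence for $d$ large there exists a choice of distinct $(d-1)$-subsets with all pairwise intersections of size at most $d-4$, and this is the choice we stipulate in the construction.

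I do not expect a genuine obstacle here: this is exactly the kind of weak separation property the appendix was built to supply. The only mild wrinkle is that the construction draws the $P_i$ ``without replacement'' rather than independently, which is absorbed either into the exponentially small coincidence term above or, equivalently, by observing that conditioning each $P_{j'}$ on being distinct from the previously chosen sets inflates each conditional probability by a factor of at most $\bigl(1 - \binom{2d}{d-1}^{-1}\bigr)^{-1} = 1 + o(1)$.
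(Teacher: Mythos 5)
Your proposal is correct and follows essentially the same route as the paper: a union bound over all $O(\delta^2)$ pairs combined with an exponential tail estimate on the hypergeometric distribution of $\left| P_j \cap P_{j'} \right|$. The only minor differences are that you invoke Lemma~\ref{lemma:hypergeometric-tail} directly rather than the packaged Proposition~\ref{proposition:randomintersection} (which the paper applies with $p_1=p_2=\tfrac12$, $\varepsilon=\tfrac18$, and $f(N)=N^4$), and that you explicitly absorb the without-replacement stipulation into the union bound, a point the paper's proof leaves implicit.
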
 

\begin{proof}
The probability that the desired event does not happen is at most the sum of the $\binom{\delta}{2}$ individual probabilities that $\left| P_j \cap P_{j'} \right| \geq d-4$. Each set is of size $d = \frac{1}{2}N$. Also $\delta < N$ and so $\binom{\delta}{2} < N^4$. By applying Proposition~\ref{proposition:randomintersection} with $p_1 = p_2 = \frac{1}{2}$, $\varepsilon = \frac{1}{8}$, and $f(N) = N^4$, we get with probability approaching one that for all choices of $j, j'$, 
\[\left| P_j \cap P_{j'} \right| 
< \left( \left( \frac{1}{2} \right)^2 + \frac{1}{8} \right) N
= \frac{3}{8} N 
< \frac{1}{2} N - 4 
= d - 4.\]
\end{proof}

Of course the same argument will work if $d$ is any positive fraction of $N$ and we replace $d-4$ by $d-k$ for any constant $k$. 

We now consider the sets $P_{j,k}$ and $Q_{j,k}$ defined in Section~\ref{section:Eisenbrand2}. Since $P_{j,k} \subseteq \mathbf{A}$, $Q_{j,k} \subseteq \mathbf{B}$, and $\left| \mathbf{A} \right| = \left| \mathbf{B} \right| = \frac{n}{2}$, we take $N:=\frac{n}{2}=2d$. 

\begin{lemma}\label{lemma:Pjk-union}
With probability tending to one, the union of each four of the sets $P_{j,k}$ contains at least $\frac{13d}{16}$ elements.
With probability tending to one, the union of each four of the sets $Q_{j,k}$ contains at least $\frac{13d}{16}$ elements.
\end{lemma}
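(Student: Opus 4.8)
The plan is to reduce this to Theorem~\ref{theorem:randomintersection} applied to the complements of the sets inside $\mathbf{A}$ (resp. $\mathbf{B}$). Consider four distinct index pairs and write $P_1,\dots,P_4$ for the corresponding random sets $P_{j,k}$, which are chosen independently and uniformly, and set $N := |\mathbf{A}| = \tfrac n2 = 2d$. Then, writing $\overline{P_i} := \mathbf{A}\setminus P_i$,
\[
|P_1 \cup P_2 \cup P_3 \cup P_4| \;=\; N - |\overline{P_1} \cap \overline{P_2} \cap \overline{P_3} \cap \overline{P_4}|,
\]
so it suffices to show that, with probability tending to one, every fourfold intersection of these complements has size at most $N - \tfrac{13d}{16} = \tfrac{19d}{16} = \tfrac{19}{32}N$.

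The one subtlety is that $|P_i|$ (and hence $|\overline{P_i}|$) is not a fixed fraction of $N$, whereas Theorem~\ref{theorem:randomintersection} is stated for sets whose cardinalities are fixed fractions of $N$. I would circumvent this by enlarging the complements. Fix a small constant $\eta > 0$. Since each relevant index satisfies $j_i \le \lfloor 3d/4\rfloor$, we have $|\overline{P_i}| = d + j_i + 1 \le \tfrac{7d}{4}+1 < \lceil(\tfrac78+\eta)N\rceil =: s$ for $d$ large, so using fresh independent randomness I can enlarge each $\overline{P_i}$ to a uniformly random $s$-subset $\overline{P_i}' \supseteq \overline{P_i}$ of $\mathbf{A}$. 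The sets $\overline{P_1}',\dots,\overline{P_4}'$ are then mutually independent, each of the fixed cardinality $s = (\tfrac78+\eta)N$ up to the floor/ceiling convention of the appendix, and $|\overline{P_1}\cap\dots\cap\overline{P_4}| \le |\overline{P_1}'\cap\dots\cap\overline{P_4}'|$. Now pick a further constant $\varepsilon > 0$ small enough that $(\tfrac78+\eta)^4 + \varepsilon < \tfrac{19}{32}$; this is possible because $(\tfrac78)^4 = \tfrac{2401}{4096} < \tfrac{2432}{4096} = \tfrac{19}{32}$, although the margin is small, so $\eta$ and $\varepsilon$ must be taken small. Theorem~\ref{theorem:randomintersection}, with $\ell = 4$ and all $p_i = \tfrac78+\eta$, then shows that the probability $|\overline{P_1}'\cap\dots\cap\overline{P_4}'| > N\big((\tfrac78+\eta)^4+\varepsilon\big)$ decays exponentially in $N$; off this event $|\overline{P_1}\cap\dots\cap\overline{P_4}| < \tfrac{19}{32}N = \tfrac{19d}{16}$, hence $|P_1\cup\dots\cup P_4| > \tfrac{13d}{16}$.

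Finally I would union-bound over all quadruples of index pairs. The number of pairs $(j,k)$ equals the number of layers of $\Wn$, which is at most $\delta(\Vn) + 1$ and hence polynomial in $n$ (equivalently in $N$); so the number of quadruples is at most some polynomial $f(N)$. Multiplying the per-quadruple exponential bound by $f(N)$ and invoking the ``in particular'' clause of Theorem~\ref{theorem:randomintersection}, the probability that some four of the $P_{j,k}$ have union smaller than $\tfrac{13d}{16}$ tends to $0$. The claim for the $Q_{j,k}$ follows by the identical argument with $\mathbf{A}$ replaced by $\mathbf{B}$, using that $|Q_{j,k}| = j \ge \lceil d/4\rceil$, so that $|\overline{Q_{j,k}}| \le \tfrac{7d}{4}$ and each $Q_{j,k}$ again occupies at least a $\tfrac18$-fraction of $|\mathbf{B}| = 2d$. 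I expect the main obstacle to be precisely the tightness of the numerical inequality $(\tfrac78)^4 N < \tfrac{19}{32}N$ used above — there is only a constant-fraction slack — together with the bookkeeping that converts the varying set sizes into the fixed fractions required by Theorem~\ref{theorem:randomintersection}.
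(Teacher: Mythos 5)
Your proof is correct and follows essentially the same route as the paper: both reduce the union bound on four of the $P_{j,k}$ (or $Q_{j,k}$) to an intersection bound on complementary random sets of a fixed cardinality near $\tfrac{7}{8}N$, invoke Theorem~\ref{theorem:randomintersection} with the same numerical margin $\bigl(\tfrac{7}{8}\bigr)^4 = \tfrac{2401}{4096} < \tfrac{2432}{4096} = \tfrac{19}{32}$, and union-bound over a polynomial number of quadruples. The only cosmetic difference is that the paper first shrinks each $Q_{j,k}$ to a random $\tfrac{d}{4}$-subset and then complements, obtaining sets of cardinality exactly $\tfrac{7N}{8}$, while you complement first and then enlarge to a slightly larger fixed cardinality $\bigl(\tfrac{7}{8}+\eta\bigr)N$; both devices handle the fact that the $P_{j,k}$ have varying sizes.
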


\begin{proof}
We prove the result for the $Q_{j,k}$ sets only because the indexing matches the cardinality. By symmetry, the same result holds for the random $(d-j-1)$-subsets $P_{j,k}$ and the random $j$-subsets $Q_{j,k}$. 

The total number of sets $Q_{j,k}$ is the diameter $\delta$ of the entire family, which is less than $N^2$. The probability that the desired event does not happen is at most the sum of the $\binom{\delta}{4} < N^8$ individual probabilities that $\left| Q_{j_1, k_1} \cup \dots \cup Q_{j_4. k_4} \right| \geq \frac{13d}{16} = \frac{13}{32}N$ does not occur. For $i = 1,2,3,4$, let $Q_i'$ be a random $\frac{d}{4}$-subset of $Q_{j_i, k_i}$ and let $A_i = (Q_i')^c \subseteq \mathbf{B}$. Then $A_i$ is a random set of size $N - \frac{d}{4} = \frac{7N}{8}$. The expected size of $A_1 \cap A_2 \cap A_3 \cap A_4$ is $\left( \frac{7}{8} \right)^4 N = \frac{2401}{4096}N$. So by applying Theorem~\ref{theorem:randomintersection} with $m=4$, $p_1=p_2=p_3=p_4 = \frac{7}{8}$, $\varepsilon = \frac{31}{4096}$, and $f(N) = N^4$, we get that with probability approaching one, for all choices of $(j_1, k_1), \dots, (j_4, k_4)$ we have
\begin{align*}
\left| Q_{j_1, k_1} \cup \dots \cup Q_{j_4,k_4} \right| 
& \geq \left| Q_1' \cup Q_2' \cup Q_3' \cup Q_4'  \right| \\
& = \left| A_1^c \cup A_2^c \cup A_3^c \cup A_4^c \right| \\
& = N - \left| A_1 \cap A_2 \cap A_3 \cap A_4 \right| \\
& > N - \left( \frac{2401}{4096} + \frac{31}{4096} \right) N \\
& = \frac{13}{32}N,
\end{align*}
which equals $\frac{13d}{16}$.
\end{proof}

\begin{lemma}\label{lemma:Pjk-diff}
For $(j,k) \neq (j',k')$, we have $\left| P_{j,k} \setminus P_{j',k'} \right| \geq 5$.
For $(j,k) \neq (j',k')$, we have $\left| Q_{j,k} \setminus Q_{j',k'} \right| \geq 5$.
\end{lemma}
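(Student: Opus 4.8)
The plan is to handle both claims with a single union-bound argument in the spirit of the proof of Proposition~\ref{proposition:P-intersection}: control the number of relevant index pairs by a polynomial in $N$, and show that for each fixed ordered pair the probability of failure is exponentially small in $d$. I will argue for the sets $P_{j,k}$; the argument for the $Q_{j,k}$ is identical after interchanging $\mathbf{A}$ and $\mathbf{B}$ and replacing $|P_{j,k}|=d-j-1$ by $|Q_{j,k}|=j$. In both cases the relevant cardinality lies between roughly $d/4$ and $3d/4$, because the interpolation sets (and hence the $P_{j,k}$ and $Q_{j,k}$) are only defined for $\overleftarrow{\jmath}\le j\le\overrightarrow{\jmath}$ with $\epsilon=\tfrac14$.

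Fix an ordered pair $(j,k)\neq(j',k')$ and recall $N:=|\mathbf{A}|=\tfrac n2=2d$. Since the $P_{j,k}$ are chosen independently and uniformly at random, we may condition on $P_{j',k'}$; then $P_{j,k}$ is a uniformly random $(d-j-1)$-subset of $\mathbf{A}$, so $|P_{j,k}\cap P_{j',k'}|$ is a hypergeometric random variable $\mathsf{H}$ with population size $\mathsf{N}=2d$, sample size $\mathsf{n}=|P_{j,k}|=d-j-1$, and number of successes $\mathsf{M}=|P_{j',k'}|=d-j'-1$. Since $j'\ge\overleftarrow{\jmath}=\lceil d/4\rceil$ we have $\mathsf{M}/\mathsf{N}=(d-j'-1)/(2d)<\tfrac38$, and since $j\le\overrightarrow{\jmath}=\lfloor 3d/4\rfloor$ we have $\mathsf{n}=d-j-1\ge d/4-1$, which is at least $32$ once $d$ is large. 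The failure event $|P_{j,k}\setminus P_{j',k'}|\le 4$ is exactly $\{\mathsf{H}\ge|P_{j,k}|-4\}$, and because $|P_{j,k}|-4\ge\tfrac78|P_{j,k}|>(\tfrac{\mathsf{M}}{\mathsf{N}}+\tfrac12)\mathsf{n}$, Lemma~\ref{lemma:hypergeometric-tail}(\ref{right-tail}) applied with $\mathsf{t}=\tfrac12$ yields
\[
\prob\big(|P_{j,k}\setminus P_{j',k'}|\le 4\big)\;\le\;\prob\big(\mathsf{H}\ge(\tfrac{\mathsf{M}}{\mathsf{N}}+\tfrac12)\mathsf{n}\big)\;\le\;e^{-\mathsf{n}/2}\;\le\;e^{-(d/4-1)/2}.
\]

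To conclude I would sum over all ordered pairs. There is one set $P_{j,k}$ per layer of $\Zn$ to which an interpolation set is added, so the number of such sets is at most the diameter $\delta$ of $\Zn$, which is less than $N^2$ (exactly the bound used in the proof of Lemma~\ref{lemma:Pjk-union}). Hence there are fewer than $\delta^2<N^4$ ordered pairs, and the probability that some pair fails is at most $N^4e^{-(d/4-1)/2}\to 0$ as $d\to\infty$. Thus, with probability tending to one, every pair satisfies $|P_{j,k}\setminus P_{j',k'}|\ge 5$, and the identical computation with $\mathbf{B}$ in place of $\mathbf{A}$ gives $|Q_{j,k}\setminus Q_{j',k'}|\ge 5$.

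I do not expect a substantive obstacle. The one point needing care is that the cardinalities $|P_{j,k}|=d-j-1$ and $|P_{j',k'}|=d-j'-1$ are not fixed fractions of $N$ but vary with $j$ and $j'$, so Proposition~\ref{proposition:randomintersection}, stated for fixed $p_1,p_2$, does not apply verbatim. Appealing directly to the raw hypergeometric tail estimate of Lemma~\ref{lemma:hypergeometric-tail} sidesteps this, since it only requires the uniform inequality $\mathsf{M}/\mathsf{N}<\tfrac38$, which is precisely what the range restriction $\overleftarrow{\jmath}\le j'\le\overrightarrow{\jmath}$ guarantees. Alternatively one could first pass from $P_{j,k}$ to a uniformly random subset of it of size $\lceil d/4\rceil$ and from $P_{j',k'}$ to a uniformly random superset of it of size $\lfloor 3d/4\rfloor$ — both then uniformly random subsets of $\mathbf{A}$ of a fixed fractional size — and apply Proposition~\ref{proposition:randomintersection}, mirroring the subset trick in the proof of Lemma~\ref{lemma:Pjk-union}.
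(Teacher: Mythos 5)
Your argument is correct, and it takes a genuinely different route from the paper's. The paper proceeds by the same ``sandwiching'' trick it used for Lemma~\ref{lemma:Pjk-union}: replace $Q_{j,k}$ by a random $\frac{1}{8}N$-subset $A_1$ and $Q_{j',k'}$ by a random $\frac{3}{8}N$-superset $A_2$, observe that $|Q_{j,k}\setminus Q_{j',k'}|\geq|A_1\setminus A_2| = |A_1|-|A_1\cap A_2|$, and invoke Proposition~\ref{proposition:randomintersection} with the now-fixed parameters $p_1=\tfrac18$, $p_2=\tfrac38$, $\varepsilon=\tfrac1{64}$; this yields the much stronger conclusion $|Q_{j,k}\setminus Q_{j',k'}|\geq\tfrac{1}{16}N$, far beyond the required $5$. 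You instead apply Lemma~\ref{lemma:hypergeometric-tail}(1) directly to the actual (section-dependent) cardinalities, using the uniform bound $\mathsf{M}/\mathsf{N}<\tfrac38$ coming from $j'\geq\overleftarrow{\jmath}$ and $\mathsf{n}\geq d/4-1$ coming from $j\leq\overrightarrow{\jmath}$. Your route is more elementary in that it bypasses the auxiliary random sets and the packaging of Proposition~\ref{proposition:randomintersection} — you correctly diagnose that Proposition~\ref{proposition:randomintersection} requires fixed fractions $p_1,p_2$ and so cannot be applied verbatim here, and you repair this by going straight to the raw Chv\'atal--Hoeffding bound rather than by normalizing the set sizes as the paper does. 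Both routes give an exponentially-in-$d$ small failure probability for each pair, and the union bound over at most $\delta^2<N^4$ pairs is identical. The one cosmetic point to watch: the paper's convention is to present the $Q_{j,k}$ case (whose cardinality is exactly $j$) and deduce the $P_{j,k}$ case by symmetry, while you do the reverse, which is equally valid since both cardinalities lie in $[d/4-1,3d/4]$ for $\overleftarrow{\jmath}\leq j\leq\overrightarrow{\jmath}$.
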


\begin{proof}
As before, we prove the result for the $Q_{j,k}$ sets only. The same result holds by symmetry for the sets $P_{j,k}$. The number of pairs $(j,k), (j',k')$ is at most $\delta^2 < N^4$. The minimum size of a given $Q_{j,k}$ is $\frac{d}{4} = \frac{N}{8}$ and the maximum size of $Q_{j',k'}$ is $\frac{3d}{4} = \frac{3}{8}N$. For a given pair, let $A_1$ be a random $\frac{1}{8}N$-subset of $Q_{j,k}$ and
$A_2$ be a random $\frac{3}{8}N$-superset of $Q_{j',k'}$. The expected size of $A_1 \cap A_2$ is $\frac{3}{64}N$. By applying Proposition~\ref{proposition:randomintersection} with $p_1 = \frac{1}{8}$, $p_2 = \frac{3}{8}$, $\varepsilon = \frac{1}{64}$, and $f(N) = N^4$,
we get with probability approaching 1 that for all choices of $(j,k)\not=(j',k')$,
\begin{align*} 
\left| Q_{j,k} \setminus Q_{j',k'} \right| 
& \geq \left| A_1 \setminus A_2 \right| \\ 
& = \left|  A_1 \right|  - \left| A_1 \cap A_2 \right| \\
& \geq \frac{1}{8}N - \left( \frac{3}{64} + \frac{1}{64} \right) N \\
& = \frac{1}{16} N,
\end{align*}
which is more than 5 (or any other constant) for $N$ sufficiently large. 
\end{proof}

The events in Lemmas~\ref{lemma:Pjk-union} and~\ref{lemma:Pjk-diff} hold simultaneously using the union bound, since the probability of each approaches $1$.

\section*{Acknowledgements}

The authors are grateful for the helpful comments from referees.

%%%%%%%%%%%%%%%%%%%%%%%%%%%%%%%%%%%%%%%%%%%%%%%%%%%%%%%%%%%%

\end{document}